\newtheorem{theorem}{Theorem}
\newtheorem{proposition}{Proposition}
\newtheorem{lemma}{Lemma}
\newtheorem{rem}{Remark}
\title{
\textbf{Ray-Knight Theorems for Spectrally Negative L\'evy Processes}}
\author{\textbf{ Jes\'us Contreras\footnote{Centro de Investigaci\'on en Matem\'aticas A.C. Calle Jalisco s/n. C.P. 36240, {\sc Guanajuato, Mexico.}  Email: jjcontreras@cimat.mx.}\,\,,  V{\'i}ctor Rivero\footnote{Centro de Investigaci\'on en Matem\'aticas A.C. Calle Jalisco s/n. C.P. 36240, {\sc Guanajuato, Mexico.}  Email: rivero@cimat.mx.}\, .}}
\date{\footnotesize This version: \today}
\begin{document}
\maketitle

\begin{abstract}
\bigskip

In this paper, we study the law of the local time processes $(L_T^x(X),x\in \mathbb{R})$ associated to a spectrally negative L\'evy process $X$, in the cases $T=\tau_a^+$, the first passage time of $X$ above $a>0$ and $T=\tau(c)$, the first time it accumulates $c$ units of local time at zero. We describe the branching structure of local times and Poissonian constructions of them using excursion theory. The presence of jumps for $X$ creates a type of excursions which can contribute simultaneously to local times of levels above and below a given reference point. This fact introduces dependency on local times, causing them to be non-Markovian. Nonetheless, the overshoots and undershoots of excursions will be useful to analyze this dependency. In both cases, local times are infinitely divisible and we give a description of the corresponding L\'evy measures in terms of excursion measures. These are hence analogues in the spectrally negative L\'evy case of the first and second Ray-Knight theorems, originally stated for the Brownian motion.
\bigskip

\noindent {\sc Key words}:  Local times, L\'evy processes, Fluctuation theory, Ray-Knight theorem, Infinite divisibility \\
\noindent MSC 2010 subject classifications: 60J99, 60G51.
\end{abstract}

\vspace{0.5cm}

\section{Introduction} \label{intro4}
Let $X=(X_t,t\geq 0)$ be a real valued spectrally negative L\'evy process, that is, a stochastic process with independent and stationary increments and no positive jumps. Denote by $S=(S_t,t\geq 0)$ its running supremum. Its Laplace transform exists, characterizes its law and can be expressed as
$$\mathbb{E}\left[e^{\lambda X_t} \right]=e^{t\Psi(\lambda)},\quad t,\lambda \geq 0,$$ where the function $\Psi$ is called the Laplace exponent of $X$. $\Psi$ can be expressed by the L\'evy-Khintchine formula $$\Psi(\lambda)=d\lambda+\frac{1}{2}\Sigma^2\lambda^2+\int_{(-\infty,0)}(e^{\lambda x}-1-\lambda x1_{\{-1<x\}})\Pi(dx),$$ where the triplet $(d,\Sigma,\Pi)$ consists of $d,\Sigma \in \mathbb{R}$ and a $\sigma-$finite measure $\Pi$ over $(-\infty,0)$ satisfying the condition $\int_{(-\infty,0)} (1\wedge x^2) \Pi(dx) <\infty$. 

From now on, we consider $X$ to be a SNLP such that:
\begin{itemize}
\item[\textbf{(A)}] it is of unbounded variation (which is equivalent to $\Sigma\neq 0$ or $\int_{-1}^0 |x| \Pi(dx)=\infty$),
\end{itemize}
and either
\begin{itemize}
\item[\textbf{(B1)}] $X_t \to \infty$ a.s. as $t\to \infty$ (which is equivalent to the condition $\Psi'(0+)>0$);
\end{itemize}
 or
\begin{itemize}
\item[\textbf{(B2)}] $X$ oscillates as $t\to \infty$ (which is equivalent to the condition $\Psi'(0+)=0$).
\end{itemize}
One can actually remove these conditions, but we will not tackle this task here. For deeper insight in the theory of L\'evy processes, we cite \cite{bertoin1996levy}, \cite{kyprianou2014fluctuations} and \cite{ken1999levy} as our standard references.

We are interested in studying the local time process associated to $X$, which is here denoted by $\left(L_t^x(X),t\geq 0, x\in \mathbb{R} \right)$. Essentially, one interprets $L_t^x(X)$ as the amount of time $X$ stays at level $x$ on the interval $[0,t]$. Formally, $L_t^x(X)$ is defined as the a.s. limit
\begin{equation}\label{ltdef}
L_t^x(X) := \limsup_{\varepsilon \to 0^+} \frac1{2\varepsilon} \int_0^t 1_{\{|X_s-x|<\varepsilon \}}ds, \ \ \ x\in \mathbb{R}, t\geq 0.
\end{equation}
Under rather general conditions (see \cite[Ch.V]{bertoin1996levy}), the convergence holds also in $L^2$ and uniformly over compact sets of $t$. Furthermore, local times satisfy the so-called occupation density formula, that is, $$\int_0^T f(X_s)ds\stackrel{a.s.}{=}\int_\mathbb{R} f(x)L_T^x (X) dx,$$ for any $f\geq 0$ measurable and bounded and stopping times $T$.

Our aim is to describe the local time process $(L_T^x(X),x\in \mathbb{R})$, where $T$ is a fixed stopping time. In general, local times indexed by the spatial variable are not easy to describe. Nonetheless, this has been a matter of research interest since early times of the theory of stochastic processes, perhaps originated by the pioneering works of Ray and Knight in the decade of 1960. In particular, the theory around the so-called isomorphism theorems, has proven to be one powerful tool to study properties of local times. 

Ray and Knight completely characterized the law of the local times in the case $X$ is a standard Brownian motion and $T$ is either the first time $X$ is above a positive level $a$ or the first time it accumulates a certain amount of local time at zero. These results are known as the first and second Ray-Knight theorems, respectively, and they are expressed in terms of squared Bessel processes as follows.

\begin{theorem}[First Ray-Knight theorem]\label{RK1}
Let $X$ be a Brownian motion issued from zero, $a>0$ a fixed level and $$\tau_a^+=\inf\{t>0: X_t> a\}$$ the first passage time above $a$. Then,
\begin{itemize}
\item[i)] the process $(L^{a-z}_{\tau_a^+}(X), z \in [0,a])$ has the same law as a squared Bessel process of dimension 2 started from 0;
\item[ii)] the process $(L^{a-z}_{\tau_a^+}(X), z\geq a)$ has the same law as a squared Bessel process of dimension 0, issued from $L^{0}_{\tau_a^+}(X).$ 
\end{itemize}
Moreover, conditionally on $L^0_{\tau_a^+}(X)$ the two parts are independent.
\end{theorem}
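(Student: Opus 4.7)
The plan is to combine Tanaka's formula with a quadratic-variation computation based on the occupation density formula, and then to recognise $(L^{a-z}_{\tau_a^+}(X), z\ge 0)$ as the unique solution of the SDE defining a squared Bessel process.

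\textbf{Step 1 (Tanaka decomposition).} I would first apply Tanaka's formula to $(X_\cdot - x)^+$ between times $0$ and $\tau_a^+$. Since $X_0 = 0$ and $X_{\tau_a^+} = a$, this gives
\[
L^x_{\tau_a^+}(X) \;=\; 2\bigl[(a-x)^+ - (-x)^+\bigr] \;-\; 2\int_0^{\tau_a^+} \mathbf{1}_{\{X_s > x\}}\, dX_s,
\]
the factor $2$ coming from the normalisation of \eqref{ltdef} relative to the semimartingale local time. Setting $x = a - z$ and $M_z := \int_0^{\tau_a^+}\mathbf{1}_{\{X_s > a-z\}}\, dX_s$, this reads
\[
Z_z \;:=\; L^{a-z}_{\tau_a^+}(X) \;=\; 2\min(z,a) - 2M_z, \qquad Z_0 = 0.
\]

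\textbf{Step 2 (Quadratic variation and SDE identification).} Next I would argue that the field $(M_z)_{z \geq 0}$ is a continuous martingale with respect to a suitable spatial filtration (morally generated by the portions of the path of $X$ lying above the moving level $a - z$). It\^o's isometry combined with the occupation density formula gives, for $z' > z$,
\[
\langle M\rangle_{z'} - \langle M\rangle_z \;=\; \int_0^{\tau_a^+}\mathbf{1}_{\{a-z' < X_s \leq a-z\}}\, ds \;=\; \int_{a-z'}^{a-z} L^y_{\tau_a^+}(X)\, dy \;=\; \int_z^{z'} Z_u\, du,
\]
hence $\langle M\rangle_z = \int_0^z Z_u\, du$. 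The Dambis--Dubins--Schwarz theorem then provides a standard Brownian motion $\beta$ such that $M_z = \int_0^z \sqrt{Z_u}\, d\beta_u$, and substituting back yields
\[
dZ_z \;=\; 2\,\mathbf{1}_{\{z < a\}}\, dz - 2\sqrt{Z_z}\, d\beta_z, \qquad Z_0 = 0.
\]
On $[0,a]$ this is precisely the SDE of a squared Bessel process of dimension $2$ issued from $0$, and on $[a,\infty)$ it is the SDE of a squared Bessel process of dimension $0$ issued from $Z_a = L^0_{\tau_a^+}(X)$. Pathwise uniqueness (Yamada--Watanabe) for these SDEs then identifies the laws in parts (i) and (ii), and the conditional independence of the two pieces given $L^0_{\tau_a^+}(X)$ follows from the Markov property of the solution at the splitting value $z = a$.

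\textbf{Main obstacle.} The delicate point in the above scheme is Step 2: rigorously justifying that $(M_z)_{z \geq 0}$ is a continuous martingale with respect to a suitably defined spatial filtration. This requires identifying the filtration explicitly (for instance as the $\sigma$-algebra generated by the excursions of $X$ above the levels $a - z'$ for $z' \leq z$) and checking the martingale property of the increments $M_{z'} - M_z$ with respect to it, which is non-trivial because the integrand $\mathbf{1}_{\{X_s > a-z\}}$ depends on the spatial parameter $z$ and the usual time-filtration of $X$ is not the natural one for this construction. Once this point is settled, the occupation density identity, Dambis--Dubins--Schwarz, and uniqueness for squared Bessel SDEs make the rest of the argument routine.
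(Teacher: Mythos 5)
The paper never proves Theorem~\ref{RK1}: it states it as a classical fact, defers the details to \cite{marcus2006markov} and \cite{mansuy2008aspects}, and in the body only establishes (Theorem~\ref{PRtauA}) the Poissonian/excursion-theoretic representation \eqref{PRbrow} of $(L^{a-z}_{\tau_a^+},z\geq 0)$, which still does not by itself identify the law as a squared Bessel process. So the paper's implicit route is It\^o excursion theory (Poisson point process of excursions away from the supremum and the exponential formula), whereas you take the stochastic-calculus route: Tanaka's formula, a martingale in the spatial variable, Dambis--Dubins--Schwarz, and strong uniqueness for the BESQ SDE. That is a genuinely different, and equally classical, approach (in the spirit of Jeulin--Yor/Walsh). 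It buys a short and conceptually appealing derivation once the key martingale fact is in hand, at the cost of having to establish that fact, whereas the excursion route makes the branching/Poissonian structure explicit from the outset and is the one that generalizes to the L\'evy setting in the rest of the paper.

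Your Step~1 is correct; the only quibble is the stated source of the factor $2$ --- for Brownian motion the paper's normalization \eqref{ltdef} coincides with the Revuz--Yor semimartingale local time, so the $2$ is simply the reciprocal of the $\tfrac12$ in Tanaka's formula rather than a discrepancy between two conventions (though in Karatzas--Shreve normalization it really is a normalization mismatch; either reading yields the correct identity). The genuine gap is the one you honestly flag in Step~2, and it is not a small technicality: proving that $(M_z)_{z\geq 0}$ is a continuous martingale in the spatial filtration $\mathcal{G}_z$ generated by the path of $X$ above level $a-z$ up to $\tau_a^+$ requires showing that, conditionally on $\mathcal{G}_z$, the portion of the path below $a-z$ splits into an initial Brownian segment from $0$ to the first hit of $a-z$ (when $z<a$) together with a Poissonian family of Brownian excursions below $a-z$ indexed by the local time $L^{a-z}_{\tau_a^+}$, and then checking that the stochastic integral $\int \mathbf{1}_{\{a-z'<X_s\leq a-z\}}\,dX_s$ over each such piece has vanishing conditional mean. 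This is essentially the excursion decomposition you were hoping to bypass; without it the argument is a correct outline, not a proof. Two smaller points worth tightening: (a) DDS gives $M_z=B(\langle M\rangle_z)$ for a Brownian motion $B$; the integral representation $M_z=\int_0^z\sqrt{Z_u}\,d\beta_u$ really comes from the representation theorem for continuous local martingales with absolutely continuous quadratic variation (on a possibly enlarged probability space where $Z$ vanishes), not from DDS directly; (b) the conditional independence of the two pieces given $Z_a=L^0_{\tau_a^+}(X)$ does follow as you say, but it rests on the (time-inhomogeneous) Markov property of the strong solution of the SDE, which should be invoked explicitly since the drift changes at $z=a$.
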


\begin{theorem}[Second Ray-Knight theorem]\label{RK2}
Let $X$ be a Brownian motion issued from zero, $c>0$ a constant and $$\tau(c)=\inf\{t>0:L_t^0(X) >c \}$$ the first time $0$ has accumulated $c$ units of local time. Then, the process $(L^y_{\tau(c)}(X),y\geq 0)$ is distributed as a squared Bessel process of dimension $0$, started from $c$. By symmetry of $X$, the law of the process $(L^{-y}_{\tau(c)}(X),y\geq 0)$ is also that of a squared Bessel process of dimension $0$ started from $c$ and it is independent from the first one.
\end{theorem}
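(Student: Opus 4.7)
The plan is to exploit It\^o's excursion theory at the level zero. Since $X$ is a Brownian motion issued from $0$, its excursions from the origin, parametrized by the local time $L^0(X)$, form a Poisson point process $(\epsilon_s,s\geq 0)$ on excursion space with characteristic measure the It\^o excursion measure $n$. By definition, at time $\tau(c)$ the clock $L^0(X)$ has reached $c$, so the excursions contributing up to time $\tau(c)$ are exactly $(\epsilon_s,\,0<s\leq c)$, which form a Poisson point process with intensity $\mathbf{1}_{(0,c]}(s)\,ds\otimes n(d\epsilon)$. Applying the occupation density formula excursion by excursion gives the additive decomposition
\begin{equation*}
L^y_{\tau(c)}(X)\;=\;\sum_{0<s\leq c}L^y(\epsilon_s),\qquad y\in\mathbb{R},
\end{equation*}
where $L^y(\epsilon)$ denotes the local time of the excursion $\epsilon$ at level $y$.

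For a positive level $y>0$, only positive excursions that reach height $y$ contribute, and since $n(\sup\epsilon>y)=1/(2y)$ is finite, the sum has a.s. finitely many non-zero terms. The law of $L^y(\epsilon)$ under $n$ then follows from the strong Markov property applied at the first passage of $\epsilon$ through $y$: conditionally on $\{\sup\epsilon>y\}$, the post-passage path is a Brownian motion started at $y$ and killed on hitting $0$, and a further excursion-theoretic argument around the new reference level $y$ shows that its local time at $y$ is exponentially distributed with parameter $1/(2y)$. Inserting this into the exponential formula for Poisson point processes then yields, for every $\lambda\geq 0$,
\begin{equation*}
\mathbb{E}\!\left[e^{-\lambda L^y_{\tau(c)}(X)}\right]\;=\;\exp\!\left(-c\!\int\!\bigl(1-e^{-\lambda L^y(\epsilon)}\bigr)n(d\epsilon)\right)\;=\;\exp\!\left(-\frac{c\lambda}{1+2y\lambda}\right),
\end{equation*}
which coincides with the Laplace transform at time $y$ of a squared Bessel process of dimension zero started from $c$.

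To upgrade this marginal identity to a process-level identity, I would iterate the decomposition. For $0<y_1<\cdots<y_k$, applying the strong Markov property of $\epsilon$ under $n$ successively at the first passages through $y_1,\dots,y_k$ decomposes each excursion reaching $y_k$ into independent Brownian segments between consecutive levels; this reduces the joint Laplace integral $\int(1-\exp(-\sum_i\lambda_iL^{y_i}(\epsilon)))\,n(d\epsilon)$ to a telescoping product whose value coincides with the joint Laplace functional of $(Z_{y_1},\dots,Z_{y_k})$, for $Z$ a squared Bessel process of dimension zero started from $c$, computed through the explicit Markovian semigroup. Combined with the almost sure continuity of $y\mapsto L^y_{\tau(c)}(X)$ (a consequence of the same Poisson point decomposition together with regularity of Brownian local times), this determines the law of the whole process on $[0,\infty)$.

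Independence of $(L^y_{\tau(c)}(X),\,y\geq 0)$ and $(L^{-y}_{\tau(c)}(X),\,y\geq 0)$ follows from Poisson thinning: the It\^o measure splits as $n=n^++n^-$ on positive and negative excursions, each of the two processes is a measurable functional of the restriction of the Poisson point process to one of these subsets, and the two restricted Poisson processes are independent. The main obstacle is the upgrade from the one-dimensional Laplace identity to the full process law, namely the iterated use of the strong Markov property under the $\sigma$-finite measure $n$ together with the matching telescoping computation for the squared Bessel process; the Laplace transform at a single level reduces to a one-line computation, but the multivariate version requires careful bookkeeping of the successive restarts.
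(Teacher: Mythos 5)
The paper does not actually contain a proof of Theorem~\ref{RK2}: it is stated as a classical result with references to \cite{marcus2006markov} and \cite{mansuy2008aspects}. What the paper does prove, in Theorem~\ref{PRtauC}, is the excursion-based Poissonian representation \eqref{PRbrow2} in the generality of spectrally negative L\'evy processes; specialized to Brownian motion this is precisely the additive decomposition $L^y_{\tau(c)}(X)=\sum_{0<s\leq c}L^y(\epsilon_s)$ that opens your argument. So the first half of your plan coincides with the method of the paper; the identification with a squared Bessel process of dimension zero is additional work that the paper leaves to the references.

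Your one--dimensional computation is correct: with the Tanaka normalization of local time one has $n(\sup\epsilon>y)=1/(2y)$, and conditionally on $\{\sup\epsilon>y\}$ the post--$T_y$ path is Brownian motion from $y$ killed at $0$, whose local time at $y$ is exponential of parameter $1/(2y)$ by the same excursion argument at level $y$; this gives $n(1-e^{-\lambda L^y})=\lambda/(1+2y\lambda)$ and hence $\mathbb{E}[e^{-\lambda L^y_{\tau(c)}}]=\exp(-c\lambda/(1+2y\lambda))$, the one--point Laplace transform of a $\mathrm{BESQ}^0_c$. The independence of $(L^y_{\tau(c)})_{y\geq 0}$ and $(L^{-y}_{\tau(c)})_{y\geq 0}$ by Poisson thinning over $n=n^++n^-$ is also correct, and this is exactly the independence mechanism that the paper emphasizes breaks down once jumps introduce the set $\mathcal{E}_\pm$.

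The genuine remaining gap is the one you flag yourself: upgrading the single--level Laplace transform to the law of the whole process. The iterated strong Markov/telescoping plan is the right route, but it is not just bookkeeping; what has to be verified is a branching identity. Concretely, for $0<y_1<y_2$, under $n$ restricted to $\{\sup\epsilon>y_1\}$ the path after $T_{y_1}$ is Brownian motion from $y_1$ killed at $0$, and one needs to show that, conditionally on its total local time at $y_1$ being $\ell$, the process $(L^{y_1+r})_{r\geq 0}$ of local times above $y_1$ has the $\mathrm{BESQ}^0_\ell$ law (with, in addition, conditional independence from the rest of the path below $y_1$). This is the spatial Markov/branching property of Brownian local times, proved again by excursion theory at level $y_1$ and a Poisson--process count of the upward excursions; it is precisely what makes the cumulant $u_y(\lambda)=\lambda/(1+2y\lambda)$ solve the Riccati flow $\partial_y u=-2u^2$ and what turns your telescoping product into the $\mathrm{BESQ}^0$ semigroup. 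Without stating and using this branching identity, the finite--dimensional claim remains a plan rather than a proof. The continuity step you appeal to (Trotter's theorem for joint continuity of Brownian local times) is standard and fine for identifying the process law once the finite--dimensional distributions are settled.
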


Further details on these results can be found for instance in \cite{marcus2006markov} and \cite{mansuy2008aspects}. Because of technical reasons and convenience on the narrative, we will present first the results concerning an analogue of the second Ray-Knight theorem and then the first. For Brownian motion, both theorems can be expressed using excursion theory (we refer to Section~\ref{preliminaries} for notation and more information on excursions). Indeed, if $N_0$ and $\overline{N}$ are the measures of the excursions away from 0 for $X$ and the reflected process $S-X$ on the space $D(0,\infty)$ of c\`adl\`ag paths, respectively, then local times up to $\tau(c)$ have the representation
\begin{equation}\label{PRbrow2}
L_{\tau(c)}^y (X)=\int_0^c \int_{D(0,\infty)} \ell (y) \widetilde{K}(ds,d\ell ), \quad y\in \mathbb{R}
\end{equation}
where $\widetilde{K}$ is a Poisson random measure related to $N_0$. The local time process up to $\tau_a^+$ can be expressed as
\begin{equation}\label{PRbrow}
L_{\tau_a^+}^{a-z}(X)=\int_0^{z\wedge a} \int_{D(0,\infty)}\ell (z-s)K(ds,d\ell), \quad z \geq 0,
\end{equation}
where $K$ is also a Poisson random measure but related to $\overline{N}$. Actually, it is known that $N_0=2\overline{N}$, which can be seen as a consequence of L\'evy's identity $(|X_t|,L_t^0(X))_{t\geq 0} \stackrel{(d)}{=} (S_t-X_t,S_t)$ for Brownian motion. Since these measures are multiples of each other, equations (\ref{PRbrow2}) and (\ref{PRbrow}) can be written in terms of a single Poisson random measure. See Theorems \ref{PRtauC} and \ref{PRtauA} below for a proof of these representations in a more general setting.

\begin{figure}[htb]
\centering
\subfigure[]{\includegraphics[width = 0.45\textwidth]{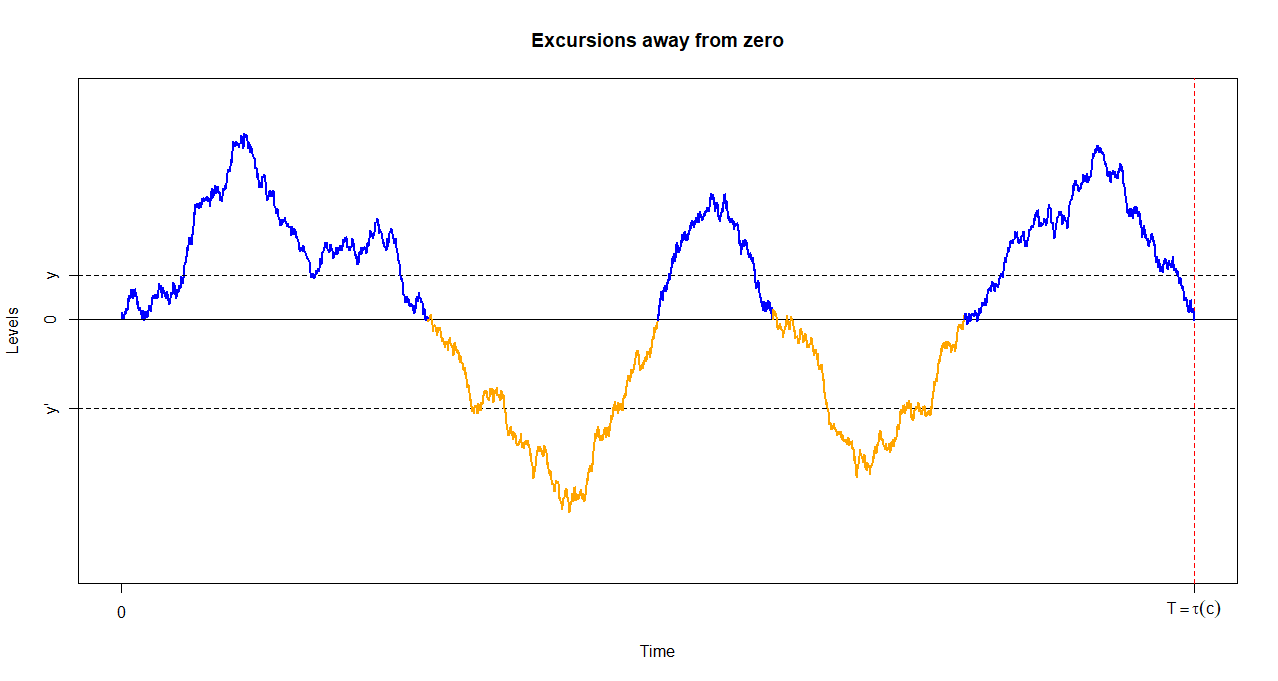}}
\subfigure[]{\includegraphics[width = 0.45\textwidth]{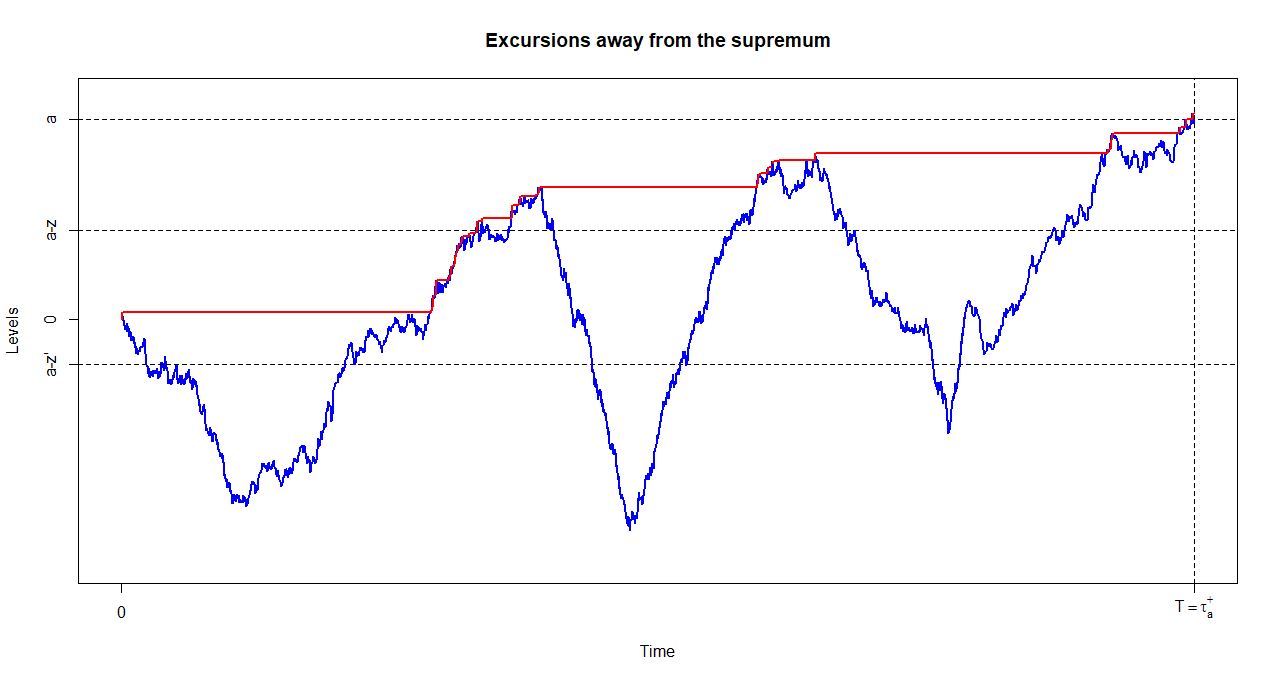}} 
\caption{\small \it (a) Graphic representation of equation \eqref{PRbrow2}. Excursions away from 0 happen until 0 accumulates $c$ units of local time and each one can contribute to the local time of levels $y>0$ and $y'<0$. (b) Graphic representation of equation \eqref{PRbrow}. For the level $a-z$ with $z\in [0,a]$, its local time comes from the contribution of each excursion away from the supremum starting above that level, whilst for $a-z'$ with $z'\geq a$ all excursions can contribute.}
\label{rkplots}
\end{figure}
\FloatBarrier

On both Ray-Knight theorems, the properties of Brownian excursions play a key role to prove the independence of the two parts involved. Consider for instance the second Ray-Knight theorem, in which local times of levels above and below zero are independent. Since the Brownian motion has continuous paths, one can split the set of excursions away from zero into two disjoint sets: the excursions completely above zero, $\mathcal{E}_+$, and the ones completely below, $\mathcal{E}_-$ (see also Figure~\ref{rkplots}(a)). Observe that only excursions in $\mathcal{E}_+$ contribute to the local time of positive levels, and the same is true for $\mathcal{E}_-$ and local times of negative levels. So, given the fact that excursions form a Poisson point process, its restrictions to $\mathcal{E}_+$ and $\mathcal{E}_-$ are independent and this translates to the processes $(L_{\tau(c)}^y(X),y\geq 0)$ and $(L_{\tau(c)}^{-y}(X),y\geq 0)$. The independence of the restrictions of the process of excursions away from a point to disjoint sets can also be used to explain the Markov property of local times. Indeed, conditionally on the information of the local time at a reference level, local times of levels above and below are independent. 

Another important remark about both theorems is that a squared Bessel process of dimension 2 started from zero is in particular a continuous state branching process (CSBP) with linear immigration $\psi(z)=2z$ and branching mechanism $\phi(z)=z^2/2$. A squared Bessel process of dimension $0$ started from $c$ is also a CSBP with the same branching mechanism $\phi$ but no immigration. They satisfy the stochastic differential equations $$Z_t=\int_0^t \sqrt{Z_s}d\beta_s + 2t, \quad t\geq 0,$$ and $$Z_t=c+\int_0^t \sqrt{Z_s}d\beta_s, \quad t\geq 0,$$ respectively, where $\beta$ is a Brownian motion. These facts reflect the branching nature of local times and bring interest on finding out if similar stochastic integral equations are satisfied by them in a more general setting.

We would like to explore if we can obtain analogues of the Ray-Knight theorems in the more general case in which the process does not have continuous trajectories, more specifically for SNLPs. One important issue here is a consequence of a result in \cite{eisenbaum1993necessary}. N. Eisenbaum and H. Kaspi proved that if the local times of the process $X$ have the Markov property, then $X$ necessarily has continuous paths. This implies that for instance, if $X$ is a Lévy process with only negative jumps, the processes $(L_{\tau_a^+}^{a-z}(X),z\geq 0)$ and $(L_{\tau(c)}^{y}(X),y \in \mathbb{R})$ are not Markovian. Hence, trying to characterize the law of the local time process yields to a finer study of the structure of dependence between the local times at different levels. We will explore this via excursions. Additional to the sets $\mathcal{E}_+$ and $\mathcal{E}_-$, an excursion away from a point, say 0, can also be an element of a third set: $\mathcal{E}_\pm$ (see also the forthcoming Figure~\ref{plots}). This set consists of excursions starting above 0 and then jumping below, hence having the possibility to contribute to local times of both positive and negative levels. The positions $(\mathcal{O}_0,\mathcal{U}_0)$ of the excursion prior to and at the first passage time below 0 are called the overshoot and the undershoot, respectively, and will be relevant to describe the law of local times.

Nonetheless, recent efforts have been made in this direction for Lévy processes. In \cite{li2018fluctuations}, B.Li and Z. Palmowski gave an expression for the Laplace transform of functionals of the form $\int_0^T dt f(X_t)$ in terms of generalized scale functions, where $T=\tau_a^+ \wedge \tau_b^-$ is the first exit time from the interval $[b,a]$. These functionals are strongly related to local times via the occupation density formula mentioned before. Later, in \cite{li2020local} B. Li and X. Zhou obtained joint Laplace transforms also in terms of generalized scale functions and related the law of the local times under a change of measure with permanental processes. 

In 2023, J. Contreras and V. Rivero \cite{contreras2022generalized} extended Li and Palmowski's results for functionals also involving the supremum $S$ of $X$. Actually, we can make use of the results there in order to gain some information on the local time process up to $\tau_a^+$, $(L_{\tau_a^+}^{a-z}(X),z\geq 0)$. According to their notation, for a measurable and bounded function $f:\mathbb{R} \to \mathbb{R}_+$, the generalized scale function $W_f$ is defined by 
\begin{equation} \label{scalef}
W_f(x,b)=W(x-b)\exp\left\{\int_b^x ds \overline{N}\left(1-e^{-\int_0^\zeta dr f(s-\mathbf{e}(r))} ,H<s-b\right) \right\}, \quad x \geq b>-\infty,
\end{equation}
which, up to a positive constant, is also a solution of the integral equation $$W_f(u,v)=W(u-v) + \int_v^u dz W(u-z)f(z)W_f(z,v).$$ In case the function $f$ is a constant, say $f\equiv q$, $W_f$ is the usual $q-$scale function associated to $X$ and when $f\equiv 0$ we will just write $W$ (see \cite{kuznetsov2012theory} for further reference). In this paper, the function $\mathcal{W}_f : [0,\infty) \to \mathbb{R}_+$ defined by
\begin{equation} \label{newScaleF}
\mathcal{W}_f(x):= \lim_{b\to -\infty} \frac{W_f(0,b)}{W_f(x,b)} = \exp\left\{ -\int_0^x ds \overline{N} \left(1-e^{-\int_0^\zeta dr f(s-\mathbf{e}(r))} \right)\right\}, \quad x \geq 0,
\end{equation}
will play an important role. See Sections~\ref{srkt} and \ref{frkt} to observe its connections with the analogues of the Ray-Knight theorems for the spectrally negative case.

Finally, in \cite{xu2021ray}, W. Xu explored the local times of a spectrally positive $\alpha-$stable process $Y$ up to $\tau(c)$. Since $X:=-Y$ is spectrally negative, a consequence of their results is that conditioned on $\tau(c)<\infty$, the process $(L_{\tau(c)}^{-y}(X),x\geq 0)$ has the law of a non-Markovian branching system which they called \textit{rough continuous state branching process}. This class of processes is characterized for being weak solutions to certain stochastic Volterra equations, and in particular $(L_{\tau(c)}^{-y}(X),y\geq 0)$ satisfies $$Z_y=c(1-bW(y))+\int_0^y \int_0^\infty \int_0^{Z_s} (W(y-s)-W(y-s-u))\widetilde{N}_\alpha (ds,du,dz),$$ where $b$ is the drift of $Y$, $W$ is its scale function and $\widetilde{N}_\alpha$ is certain compensated Poisson random measure. Note that this stochastic equation is more involved than those presented before for Bessel processes. We do not intend to tackle this point of view in this article and let it for future studies.

The content of the article is organized as follows. Section~\ref{preliminaries} is dedicated to introduce notation and recall some properties of excursions. In Sections~\ref{srkt} and \ref{frkt} we state our main results. In Section~\ref{srkt}, we perform a study of local times constituting an analogue of the second Ray-Knight theorem. Here, we provide a Poissonian construction for $(L_{\tau(c)}^{y}(X),y \in \mathbb{R})$, proving that it is infinitely divisible and giving its corresponding L\'evy measure and emphasizing the importance of the overshoots and undershoots with respect to zero. Regarding the process $(L_{\tau_a^+}^{a-z}(X),z \geq 0)$, Section~\ref{frkt} contains an analogue of the first Ray-Knight theorem. We derive a Poissonian construction and describe its law as an infinitely divisible process, focusing on the joint law of local times under $\overline{N}$. Section~\ref{decomp} is based on the decomposition of the L\'evy measure of an infinitely divisible process found in \cite{eisenbaum2019decompositions}, applying the ideas there to the L\'evy measures of the processes of local times. Finally, Section~\ref{auxR} contains some results which are useful for the main theorems and Section~\ref{prfs} compiles all the proofs.

\section{Preliminaries on excursions}\label{preliminaries}

First, we deal with the excursions away from the supremum, or equivalently, the excursions away from 0 for $S-X$. Let us introduce the space $\mathcal{E}$ of positive right continuous paths with left limits and defined on an interval: $$\mathcal{E}=\left\{\mathbf{e}:[0,\zeta] \to [0,\infty)  \ | \ \zeta \in (0,\infty], \ \mathbf{e}((0,\zeta))\subset (0,\infty) \text{ and }  \mathbf{e} \text{ is c\`adl\`ag}  \right\}.$$ This space is usually regarded as a subset of the more general space $D(0,\infty)$ of c\`adl\`ag paths, so we might write one or the other depending on the context. For an element $\mathbf{e}\in \mathcal{E}$, the right endpoint of its interval of definition is called duration or lifetime and it is denoted by $\zeta(\mathbf{e})$. The supremum of $\mathbf{e}$ is called height and it is denoted by $H(\mathbf{e})=\sup_{v\in [0,\zeta]} \mathbf{e}(v)$.

For a SNLP $X$, it is well known that one can take $S$ as the local time at the supremum and that its right continuous inverse is the subordinator $(\tau_t^+,t\geq 0)$. For $t>0,$ such that $\tau_t^+ \neq \tau_{t-}^+ := \lim_{s\uparrow t}\tau_s^+$, the supremum is constant and equal to $t$ on the interval $[\tau_{t-}^+,\tau_t^+]$. Therefore, we can define $$\mathbf{e}_t(v):=(S-X)_{\tau_{t-}^+ +v}, \ \ \ 0\leq v\leq \tau_t^+ - \tau_{t-}^+,$$ the excursion of $(S-X)$ at local time $t$. In this case, $\mathbf{e}_t \in \mathcal{E}$ and actually, $\zeta(\mathbf{e}_t)=\tau_t^+ - \tau_{t-}^+$. In case $\tau_t^+=\tau_{t-}^+$, one assigns $\mathbf{e}_t=\delta$, where $\delta \notin \mathcal{E}$ is an auxiliary state. See also \cite{blumenthal2012excursions} and \cite{fitzsimmons2006excursion} for more information.

A result of excursion theory (see for example \cite[Th. 6.14]{kyprianou2014fluctuations}) states that there exists a measure space $(\mathcal{E},\Sigma,\overline{N})$ such that $\Sigma$ contains the sets of the form $$\left\{\mathbf{e}\in \mathcal{E} : \zeta(\mathbf{e})\in A, H(\mathbf{e})\in B, \mathbf{e}(\zeta)\in C \right\},$$ where $A,B,C$ are Borel sets on $\mathbb{R}$. Furthermore, if $\limsup_{t \to \infty}X_t=\infty$ a.s. (that is, iff $\Psi'(0+)\geq 0$), then $\{(t,\mathbf{e}_t) : t>0, \mathbf{e}_t \neq \delta \}$ is a Poisson point process of intensity $ds\otimes \overline{N} (d\mathbf{e})$. This fact explains our hypotheses \textbf{(B1)} and \textbf{(B2)} in Section~\ref{intro4}. In the other case, if $\Psi'(0+)<0$ one obtains a killed Poisson point process and it requires a different treatment. For a deeper insight on this excursion measure, we refer to \cite{chaumont2005levy} and \cite{duquesne2003path}.

For excursions away from a point we have a similar situation, but we need a different subordinator. Taking as a reference a given point $y\in \mathbb{R}$, there exists an associated subordinator $\sigma^y =(\sigma^y_s,s\geq 0)$ defined by the right inverse of the local time as follows $$\sigma_s^y=\inf\{t>0: L_t^y(X) >s \}, \quad s\geq 0.$$ To formally define the excursions away from a point, we work with $\sigma^y$. For each $s>0$, $\sigma_s^y$ corresponds to the first time the process $X$ accumulates $s$ units of local time at level $y$. We can define the excursions by looking at the constancy times of $L_\cdot ^y(X)$, or equivalently, at the increasing times of $\sigma^y$. Indeed, for each $u\geq 0$ such that $\sigma^y_u > \sigma^y_{u-}:=\lim_{s\uparrow u} \sigma^y_s$, $$\mathbf{e}_u^y(t)=X_{\sigma^y_{u-}+t}, \qquad 0\leq t\leq \sigma^y_{u}-\sigma^y_{u-}$$ and now the quantity $\zeta(\mathbf{e}_u^y):=\sigma^y_{u}-\sigma^y_{u-}$ is called the length of the excursion. The excursion process is a Poisson point process on the space $[0,\infty)\times \mathcal{E}^y$, where $\mathcal{E}^y$ is the space of c\`adl\`ag paths with lifetime, starting and ending at $y$, and the intensity is given by the product of the Lebesgue measure and the so-called It\^o measure: $ds \otimes N_y(d\mathbf{e})$. Again, this space can be regarded as a subspace of $D(0,\infty)$. More information on the excursion measure away from a point can be found in \cite{pardo2018excursion}. The previous paragraphs allow to use the tools from the theory of Poisson point processes, such as the compensation and exponential formulas, to perform computations related to excursions of $X$.

We end this section with the following Proposition involving scale functions and quantities related to $\overline{N}$, which will be useful in some of the results later. 

\begin{proposition}\label{intBarN}
Let $f:\mathbb{R} \to \mathbb{R}_+$ a measurable and bounded function. Then, the function $\mathcal{W}_f$ in \eqref{newScaleF} is well defined and it is a solution of the following equation involving the scale function $W$ of $X$:
\begin{equation} \label{intBarN1}
F(x)=F(0)-\int_0^\infty dz(W(x)-W(x-z)) f(x-z) F(z), \quad x > 0.
\end{equation} 
Define $$G_f (x)= \widehat{\mathbb{E}}_x \left[\exp\left\{-\int_0^{\tau_0^-} ds f(x-X_s) \right\} \right], \quad  x\geq 0,$$ and $$g_f(x)=\overline{N} \left[1-e^{-\int_0^\zeta dr f(x-\mathbf{e}(r))}  \right]=\overline{N} \left[1-e^{-\int_0^\infty dy f(x-y) L_\zeta^y }  \right], \quad x\in \mathbb{R}.$$ Then, $$G_f(x)=\mathcal{W}_f(x)=\exp\left\{ -\int_0^x ds g_f(s)\right\},$$ and in particular we have the relation $$\frac{d}{dx}(-\log G_f)(x) = g_f(x).$$
\end{proposition}

\section{Second Ray-Knight theorem}\label{srkt}

Recall that a non-negative infinitely divisible process $\psi=(\psi_x,x\in E)$ is characterized by its L\'evy measure $\mu$, and as such it satisfies 
\begin{equation*}
\mathbb{E}\left[e^{-\int_\mathbb{E} f(x)\psi_x dx} \right]=\exp\left\{-\int_{\mathbb{R}_+ ^\mathbb{R}} \left(1-e^{-\int_\mathbb{E} f(x)\omega(x)dx} \right) \mu(d\omega) \right\},
\end{equation*}
for any non-negative, measurable and bounded function $f$. In \cite{eisenbaum2019decompositions}, N. Eisenbaum provides a deeper understanding of the L\'evy measure by decomposing it into two parts, essentially corresponding to the information of a process between the first and last visits to a point and the complement. In our case, we will explore this decomposition in Section~\ref{decomp}.

We begin with the following theorem, which proves the local times up to $\tau(c)$ are infinitely divisible and also provides a Poissonian representation of them.

\begin{theorem} \label{PRtauC}
Let $X$ be a spectrally negative L\'evy process and denote by $N_0$ the associated excursion measure away from zero for $X$. Assume $X$ satisfies hypothesis {\normalfont\textbf{(A)}} and {\normalfont\textbf{(B2)}}. Then, the local time process $(L_{\tau(c)}^y(X),y\in \mathbb{R})$ is infinitely divisible and its L\'evy measure $\mu^{(c)}$ is given by $$\mu^{(c)}(d\omega)=cN_0(L_\zeta^\cdot \in d\omega),$$ that is, the image of the excursion measure away from zero $N_0$ under the function that maps an excursion into its local time process up to its lifetime. Moreover, local time process admits the representation
\begin{equation}
L_{\tau(c)}^y (X)=\int_0^c \int_{D(0,\infty)} \ell (y) \widetilde{K}(ds,d\ell ), \quad y\in \mathbb{R},
\end{equation}
where $\widetilde{K}$ is a Poisson random measure of intensity $ds\otimes \widetilde{M}(d\ell)$, $\widetilde{M}$ being the image of $N_0$ under the map that associates an excursion $\mathbf{e}$ its local time process up to its lifetime $\zeta$: $\mathbf{e}\mapsto \left(L_\zeta ^r (\mathbf{e}),r\in \mathbb{R} \right)$.
\end{theorem}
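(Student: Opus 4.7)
The plan is to exploit the excursion-theoretic description of $X$ away from $0$. Under hypothesis \textbf{(B2)} the excursion process $\{(s,\mathbf{e}_s^0):\, s\geq 0,\, \mathbf{e}_s^0\neq \delta\}$ is a Poisson point process on $[0,\infty)\times \mathcal{E}$ with intensity $ds\otimes N_0(d\mathbf{e})$, and $\tau(c)=\sigma_c^0$ is the inverse local time evaluated at $c$. Since the trajectory of $X$ on $[0,\tau(c)]$ is precisely the concatenation of the excursions indexed by $s\in(0,c]$, every additive functional of $X$ along $[0,\tau(c)]$ splits as a sum of contributions attached to individual excursions.

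The first step is to establish the excursion-wise identity
\begin{equation*}
L_{\tau(c)}^y(X)=\sum_{s\leq c,\,\mathbf{e}_s^0\neq\delta} L_\zeta^y(\mathbf{e}_s^0),
\end{equation*}
where $L_\zeta^y(\mathbf{e})$ denotes the local time at level $y$ of the excursion path $\mathbf{e}\in\mathcal{E}$, defined via \eqref{ltdef}. I would test both sides against an arbitrary bounded non-negative function $f$ and apply the occupation density formula to $X$ restricted to $[0,\tau(c)]$ and separately to each excursion $\mathbf{e}_s^0$, using $\int_0^{\tau(c)} f(X_r)\,dr=\sum_s \int_0^{\zeta(\mathbf{e}_s^0)} f(\mathbf{e}_s^0(r))\,dr$. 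This identifies the two spatial processes as densities with respect to Lebesgue measure, i.e.\ as equal for a.e.~$y$, which is enough for all subsequent Laplace-functional computations.

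Next, I would let $\widetilde{K}$ be the image of the excursion Poisson point process under the measurable map $(s,\mathbf{e})\mapsto (s,(L_\zeta^r(\mathbf{e}))_{r\in\mathbb{R}})$; by the mapping theorem for Poisson point processes, $\widetilde{K}$ is Poisson on $[0,\infty)\times D(0,\infty)$ with intensity $ds\otimes \widetilde{M}(d\ell)$, where $\widetilde{M}=N_0\circ (L_\zeta^\cdot)^{-1}$. Rewriting the sum of the previous step as an integral against $\widetilde{K}$ yields the announced Poissonian representation. Infinite divisibility and the identification of the L\'evy measure then follow from the exponential formula for Poisson integrals: for every non-negative bounded $f$ with compact support,
\begin{equation*}
\mathbb{E}\!\left[\exp\!\left(-\!\int_\mathbb{R} f(y) L_{\tau(c)}^y(X)\,dy\right)\right]=\exp\!\left\{-c\!\int_{D(0,\infty)}\!\left(1-e^{-\int_\mathbb{R} f(y)\ell(y)\,dy}\right)\widetilde{M}(d\ell)\right\},
\end{equation*}
which, after pulling $\widetilde{M}$ back to excursion space, is the L\'evy--Khintchine functional of a non-negative infinitely divisible process with L\'evy measure $c N_0(L_\zeta^\cdot\in d\omega)$.

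The step I expect to require most care is the additivity identity at the distinguished level $y=0$: the left-hand side equals $c$ by construction of $\tau(c)$, while the right-hand side, if interpreted pointwise, gives zero since an $N_0$-excursion does not visit $0$ in the open interval $(0,\zeta)$ and therefore carries no occupation time at $0$. This boundary mismatch is harmless because the local time process is a density in $y$ and point values at $y=0$ do not affect any Laplace functional; nonetheless it calls for stating the identity for Lebesgue-a.e.~$y$ or, equivalently, adopting a convention that treats $y=0$ as a null set for the spatial integration. I would take the density interpretation, which is consistent with the Laplace-functional computation that drives the rest of the proof.
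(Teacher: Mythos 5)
Your proposal takes essentially the same route as the paper: decompose $\int_0^{\tau(c)}f(X_t)\,dt$ into excursion contributions, apply the occupation density formula to each excursion to obtain the Poissonian representation for a.e.\ $y$, push the excursion Poisson point process through the local-time map to produce $\widetilde{K}$, and invoke the exponential formula to read off the Laplace functional and hence the L\'evy measure $cN_0(L_\zeta^\cdot\in d\omega)$. You shorten the argument slightly by extracting infinite divisibility directly from the L\'evy--Khintchine form of the Laplace functional rather than running the paper's separate $n$-fold splitting via the strong Markov property at $\tau(c/n)$, and your remark about the pointwise identity at $y=0$ (each excursion contributes zero local time at $0$ while $L^0_{\tau(c)}=c$, so the representation must be read for a.e.\ $y$ or after taking right-continuous versions) is a genuine subtlety that the paper passes over with only a brief appeal to right continuity.
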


The representation for local times is similar to that in \cite[Ch.6]{li2020continuous} for CSBP processes. In that context, the intensity of the Poisson random measure $\widetilde{K}$ is $ds\otimes Q_H$ and $Q_H$ is a measure that has the information of both an entrance law and the transitions of a branching process and it is known as the Kuznetsov measure (see \cite[Ch.XIX]{dellacherie1992processus}). In our setting, $\widetilde{M}$ cannot be a Kuznetsov measure, since otherwise local times would be Markovian.

Actually, we can elaborate on these results a bit more, but we need to introduce formally the concept of overshoot and undershoot of a path. These quantities tell us the relative position of the path prior and at the first passage time below a given level $x$. In general, for a c\`adl\`ag path $Y$ having only negative jumps and a level $x$ such that $Y_0 \geq x$, we denote by $(\mathcal{O}_x(Y),\mathcal{U}_x(Y))$ to the pair of values $$(\mathcal{O}_x(Y),\mathcal{U}_x(Y))=(Y_{\tau_x^-(Y)-}-x, Y_{\tau_x^-(Y)}-x),$$ where $\tau_x^-(Y)=\inf\{t>0: Y_t<x\}$. Notice that if $Y$ crosses below $x$ continuously, both quantities are equal to zero. That is the reason we did not see their role in the Brownian motion case, since it has continuous paths. Nonetheless, since in this case the trajectories have negative jumps, they will arise naturally and will be really important for path decompositions.

According to \cite{pardo2018excursion}, $N_0$ is now carried by the partition of the space of excursions into the sets $\mathcal{E}_+ \sqcup \mathcal{E}_- \sqcup \mathcal{E}_\pm$, which consist in completely positive, completely negative and mixed excursions, respectively. Observe that an excursion $\mathbf{e} \in \mathcal{E}_+$ contributes only to the local times of positive levels. Similarly, $\mathbf{e} \in \mathcal{E}_-$ only contributes to negative levels. But, unlike the Brownian motion case, when the process has one-sided jumps there is an additional kind of excursions, $\mathcal{E}_\pm$, which can add to the local time of both positive and negative levels. As a consequence, we cannot split the information of local times above and below a point into functions of disjoint sets of excursions, hence losing the independence that for instance the Brownian motion had in Theorems~\ref{RK1} and \ref{RK2}. This also translates to the absence of the Markov property for the process of local times.

\begin{figure}[htb]
\centering
\subfigure[]{\includegraphics[width = 0.32\textwidth]{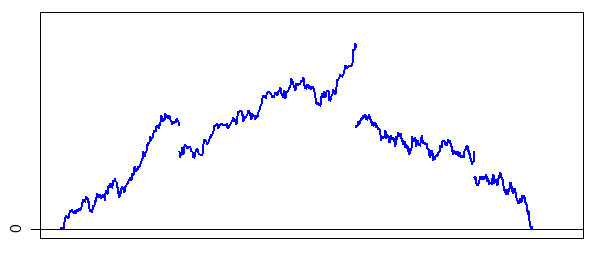}} 
\subfigure[]{\includegraphics[width = 0.32\textwidth]{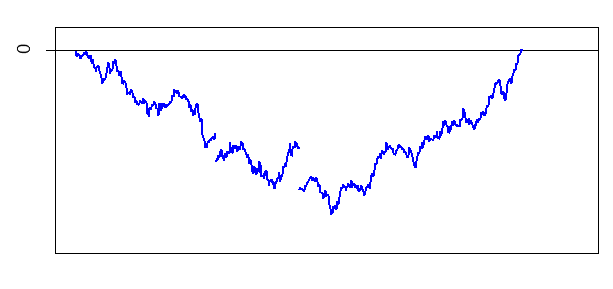}}
\subfigure[]{\includegraphics[width = 0.32\textwidth]{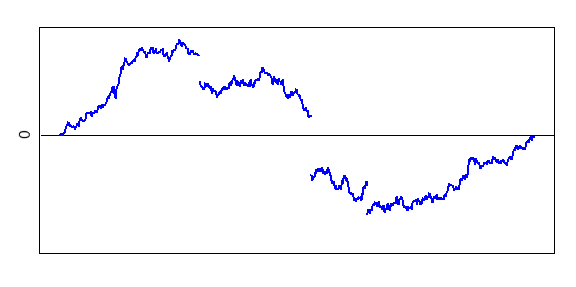}}
\caption{\small Typical excursions away from zero in (a) $\mathcal{E}_+$, (b) $\mathcal{E}_-$ and (c) $\mathcal{E}_\pm$.}
\label{plots}
\end{figure}
\FloatBarrier

A typical excursion $\mathbf{e} \in \mathcal{E}_\pm$ starts positive and then it jumps below 0, at time $\tau_0^-(\mathbf{e})$. After this time, since the path does not have positive jumps, it will creep upwards 0 and end at its lifetime $\zeta(\mathbf{e})$. Therefore, we can decompose $\mathbf{e}$ into the paths $\underleftarrow{\mathbf{e}}$ and $\underrightarrow{\mathbf{e}}$, where $$\underleftarrow{\mathbf{e}}(t)=\mathbf{e}((\tau_0^-(\mathbf{e})-t)-), \quad 0\leq t \leq \tau_0^-(\mathbf{e})$$ and $$\underrightarrow{\mathbf{e}}(t)=\mathbf{e}(\tau_0^-(\mathbf{e})+t), \quad 0\leq t \leq \zeta(\mathbf{e})-\tau_0^-(\mathbf{e}).$$ As we mentioned before, the position relative to 0 before and at first passage are called the overshoot and undershoot at 0 of the path, that is, $\mathcal{O}_0(\mathbf{e})=\mathbf{e}(\tau_0^--)>0$ and $\mathcal{U}_0(\mathbf{e})=\mathbf{e}(\tau_0^-)<0$. By the strong Markov property at time $\tau_0^-,$ it turns out that $\underleftarrow{\mathbf{e}}$ and $\underrightarrow{\mathbf{e}}$ are conditionally independent given $(\mathcal{O}_0,\mathcal{U}_0)$. Moreover, the reversed path $\underleftarrow{\mathbf{e}}$ has the same law as the dual $\widehat{X}$ started from $\mathcal{O}_0$ and killed at its first hitting time of $0$, denoted by $\widehat{\mathbb{E}}_{\mathcal{O}_0}^{0}$. On the other hand, $\underrightarrow{\mathbf{e}}$ has the same law as $X$ started from $\mathcal{U}_0$ and killed at its first hitting time of 0, denoted by $\mathbb{E}_{\mathcal{U}_0}^{0}$.
We can actually compute the joint law of $(\mathcal{O}_0,\mathcal{U}_0)$ in the set $\mathcal{E}_\pm$ under $N_0$, as can be read in Lemma~\ref{ouN0} (see Section~\ref{auxR}). This helps to obtain the following refined version of Theorem~\ref{PRtauC}.

\begin{theorem} \label{LevyMc}
Let $X$ be a spectrally negative L\'evy process satisfying {\normalfont\textbf{(A)}} and {\normalfont\textbf{(B2)}}. Then, the L\'evy measure $\mu^{(c)}$ associated to the local time process $(L_{\tau(c)}^y(X),y\in \mathbb{R})$ can be decomposed as $$\mu^{(c)}(d\omega)=cN_0(L_\zeta^\cdot \in d\omega)=cN_0(L_\zeta^\cdot \in d\omega,\mathcal{E}_+)+cN_0(L_\zeta^\cdot \in d\omega,\mathcal{E}_-)+cN_0(L_\zeta^\cdot \in d\omega,\mathcal{E}_\pm),$$ and when restricted to $\mathcal{E}_\pm$,
\begin{align*}
N_0(L_\zeta^\cdot \in d\omega,\mathcal{E}_\pm) &=\int_{(0,\infty)} \int_{(-\infty,0)} db \Pi(du-b) \widehat{\mathbb{E}}_{b}^{0} \otimes \mathbb{E}_{u}^{0}(L_\zeta^\cdot \in d\omega).
\end{align*}
In particular, for any $f:\mathbb{R}\to [0,\infty)$ measurable and bounded, if we write $f=f_++f_-$, with $f_+=f1_{(0,\infty)}$ and $f_-=f1_{(-\infty,0)}$, the Laplace transform of $\int_\mathbb{R} f(y)L_{\tau(c)}^y(X) dy$ is given by
\begin{align*}
\mathbb{E}\left[e^{-\int_\mathbb{R} f(y)L_{\tau(c)}^y(X) dy} \right]&= \exp\left\{-cN_0\left[1-e^{-\int_\mathbb{R} f(y)L_\zeta^y dy} \right] \right\} \\
&= \exp\left\{-cN_0\left[1-e^{-\int_{(0,\infty)} f_+(y)L_\zeta^y dy},\mathcal{E}_+ \right] \right\} \exp\left\{-cN_0\left[1-e^{-\int_{(-\infty,0)} f_-(y)L_\zeta^y dy},\mathcal{E}_- \right] \right\}\\
&\times \exp\left\{-c \int_{(0,\infty)} \int_{(-\infty,0)} db \Pi(du-b)\left[1- \mathcal{W}_{f_{+,b}}(b)\mathcal{W}_{f_{-,u}}(-u) \right] \right\},
\end{align*}
where $\mathcal{W}_{f_{+,b}}$ and $\mathcal{W}_{f_{-,u}}$ are defined as in \eqref{newScaleF} with $f_{+,b}(r):=f_+(-r+b)$ and $f_{-,u}(r):=f_-(r+u)$.
\end{theorem}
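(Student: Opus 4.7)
The plan is to upgrade Theorem~\ref{PRtauC} by inserting the partition $\mathcal{E}=\mathcal{E}_+\sqcup\mathcal{E}_-\sqcup\mathcal{E}_\pm$ into the identity $\mu^{(c)}(d\omega)=cN_0(L_\zeta^\cdot\in d\omega)$. The three-fold splitting of $\mu^{(c)}$ is then immediate, and the substantive work reduces to describing the restriction $N_0(\cdot,\mathcal{E}_\pm)$ and to identifying the corresponding factor of the Laplace transform in terms of the scale functions $\mathcal{W}_{f_{+,b}}(b)$ and $\mathcal{W}_{f_{-,u}}(-u)$.

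For the restriction $N_0(L_\zeta^\cdot\in d\omega,\mathcal{E}_\pm)$, I disintegrate along the pair $(\mathcal{O}_0,\mathcal{U}_0)$. By Lemma~\ref{ouN0}, the joint law of the overshoot and undershoot under $N_0(\cdot,\mathcal{E}_\pm)$ is $db\,\Pi(du-b)$ on $(0,\infty)\times(-\infty,0)$. The path decomposition recalled just before the theorem asserts that, conditionally on $(\mathcal{O}_0,\mathcal{U}_0)=(b,u)$, the time-reversed pre-$\tau_0^-$ fragment $\underleftarrow{\mathbf{e}}$ and the post-$\tau_0^-$ fragment $\underrightarrow{\mathbf{e}}$ are independent with respective laws $\widehat{\mathbb{P}}_{b}^{0}$ and $\mathbb{P}_{u}^{0}$. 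Since $\underleftarrow{\mathbf{e}}$ is nonnegative and $\underrightarrow{\mathbf{e}}$ is nonpositive, the full local time process splits additively as $L_\zeta^y(\mathbf{e})=L_{\tau_0^-}^y(\mathbf{e})+L_{\zeta-\tau_0^-}^y(\underrightarrow{\mathbf{e}})$, and by invariance of occupation densities under time reversal the first summand coincides with the local time process of $\underleftarrow{\mathbf{e}}$. Combining these inputs yields the integral formula for $N_0(L_\zeta^\cdot\in d\omega,\mathcal{E}_\pm)$.

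For the Laplace transform, the exponential formula applied to the Poisson random measure $\widetilde{K}$ of Theorem~\ref{PRtauC} gives
$$
\mathbb{E}\!\left[e^{-\int_\mathbb{R} f(y)L_{\tau(c)}^y(X)\,dy}\right]=\exp\!\left\{-cN_0\!\left[1-e^{-\int_\mathbb{R} f(y)L_\zeta^y\,dy}\right]\right\}\!,
$$
and the exponent splits along the three classes. On $\mathcal{E}_+$ (resp.\ $\mathcal{E}_-$) only $f_+$ (resp.\ $f_-$) contributes, producing the first two factors directly. On $\mathcal{E}_\pm$, the occupation density formula together with the conditional independence of $\underleftarrow{\mathbf{e}}$ and $\underrightarrow{\mathbf{e}}$ factorises the integrand under $N_0(\cdot,\mathcal{E}_\pm)$ into
$$
\widehat{\mathbb{E}}_{b}^{0}\!\left[e^{-\int_0^{\tau_0^-}f_+(\widehat{X}_s)\,ds}\right]\cdot\mathbb{E}_{u}^{0}\!\left[e^{-\int_0^{T_0}f_-(X_s)\,ds}\right].
$$
The first factor equals $\mathcal{W}_{f_{+,b}}(b)$ by Proposition~\ref{intBarN} applied with the change of variable $g(r)=f_+(b-r)=f_{+,b}(r)$. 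The second factor is handled by first invoking the duality $\widehat{X}\stackrel{(d)}{=}-X$ to rewrite it as $\widehat{\mathbb{E}}_{-u}\!\left[\exp\bigl(-\int_0^{\tau_0^-}f_-(-\widehat{X}_s)\,ds\bigr)\right]$, and then applying Proposition~\ref{intBarN} with $h(r)=f_-(r+u)=f_{-,u}(r)$ to obtain $\mathcal{W}_{f_{-,u}}(-u)$. Integrating this product against $db\,\Pi(du-b)$ delivers the third factor.

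The hardest step will be the rigorous disintegration of $N_0$ on $\mathcal{E}_\pm$ at the stopping time $\tau_0^-$: justifying that the fragments $\underleftarrow{\mathbf{e}}$ and $\underrightarrow{\mathbf{e}}$ are conditionally independent with the stated marginal laws requires combining the strong Markov property under the $\sigma$-finite excursion measure $N_0$, the overshoot/undershoot disintegration of Lemma~\ref{ouN0}, and the standard time-reversal identity relating the pre-$\tau_0^-$ reversed path under $N_0$ with $\widehat{X}$ killed upon hitting $0$. Once this disintegration is secured, the remainder is a routine assembly of the exponential formula for Poisson point processes, the occupation density formula, and the scale-function identifications of Proposition~\ref{intBarN}.
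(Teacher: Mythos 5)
Your proposal is correct and follows essentially the same route as the paper: partition the L\'evy measure via $\mathcal{E}_+\sqcup\mathcal{E}_-\sqcup\mathcal{E}_\pm$, disintegrate $N_0$ on $\mathcal{E}_\pm$ at $\tau_0^-$ using Lemma~\ref{ouN0} and the conditional independence of $\underleftarrow{\mathbf{e}}$ and $\underrightarrow{\mathbf{e}}$, and identify the two conditional factors as $\mathcal{W}_{f_{+,b}}(b)$ and $\mathcal{W}_{f_{-,u}}(-u)$. The only (cosmetic) difference is that you invoke the identity $G_f=\mathcal{W}_f$ from Proposition~\ref{intBarN} (plus duality) to evaluate both conditional Laplace functionals, whereas the paper re-runs the excursion-away-from-the-supremum decomposition for $\mathbb{E}^0_u$ directly and says ``analogously for $b>0$''; since Proposition~\ref{intBarN} is proved by exactly that decomposition, the arguments coincide.
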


In particular, in Xu's $\alpha-$stable case \cite{xu2021ray}, there is no Brownian component, (and so $N_0$ is only carried by $\mathcal{E}_\pm$) and they only consider negative levels, the expression above simplifies to
\begin{eqnarray*}
\mathbb{E}\left[e^{-\int_{(-\infty,0)} f_-(y)L_{\tau(c)}^y(X) dy} \right]&=&\exp\left\{-c \int_{(-\infty,0)} N_0\left(\mathcal{U}_0 \in du,\mathcal{E}_\pm \right) \left[1-\mathbb{E}^0_u\left(e^{-\int_{(-\infty,0)} f_-(y)L_\zeta^y dy} \right) \right] \right\} \\
&=& \exp\left\{-c \int_{(0,\infty)} db \int_{(-\infty,0)} \Pi(du-b)  \left[1-\mathcal{W}_{f_{-,u}}(-u) \right]\right\}.
\end{eqnarray*}

The consideration on the overshoots and undershoots also allow to provide a further Poissonian construction involving them. Some results on transformations of Poisson point processes will be used and we refer to \cite[Ch. 4]{resnick1992adventures} and \cite[Ch. 12]{kallenberg1997foundations} for further details. 

Suppose that $X$ satisfies hypotheses \textbf{(A)} and \textbf{(B2)} and let us assume for now that it does not have a Brownian component. Then, all excursions of $X$ away from 0 belong to the set $\mathcal{E}_\pm$. Using the additive property of local times, for any $y>0$, $L_{\tau(c)}^y(X)$ can be decomposed as the sum of the local time at $y$ of each excursion $(s,\mathbf{e}^0_s)$ away from 0 up to local time $c$. Observe that, for a generic excursion $\mathbf{e}^0$, given its overshoot $b=\mathcal{O}_0(\mathbf{e}^0)$, the law of the reversed path $\underleftarrow{\mathbf{e}}^0$ under $N_0$ is $\widehat{\mathbb{E}}_{b}^0$, that is, the same law as $\widehat{X}$ started from $b$ and killed at the first hitting time of zero. Therefore, conditionally on $\mathcal{O}_0(\mathbf{e}^0)$, this path can be decomposed again in excursions away from the infimum, say $\{(r,\underline{\mathbf{e}}_r): 0<r<\mathcal{O}_0(\mathbf{e}^0) \}$, and express the local time at $y$ in terms of the local time at $y-r$ of $\underline{\mathbf{e}}_r$. A similar computation can be performed for $L_{\tau(c)}^z(X)$, for any $z<0$, by decomposing the path $\underrightarrow{\mathbf{e}}^0$ into excursions away from the supremum, say $\{(v,\overline{\mathbf{e}}_v): \mathcal{U}_0(\mathbf{e}^0) <v<0 \}$. Observe that the law $\underline{\widehat{N}}$ of the excursions away from the infimum for $\widehat{X}$ coincides with $\overline{N}$ because of duality and hence everything can be expressed in terms of this measure. The figure below shows a graphic representation of this decomposition. 

\begin{figure}[htb]
\centering
\includegraphics[width = 0.6\textwidth]{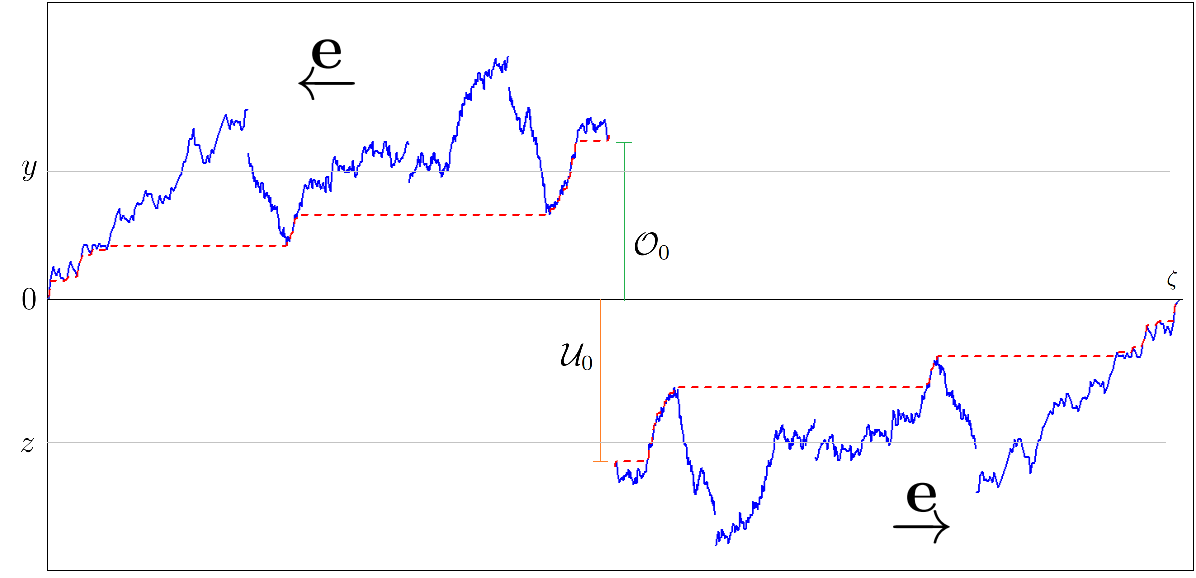}
\caption{\small \it Representation of a typical excursion away from zero $\mathbf{e} \in \mathcal{E}_\pm$ split into $\underleftarrow{\mathbf{e}}$ and $\underrightarrow{\mathbf{e}}$. The reversed path $\underleftarrow{\mathbf{e}}$ is further decomposed into excursions away from the infimum which contribute to the local time of levels $y>0$ and $\underrightarrow{\mathbf{e}}$ is decomposed into excursions away from the supremum, contributing to the local time of levels $z<0$.}
\end{figure}
\FloatBarrier

Repeating the above construction for each excursion away from 0, we have that the excursions away from zero can be seen as an atom in a marked Poisson point process that can be written as
\begin{equation} \label{atoms}
\{(s,\mathcal{O}_0(\mathbf{e}^0_s),\mathcal{U}_0(\mathbf{e}^0_s),((r,\underline{\mathbf{e}}^s_r), 0<r<\mathcal{O}_0(\mathbf{e}^0_s)),((v,\overline{\mathbf{e}}^s_v), \mathcal{U}_0(\mathbf{e}^0_s)<v<0)),s>0\},
\end{equation}

To be more formal, denote by $\mathcal{M}$ the space of Poisson random measures over $(0,\infty)\times D(0,\infty)$. We define a Markovian kernel $\widehat{\kappa}$ from $(0,\infty)$ to $\mathcal{M}$ by 
\begin{equation} \label{kaphat}
\widehat{\kappa}(b,dY)=\mathbb{P} \left(\sum_{0< r < b} \delta_{(r,\mathbf{e}_r)} \in dY \right), \quad b>0, Y\in \mathcal{M},
\end{equation} that is, the law of a random variable taking values in $\mathcal{M}$, which is given by a Poisson random measure with intensity $dr 1_{\{r\in (0,b) \}} \overline{N}(\mathbf{e} \in d\omega)$. Analogously, we define a kernel $\kappa$ from $(-\infty,0)$ to $\mathcal{M}$ by 
\begin{equation} \label{kap}
\kappa(u,dZ)=\mathbb{P} \left(\sum_{u< v <0} \delta_{(v,\mathbf{e}_v)} \in dZ \right), \quad u<0, Z \in \mathcal{M},
\end{equation}
which corresponds to the law of a random variable on $\mathcal{M}$, which is a Poisson random measure of intensity $dv 1_{\{v\in (u,0) \}}\overline{N}(\mathbf{e} \in d\omega)$.

Observe that, since the set $\{(s,\mathbf{e}_s^0):0<s\leq c \}$ of excursions away from 0 form a Poisson point process, then the corresponding overshoots and undershoots $$\{(s,(\mathcal{O}_0(\mathbf{e}_s^0),\mathcal{U}_0(\mathbf{e}_s^0))):0<s\leq c \}$$ are also a Poisson point process, now with intensity $\widetilde{m}_\pm(ds,db,du):= ds \otimes N_0(\mathcal{O}_0 \in db,\mathcal{U}_0 \in du,\mathcal{E}_\pm)$. Therefore, the intensity of the marked process is given by $$m_\pm(ds,db,du,dY,dZ)=\widetilde{m}_\pm(ds,db,du) \widehat{\kappa}(b,dY) \kappa(u,dZ).$$ Let $M_\pm(ds,db,du,dY,dZ)$ be a Poisson random measure on $(0,\infty)^2 \times (-\infty,0) \times \mathcal{M}^2$ with intensity $m_\pm(ds,db,du,dY,dZ)$. By the above considerations, we can describe the local time process in terms of $M_\pm$. Moreover, since the generic Poisson random measures $Y$ and $Z$ can be written as $Y=\sum_{0<r<b} \delta_{(r,\underline{\mathbf{e}}_r)}$ and $Z=\sum_{u<v<0} \delta_{(v,\overline{\mathbf{e}}_v)}$, with an abuse of notation we can regard $M_\pm$ as a Poisson random measure over $(0,\infty)^2 \times (-\infty,0)\times ((0,\infty) \times D(0,\infty))^2$ with intensity $m_\pm(ds,db,du,dr,d\underline{\mathbf{e}},dv,d\overline{\mathbf{e}})$, where $(dr,d\underline{\mathbf{e}})$ and $(dv,d\overline{\mathbf{e}})$ are the atoms in \eqref{atoms}. In order to provide separate expressions for local times of positive and negative levels, we denote by $$M^1_\pm(ds,db,dr,d\mathbf{e})=M_\pm(ds,db,(-\infty,0),dr,d\mathbf{e},(0,\infty),D(0,\infty)),$$ the restriction of $M_\pm$ to the information on the overshoots and by $$M^2_\pm(ds,du,dv,d\mathbf{e})=M_\pm(ds,(0,\infty),du,(0,\infty),D(0,\infty),dv,d\mathbf{e}),$$ the marginal related to the undershoots.

Now, if we remove the condition that $X$ does not have a brownian component, we additionally have to deal with excursions belonging to $\mathcal{E}_+$ and $\mathcal{E}_-$. Let $M_+(ds,d\mathbf{e})$ and $M_-(ds,d\mathbf{e})$ be Poisson random measures on $(0,\infty)\times D(0,\infty)$ with intensities $ds N_0(d\mathbf{e},\mathcal{E}_+)$ and $ds N_0(d\mathbf{e},\mathcal{E}_-)$, respectively. Since $\mathcal{E}_+$, $\mathcal{E}_-$ and $\mathcal{E}_\pm$ form a partition, the measures $M_+$, $M_-$ and $M_\pm$ are independent. Hence, we obtain the following Poissonian representation for local times of positive and negative levels up to $\tau(c)$.

\begin{theorem} \label{PRov}
Let $X$ be a SNLP satisfying {\normalfont\textbf{(A)}} and {\normalfont\textbf{(B2)}}. Then, the following Poissonian representations hold:
\begin{equation} \label{PRovPos}
L_{\tau(c)}^y(X)=\int_0^c \int_{D(0,\infty)} L_{\zeta(\mathbf{e})}^{y}(\mathbf{e}) M_+(ds,d\mathbf{e}) + \int_0^c \int_0^\infty \int_0^{b\wedge y} \int_{D(0,\infty)} L_{\zeta(\mathbf{e})}^{y-r} \left(\mathbf{e} \right) M^1_\pm(ds,db,dr,d\mathbf{e})
\end{equation}
for all $y>0$, and
\begin{equation} \label{PRovNeg}
L_{\tau(c)}^z(X)=\int_0^c \int_{D(0,\infty)} L_{\zeta(\mathbf{e})}^{z}(\mathbf{e}) M_-(ds,d\mathbf{e}) + \int_0^c \int_{-\infty}^0 \int_{u \vee z}^{0} \int_{D(0,\infty)} L_{\zeta(\mathbf{e})}^{v-z} \left(\mathbf{e} \right) M^2_\pm(ds,du,dv,d\mathbf{e}),
\end{equation}
for all $z<0$.
\end{theorem}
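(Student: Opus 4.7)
The plan is to apply It\^o's excursion theorem at two nested levels: first to the excursions of $X$ away from $0$ up to local time $c$, and then, for each mixed excursion, to the sub-excursions away from the infimum of $\underleftarrow{\mathbf{e}}$ and away from the supremum of $\underrightarrow{\mathbf{e}}$. The representations (\ref{PRovPos})--(\ref{PRovNeg}) should then follow by additivity of local times together with a marking argument for Poisson point processes.

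Under \textbf{(A)} and \textbf{(B2)}, the family $\{(s,\mathbf{e}_s^0):0<s\leq c\}$ is a Poisson point process of intensity $ds\otimes N_0(d\mathbf{e})$, and additivity of local times across excursions reads $L_{\tau(c)}^y(X)=\sum_{0<s\leq c}L_{\zeta(\mathbf{e}_s^0)}^y(\mathbf{e}_s^0)$. Since $\mathcal{E}_+,\mathcal{E}_-,\mathcal{E}_\pm$ partition the support of $N_0$, the restrictions of this PPP to these three subsets are independent; the first two are distributed as $M_+$ and $M_-$ and, because excursions in $\mathcal{E}_+$ (resp.\ $\mathcal{E}_-$) vanish on $(-\infty,0)$ (resp.\ on $(0,\infty)$), they yield exactly the leading terms in (\ref{PRovPos}) and (\ref{PRovNeg}). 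For a generic excursion $\mathbf{e}\in\mathcal{E}_\pm$, I would invoke the strong Markov property at $\tau_0^-(\mathbf{e})$ together with the time-reversal description recalled in Section~\ref{srkt} to show that, conditionally on $(\mathcal{O}_0,\mathcal{U}_0)=(b,u)$, the pieces $\underleftarrow{\mathbf{e}}$ and $\underrightarrow{\mathbf{e}}$ are independent with laws $\widehat{\mathbb{E}}_b^0$ and $\mathbb{E}_u^0$ respectively. Applying It\^o's theorem to $\underleftarrow{\mathbf{e}}$ reflected at its infimum and using the duality identity $\underline{\widehat{N}}=\overline{N}$, the sub-excursions $\{(r,\underline{\mathbf{e}}_r):0<r<b\}$ indexed by the running infimum form a PPP whose law is the kernel $\widehat{\kappa}(b,dY)$ of (\ref{kaphat}); symmetrically, the sub-excursions $\{(v,\overline{\mathbf{e}}_v):u<v<0\}$ of $\underrightarrow{\mathbf{e}}$ away from its supremum are distributed as $\kappa(u,dZ)$ in (\ref{kap}).

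Marking each $\mathcal{E}_\pm$ atom $(s,\mathbf{e}_s^0)$ by the independent pair of Poisson random measures drawn from $\widehat{\kappa}(\mathcal{O}_0,\cdot)\otimes\kappa(\mathcal{U}_0,\cdot)$ and invoking the marking theorem for PPPs (\cite[Ch.~12]{kallenberg1997foundations}) then produces a Poisson random measure on $(0,\infty)^2\times(-\infty,0)\times\mathcal{M}^2$ of intensity $m_\pm$, which is precisely $M_\pm$; its overshoot and undershoot coordinates produce $M_\pm^1$ and $M_\pm^2$. A final application of additivity of local times inside $\underleftarrow{\mathbf{e}}$ gives, for $y>0$,
\[
L_{\zeta(\mathbf{e})}^y(\mathbf{e}) \;=\; \sum_{0<r<b} L_{\zeta(\underline{\mathbf{e}}_r)}^{\,y-r}(\underline{\mathbf{e}}_r)\,1_{\{r<y\}},
\]
since $\underline{\mathbf{e}}_r$ lives above level $r$ and therefore contributes to level $y$ of $X$ only through its own local time at height $y-r$; a symmetric identity for $z<0$ involves $\overline{\mathbf{e}}_v$ at height $v-z$. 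Summing these contributions over all $\mathcal{E}_\pm$ excursions through $M_\pm^1$ and $M_\pm^2$ reproduces the remaining integrals in (\ref{PRovPos}) and (\ref{PRovNeg}). The main obstacle I anticipate is the rigorous invocation of the marking theorem in a setting where the inner PPPs have index sets $(0,b)$ and $(u,0)$ depending on the random outer marks; the essential ingredient that makes this work is the conditional independence of $\underleftarrow{\mathbf{e}}$ and $\underrightarrow{\mathbf{e}}$ given $(\mathcal{O}_0,\mathcal{U}_0)$, which is precisely why the kernels $\widehat{\kappa}$ and $\kappa$ were designed as in (\ref{kaphat})--(\ref{kap}).
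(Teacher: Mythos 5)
Your proposal is correct but follows a genuinely different route from the paper's. The paper proves Theorem~\ref{PRov} by computing the Laplace transform of the right-hand side of~\eqref{PRovPos} via the exponential formula for Poisson random measures (separately for the $M_+$ and $M^1_\pm$ integrals, using independence of those measures), and then matching the result against the Laplace transform of $\int f(y)L^y_{\tau(c)}(X)dy$ established in Theorem~\ref{LevyMc}; this establishes the identity in distribution in a few lines. You instead propose the pathwise route: decompose $L^y_{\tau(c)}(X)$ over excursions away from $0$ via additivity and the occupation formula, split into $\mathcal{E}_+$, $\mathcal{E}_-$, $\mathcal{E}_\pm$, and for each $\mathcal{E}_\pm$ excursion further decompose $\underleftarrow{\mathbf{e}}$ (resp.\ $\underrightarrow{\mathbf{e}}$) into sub-excursions away from the infimum (resp.\ supremum), yielding the summation $L^y_{\zeta(\mathbf{e})}(\mathbf{e})=\sum_{0<r<b}L^{y-r}_{\zeta(\underline{\mathbf{e}}_r)}(\underline{\mathbf{e}}_r)1_{\{r<y\}}$ and the analogue for $z<0$. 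This is precisely the informal picture the paper sketches in the paragraphs preceding the theorem statement (where $M_\pm$ is constructed), so you are essentially making rigorous what the paper uses as motivation. Your route has the advantage of identifying the representation almost surely rather than only in law, and of making the branching structure transparent; the paper's route is shorter and leans on Theorem~\ref{LevyMc}, which has already done the heavy lifting via Lemma~\ref{ouN0}.

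One small caution on your final paragraph: to obtain the almost-sure identity you should invoke the \emph{mapping} theorem for Poisson point processes (pushing $\sum_s\delta_{(s,\mathbf{e}^0_s)}\big|_{\mathcal{E}_\pm}$ forward under the measurable map $\mathbf{e}\mapsto(\mathcal{O}_0,\mathcal{U}_0,Y,Z)$, where $Y$ and $Z$ are the actual sub-excursion decompositions of $\underleftarrow{\mathbf{e}}$ and $\underrightarrow{\mathbf{e}}$), not the \emph{marking} theorem, which would introduce fresh, independent draws from $\widehat{\kappa}(\mathcal{O}_0,\cdot)\otimes\kappa(\mathcal{U}_0,\cdot)$ and hence only give equality in distribution. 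The conditional independence of $\underleftarrow{\mathbf{e}}$ and $\underrightarrow{\mathbf{e}}$ given $(\mathcal{O}_0,\mathcal{U}_0)$ (strong Markov at $\tau_0^-$) is what makes the pushforward intensity factor as $N_0(\mathcal{O}_0\in db,\mathcal{U}_0\in du,\mathcal{E}_\pm)\,\widehat{\kappa}(b,dY)\,\kappa(u,dZ)$, so the ``obstacle'' you anticipate disappears once the argument is framed as a mapping rather than a marking.
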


\begin{rem}
Recall that in the case of continuous paths, the processes of local times $(L_{\tau(c)}^y(X),y \geq 0)$ and $(L_{\tau(c)}^z(X),z \leq 0)$ are independent without any conditioning, as in Theorem~\ref{RK2}. In our case that is not true but as a consequence of the above Poissonian construction, if we condition to the whole Poisson point process of overshoots and undershoots at 0, the independence is recovered (notice that $M_+$ and $M_-$ do not alter the conditional independence, since they are carried by disjoint sets of excursions).
\end{rem}

\section{First Ray-Knight theorem}\label{frkt}

As in the previous section, we start showing that local times up to $\tau_a^+$ are infinitely divisible and admit a Poissonian representation.

\begin{theorem}\label{PRtauA}
Let $X$ be a spectrally negative L\'evy process and denote by $\overline{N}$ the associated excursion measure away from zero for $S-X$. Assume $X$ satisfies hypothesis {\normalfont\textbf{(A)}} and {\normalfont\textbf{(B1)}} or {\normalfont\textbf{(B2)}}. Then, the local time process $(L_{\tau_a^+}^{a-z}(X),z\geq 0)$ is infinitely divisible and its L\'evy measure $\nu^{(a)}$ is given by $$\nu^{(a)}(d\omega)=\int_0^a \overline{N}(L_\zeta^{\cdot-s} 1_{\{\cdot-s >0\}} \in d\omega)ds,$$ where for each $s$, $\overline{N}(L_\zeta^{\cdot-s} 1_{\{\cdot-s >0\}} \in d\omega)$ is the image of $\overline{N}$ under the map that assigns to each excursion its local time process shifted by $s$. Moreover, local times admit the representation
\begin{equation}
L_{\tau_a^+}^{a-z}(X)=\begin{cases}
\int_0^z \int_{D(0,\infty)} \ell (z-s) K(ds,d\ell), & z\in [0,a] \\
\int_0^a \int_{D(0,\infty)} \ell (z-s) K(ds,d\ell), & z \geq a.
\end{cases}
\end{equation}
where $K$ is a Poisson random measure of intensity $ds\otimes M(d\ell)$, $M$ being the image of $\overline{N}$ under the map that associates an excursion $\mathbf{e}$ to its local time process up to its lifetime $\zeta$: $\mathbf{e}\mapsto \left(L_\zeta ^r (\mathbf{e}),r\in \mathbb{R} \right)$.
\end{theorem}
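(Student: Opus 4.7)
The plan is to express $L^{a-z}_{\tau_a^+}(X)$ as a sum over the Poisson point process of excursions of the reflected process $S-X$ away from zero, and then apply the exponential (Campbell) formula to deduce both the Poissonian representation and the infinite divisibility of the statement.

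The setup is the one recalled in Section~\ref{preliminaries}: under hypotheses \textbf{(A)} and \textbf{(B1)} or \textbf{(B2)}, the running supremum $S$ is a local time for $X$ at its maximum, its right-continuous inverse $(\tau^+_s, s\geq 0)$ is a subordinator, and the family $\{(s,\mathbf{e}_s): s>0,\ \mathbf{e}_s\neq \delta\}$ is a Poisson point process on $(0,\infty)\times \mathcal{E}$ with intensity $ds\otimes \overline{N}(d\mathbf{e})$. Observing that the first passage time of $X$ above $a$ coincides with the subordinator $\tau^+_s$ evaluated at $s=a$, and that (since $X$ has no positive jumps) it creeps upward with $X_{\tau_a^+}=a$ a.s., one concludes that the excursions of $S-X$ contributing to the trajectory on $[0,\tau_a^+]$ are exactly those indexed by $s\in(0,a]$.

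The key pathwise step is to establish, for $y\in\mathbb{R}$,
\[
L^y_{\tau_a^+}(X)=\sum_{0<s\leq a,\,s>y} L_\zeta^{s-y}(\mathbf{e}_s).
\]
On each excursion interval $[\tau^+_{s-},\tau^+_s]$ one has $S_t\equiv s$ and $X_t=s-\mathbf{e}_s(t-\tau^+_{s-})$, so the contribution of this excursion to the occupation measure of $X$ at level $y<s$ equals the occupation measure of the excursion $\mathbf{e}_s$ at level $s-y$ (translation invariance of Lebesgue measure). Invoking the occupation density formula \eqref{ltdef} on each interval and the additivity of the local time across disjoint time intervals produces the displayed identity.

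Setting $y=a-z$ and performing the measurable change of variables $s\mapsto a-s$, which preserves the Poisson structure by translation invariance of $ds$, rewrites the sum as an integral against a Poisson random measure on $(0,a]\times \mathcal{E}$. Pushing forward further through $\mathbf{e}\mapsto (L_\zeta^r(\mathbf{e}),r\in\mathbb{R})$ and applying the mapping theorem yields the Poisson random measure $K$ with intensity $ds\otimes M(d\ell)$ appearing in the statement, from which the Poissonian representation follows directly. Finally, for any measurable bounded $f\geq 0$, Campbell's formula gives
\[
\mathbb{E}\Bigl[e^{-\int f(z)L^{a-z}_{\tau_a^+}(X)dz}\Bigr]=\exp\Bigl\{-\int_0^a\!\!\int_\mathcal{E}\bigl(1-e^{-\int f(z)L_\zeta^{z-s}(\mathbf{e})1_{\{z>s\}}dz}\bigr)\overline{N}(d\mathbf{e})\,ds\Bigr\},
\]
which exhibits the process $(L^{a-z}_{\tau_a^+}(X),z\geq 0)$ as infinitely divisible and identifies its L\'evy measure as the image of $ds\otimes\overline{N}$ under $(s,\mathbf{e})\mapsto L_\zeta^{\cdot-s}(\mathbf{e})1_{\{\cdot-s>0\}}$, i.e.\ $\nu^{(a)}(d\omega)=\int_0^a \overline{N}(L_\zeta^{\cdot-s}1_{\{\cdot-s>0\}}\in d\omega)\,ds$. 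The main obstacle is justifying the pathwise identity \emph{jointly} in $y$ (not merely for Lebesgue-a.e.\ $y$): the occupation density formula naturally yields an equality in $L^1_{\textup{loc}}(dy)$, and one needs the uniform/$L^2$ version of the limit in \eqref{ltdef} available under \textbf{(A)}, together with a joint measurability argument for $(s,y)\mapsto L_\zeta^{s-y}(\mathbf{e}_s)$, to upgrade this to the measure-valued statement required for the Poissonian representation and the identification of $\nu^{(a)}$; once this is secured the remainder is a routine exercise with the exponential formula and the mapping theorem, entirely parallel to the argument behind Theorem~\ref{PRtauC}.
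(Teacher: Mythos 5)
Your proposal is correct and takes essentially the same route as the paper: decompose the path on $[0,\tau_a^+]$ into excursions of $S-X$ away from zero, use the occupation density formula excursion-by-excursion, perform the reflection $s\mapsto a-s$ using translation invariance of the intensity $ds$, and invoke the exponential (Campbell) formula. The one small divergence is that you read off infinite divisibility directly from the Poissonian representation (splitting $K$ into $n$ i.i.d.\ Poisson random measures by thinning), whereas the paper gives a separate, more probabilistic argument via the strong Markov property at $\tau_{a/n}^+$ and additivity of local times; both are valid, and the pathwise/a.e.\ issue you flag is handled in the paper exactly as you suggest, by right continuity of $x\mapsto L^x_t$.
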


This representation is similar to that in \cite[Ch.6]{li2020continuous} for CB processes with linear immigration but, as in the case of $\tau(c)$, $M$ cannot be a Kuznetsov measure. We also include here a description that reinforces the intuition on why local times, up to $\tau_a^+$, for instance, have a branching property (see also Figure~\ref{mplt} below). The local time of $X$ at each level can be decomposed as the sum of the local time contributions of these excursions to that level. We view the excursions from level $a$ downwards and think of them as individuals immigrating. The linear behavior of immigration comes from the fact that the supremum is linear on the local time scale. Since $S$ takes values on $[0,a]$, there is no extra immigrants for negative levels, which explains the difference in the representation in Theorem \ref{PRtauA} for levels in $[0,a]$ and levels in $(-\infty,0]$. For the branching part, if $0<x<y$, an excursion which has a contribution to the local time at $a-x$ will also have a contribution at $a-y$ if the excursion is deep enough. So, we can interpret the contribution of an excursion to $L_{\tau_a^+}^{a-y}(X)$ as a ``mass'' coming from that of $L_{\tau_a^+}^{a-x}(X)$.

\begin{figure}[htb]
\begin{center}
\includegraphics[width=0.6\textwidth]{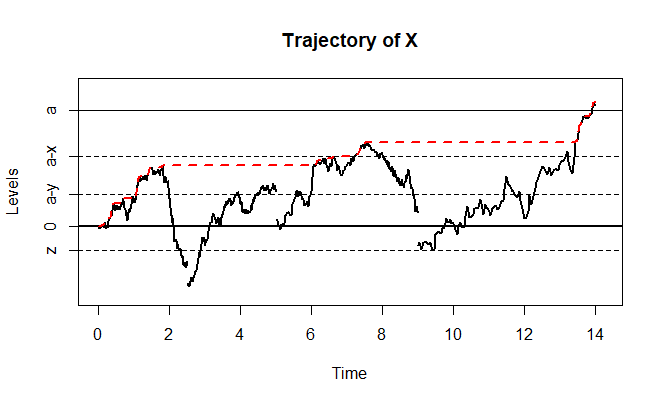}
\caption{\textit{\small A path of $X$ (bold line) and its supremum (dashed line) up to the time it surpasses level $a$.}}
\label{mplt}
\end{center}
\end{figure}
\FloatBarrier

We can get a better understanding of the L\'evy measure $\nu^{(a)}$ by studying the law of local times under $\overline{N}$. This turns out to be difficult if one wants to consider all the levels but it is tractable when considering a positive reference level $x>0$, which can be made arbitrarily small. 

Given $x>0$, excursions with height $H<x$ have local time equal to zero at levels $y \geq x$. Hence, we will restrict ourselves to those excursions satisfying $H>x$. On the event $H>x$, we know that, by the absence of negative jumps under $\overline{N}$, the first hitting time $T_x$ occurs before the lifetime $\zeta$. Then, by the strong Markov property and the additivity of local times we can write $$L_\zeta ^y = L_{T_x}^y + L_\zeta ^{y} \circ \theta_{T_x}, \quad y>x,$$ which splits the local time process for levels $y>x$ into two independent components: the information before hitting $x$ and the information after, the latter being able to be written in terms of the excursions away from $x$. Both terms can be described in a Poissonian way, similar to that in previous section, as we will see next.

We begin with $(L_{T_x}^y,y>x)$. We only need to consider the event $\tau_x^+<T_x<\zeta$, since on the event $\tau_x^+=T_x$ an excursion has local time zero at levels bigger than $x$. Therefore, the first overshoot of the excursion at $x$ will play a key role (see Lemma~\ref{ovN1} for its law under $\overline{N}$) and conditionally on it, we can use a similar decomposition of the path into excursions away from the infimum as in the previous section. This yields the following theorem.

\begin{theorem} \label{PR1Tx}
For any measurable and non-negative functional $F$, we have that
\begin{align*}
\overline{N}(F(L_{T_x}^y,y>x),H>x)&= \int_0^\infty \overline{N}(\mathcal{O}_x \in db, \tau_x^+<T_x<\zeta) \widehat{\kappa}(b, F(L_\zeta^{y-x},y>x)),
\end{align*}
with $\widehat{\kappa}$ as in \eqref{kaphat}. In particular, for any $f:(x,\infty) \to \mathbb{R}_+$ measurable and bounded,
\begin{align*}
&\overline{N} \left(\exp\left\{-\int_x^\infty f(y) L_{T_x}^y dy \right\} , H>x \right)\\
& = \frac1{c_-}\int_0^x d\ell \int_{(0,\infty)}  \widehat{\Pi}(db+\ell) W(x-\ell) \left(\frac{W^{\prime}(x)}{W(x)}-\frac{W^{\prime}(x-\ell)}{W(x-\ell)}\right) \mathcal{W}_{f_{x,b}}(b),
\end{align*}
where $\widehat{\Pi}$ is given by $\widehat{\Pi}(A)=\Pi(-A)$, $f_{x,b}(z):=f(x+b-z), \ z<b$ and $\mathcal{W}_{f_{x,b}}$ as in \eqref{newScaleF}.
\end{theorem}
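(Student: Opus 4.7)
The plan is to decompose a generic excursion of $\overline{N}$ at the first passage time $\tau_x^+$ above level $x$, apply the strong Markov property of $\overline{N}$ there, and then describe the post-$\tau_x^+$ segment via a Poisson point process of excursions above the infimum. On the event $\{H>x\}=\{\tau_x^+<\zeta\}$, the path either creeps continuously up to $x$ (so $\tau_x^+=T_x$, $\mathcal{O}_x=0$ and $L_{T_x}^y=0$ for all $y>x$) or jumps strictly above $x$ (so $\mathcal{O}_x=b>0$ and $\tau_x^+<T_x<\zeta$). The creeping contribution is consistent with the degenerate kernel $\widehat{\kappa}(0,\cdot)=\delta_{\emptyset}$, whose empty PPP yields identically vanishing local times; hence attention can be restricted to the jumping piece.

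On $\{\tau_x^+<T_x<\zeta\}$ with $\mathcal{O}_x=b$, I would invoke the strong Markov property of $\overline{N}$ at $\tau_x^+$: conditionally on the past and on $\mathcal{O}_x=b$, the segment $(\mathbf{e}(\tau_x^++t))_{0\leq t\leq T_x-\tau_x^+}$ is distributed as the dual spectrally positive process $\widehat{X}$ starting from $b$ (after shifting all levels down by $x$) and killed at its first hitting of $0$. Since $\widehat{X}$ has no negative jumps, its running infimum $\widehat{I}$ decreases continuously from $b$ to $0$ on this segment, and the excursions of $\widehat{X}-\widehat{I}$ above the infimum form a Poisson point process on $(0,b)\times D(0,\infty)$ with intensity $dr\otimes\overline{N}(d\mathbf{e})$, thanks to the duality identity $\underline{\widehat{N}}=\overline{N}$ recalled in the paper. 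This PPP has law $\widehat{\kappa}(b,\cdot)$, and by additivity of local times across these excursions (combined with a reparameterization $r\mapsto b-r$ that preserves the intensity), the process $(L_{T_x}^y(\mathbf{e}),y>x)$ coincides with the corresponding functional $(L_\zeta^{y-x},y>x)$ of the PPP. Taking conditional expectations and integrating against $\overline{N}(\mathcal{O}_x\in db,\tau_x^+<T_x<\zeta)$ then produces the first identity.

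For the explicit Laplace-transform formula, I will specialize to $F(\omega)=\exp\{-\int_x^\infty f(y)\omega(y)\,dy\}$. The exponential formula for the Poisson point process under $\widehat{\kappa}(b,\cdot)$ gives
$$\widehat{\kappa}\left(b,\,e^{-\int_x^\infty f(y)L_\zeta^{y-x}dy}\right)=\exp\left\{-\int_0^b dr\,\overline{N}\left(1-e^{-\int_x^\infty f(y)L_\zeta^{y-x-r}dy}\right)\right\},$$
and after the substitution $y'=y-x-r$ in the inner integral, the occupation density formula $\int_{\mathbb{R}}g(y')L_\zeta^{y'}dy'=\int_0^\zeta g(\mathbf{e}(u))du$, and the change of variable $s=b-r$ in the outer integral, this rearranges precisely into $\mathcal{W}_{f_{x,b}}(b)$ with $f_{x,b}(z):=f(x+b-z)$, in the notation of \eqref{newScaleF}. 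It then remains to invoke Lemma~\ref{ovN1}, which expresses the joint law of the undershoot at $x$ and $\mathcal{O}_x$ under $\overline{N}$ restricted to $\{\tau_x^+<T_x<\zeta\}$ in terms of $\widehat{\Pi}$ and the scale functions $W,W'$; integrating out the undershoot variable and inserting $\mathcal{W}_{f_{x,b}}(b)$ yields the closed form. The main obstacle will be to justify rigorously the strong Markov property at $\tau_x^+$ under the $\sigma$-finite excursion measure $\overline{N}$, together with the identification $\underline{\widehat{N}}=\overline{N}$ and the bookkeeping of the parameter conventions relating excursions of $\widehat{X}$ above its infimum to the Poisson point process in $\widehat{\kappa}$.
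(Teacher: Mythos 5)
Your proposal follows the paper's proof step for step: decompose on $\{H>x\}$ at $\tau_x^+$ into the creeping and jump pieces, apply the strong Markov property at $\tau_x^+$ to identify the post-jump segment with $\widehat{X}$ issued from $x+\mathcal{O}_x$ and killed at its first hitting time of $x$, decompose that into excursions above the infimum via $\underline{\widehat{N}}=\overline{N}$ (yielding $\widehat{\kappa}$), and then for the explicit formula apply the exponential formula together with Lemma~\ref{ovN1}. The only cosmetic difference is that the paper first dualizes $\widehat{\mathbb{E}}^x_{x+b}$ to $\mathbb{E}_0$ before applying the exponential formula for excursions away from the supremum, whereas you apply the exponential formula directly to the Poisson point process under $\widehat{\kappa}(b,\cdot)$; after the change of variables $s=b-r$ that you flag, the two computations coincide.
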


Now focus on $(L_{\zeta}^y \circ \theta_{T_x},y>x)$. Starting from time $T_x$, we can decompose the path into excursions away from $x$. This bears some similarities with the situation of the second Ray-Knight Theorem, in which we had ``$c$'' excursions away from 0, but in this case, we have as many excursions away from $x$ as the local time $L_\zeta^x$, which can be proved that is exponentially distributed with parameter $q_x^0:=\widehat{N}_x (\tau_0^- <\zeta)$. This and the distribution of the overshoots of the excursions away from $x$ (see Lemma~\ref{ovNx}) lead to the following theorem. 

\begin{theorem} \label{PR1zeta}
For any measurable and non-negative function $f$, we have that
\begin{align} \label{brP}
&\overline{N}\left(\exp\left\{-\int_x^\infty f(y)L_{\zeta}^y \circ \theta_{T_x} dy  \right\},H>x \right) \nonumber \\
&= \overline{N}(H>x) \widehat{\mathbb{E}}_x^0 \left[\exp\left\{-L_\zeta^x \widehat{N}_x \left(1-e^{-\int_x^\infty f(y)L_\zeta^y dy}, \tau_0^- >\zeta \right) \right\} \right].
\end{align}
The term under $\widehat{N}_x$ can be further decomposed into $$\widehat{N}_x \left(1-e^{-\int_x^\infty f(y)L_\zeta^y dy}, \tau_0^- >\zeta \right)=\widehat{N}_x \left(1-e^{-\int_x^\infty f(y)L_\zeta^y dy}, \mathcal{E}_+^x \right)+\widehat{N}_x \left(1-e^{-\int_x^\infty f(y)L_\zeta^y dy}, \tau_0^- >\zeta, \mathcal{E}_\pm^x \right),$$ where $\mathcal{E}_+^x$ is the set of excursions which are completely above $x$ and $\mathcal{E}_\pm^x$ are the excursions away from $x$ which start negative and then jump above $x$. For this latter set, we have the following expression in terms of the law of the overshoot:
\begin{align*}
&\widehat{N}_x \left(1-e^{-\int_x^\infty f(y)L_\zeta^y dy}, \tau_0^- >\zeta, \mathcal{E}_\pm^x \right)\\
& \quad =\int_{(0,\infty)} \widehat{N}_x(\mathcal{O}_x \in db, \tau_0^- >\zeta, \mathcal{E}_\pm ^x) \left[ 1-\exp\left\{-\int_0^b ds \overline{N} \left(1-e^{-\int_{x}^\infty f(y) L_\zeta ^{y-x-s} 1_{\{y-s>x \}}} \right) \right\}\right] \\
& \quad =\int_{(0,\infty)} \widehat{N}_x(\mathcal{O}_x \in db, \tau_0^- >\zeta, \mathcal{E}_\pm ^x) \left[ 1-\mathcal{W}_{f_{x,b}}(b)\right],
\end{align*}
where $f_{x,b}(z)=f(x+b-z), \ z<b$ and $\mathcal{W}_{f_{x,b}}$ as in \eqref{newScaleF}.
\end{theorem}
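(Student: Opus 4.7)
The plan has three stages: (i) reduce to the dual process by a strong Markov argument under $\overline{N}$ at $T_x$; (ii) apply excursion theory for $\widehat{X}$ at level $x$ together with a geometric observation that rules out any contribution from the killing excursion; (iii) decompose the $\mathcal{E}_\pm^x$ piece using the overshoot $\mathcal{O}_x$ and Proposition~\ref{intBarN}.

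For Step (i), note that on $\{H>x\}$ one has $T_x<\zeta$, and that the strong Markov property for $\overline{N}$ at $T_x$---together with the identification of $S-X$ above $0$ with $\widehat{X}=-X$---yields that the shifted path $\mathbf{e}(T_x+\cdot)$ is distributed as $\widehat{X}$ started from $x$ and killed upon hitting $0$. Consequently,
\[
\overline{N}\Big(\exp\Big\{-\int_x^\infty f(y) L_\zeta^y \circ \theta_{T_x}\, dy\Big\}, H>x\Big) = \overline{N}(H>x)\,\widehat{\mathbb{E}}_x^{0}\Big[\exp\Big\{-\int_x^\infty f(y) L_\zeta^y\, dy\Big\}\Big].
\]
For Step (ii), decompose the path under $\widehat{\mathbb{P}}_x^{0}$ into excursions of $\widehat{X}$ away from $x$ with intensity $\widehat{N}_x$. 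Standard excursion theory gives that $L_\zeta^x$ is exponential with parameter $q_x^0=\widehat{N}_x(\tau_0^-<\zeta)$, and that, conditional on $L_\zeta^x$, the survival excursions (with $\tau_0^->\zeta$) form on $[0,L_\zeta^x]$ an independent Poisson point process of intensity $1_{\{\tau_0^->\zeta\}}\widehat{N}_x$, while the unique excursion with $\tau_0^-<\zeta$ is the killing one. The crucial observation is that this killing excursion, truncated at the killing instant, contributes nothing to $L_\zeta^y$ for $y>x$: since $\widehat{N}_x$-excursions end upon returning to $x$ and $\widehat{X}$ is spectrally positive, any excursion that reaches $0$ before returning to $x$ must have stayed below $x$ throughout its truncated life (otherwise visiting some $y>x$ would have forced a prior return to $x$, ending the excursion). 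The exponential formula for Poisson point processes, applied conditionally on $L_\zeta^x$ and then averaged, delivers the first equality of the theorem.

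For Step (iii), split $\widehat{N}_x$ along $\mathcal{E}_+^x\sqcup\mathcal{E}_-^x\sqcup\mathcal{E}_\pm^x$: excursions in $\mathcal{E}_-^x$ have $L_\zeta^y=0$ for $y>x$ and drop out of $\widehat{N}_x(1-e^{-\int f L_\zeta^y dy},\tau_0^->\zeta)$, while those in $\mathcal{E}_+^x$ automatically satisfy $\tau_0^->\zeta$; this produces the stated two-term decomposition. On $\mathcal{E}_\pm^x\cap\{\tau_0^->\zeta\}$, condition on $\mathcal{O}_x=b$: the pre-jump phase lies in $(-\infty,x)$ and contributes nothing, and by the strong Markov property at the up-jump time $\tau_x^+$ the post-jump piece is an independent copy of $\widehat{X}$ started from $x+b$ and killed upon hitting $x$. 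After the spatial shift $\widetilde{X}=\widehat{X}-x$ and the occupation density formula, the associated Laplace functional equals $\widehat{\mathbb{E}}_b\big[\exp\{-\int_0^{\tau_0^-} f_{x,b}(b-\widetilde{X}_s)\, ds\}\big]=G_{f_{x,b}}(b)$, which by Proposition~\ref{intBarN} is exactly $\mathcal{W}_{f_{x,b}}(b)$. Integrating against $\widehat{N}_x(\mathcal{O}_x\in db,\tau_0^->\zeta,\mathcal{E}_\pm^x)$ concludes.

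The delicate step is the geometric claim in Step (ii)---that the killing excursion is invisible to $L_\zeta^y$ for $y>x$---which relies crucially on the absence of negative jumps for $\widehat{X}$ and on the convention that excursions terminate upon return to the base point; it is precisely what allows the exponential formula to be invoked without a residual contribution from the last, truncated excursion. The remaining ingredients are routine applications of the strong Markov property, the occupation density formula, and Proposition~\ref{intBarN}.
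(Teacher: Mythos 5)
Your proof is correct and follows essentially the same route as the paper's: strong Markov property under $\overline{N}$ at $T_x$, the exponential law of $L_\zeta^x$ under $\widehat{\mathbb{E}}_x^0$ with parameter $\widehat{N}_x(\tau_0^-<\zeta)$, the exponential formula applied conditionally on $L_\zeta^x$ to the survival excursions, and the three-way split $\mathcal{E}_+^x\sqcup\mathcal{E}_-^x\sqcup\mathcal{E}_\pm^x$ followed by overshoot conditioning and Proposition~\ref{intBarN}. The one genuine service you add is making explicit the geometric reason the killing excursion contributes nothing to $L_\zeta^y$ for $y>x$ (the paper silently ignores it); conversely, you cite the exponential law of $L_\zeta^x$ as standard where the paper supplies a compensation-formula proof, and you reach $\mathcal{W}_{f_{x,b}}(b)$ directly from Proposition~\ref{intBarN} rather than re-deriving the excursion-away-from-the-infimum decomposition as in the proof of Theorem~\ref{PR1Tx}; both shortcuts are legitimate.
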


\begin{rem}
Observe that identity \eqref{brP} is reminiscent of the branching property. Indeed, we can rewrite it as $$\frac{\overline{N}\left(\exp\left\{-\int_x^\infty f(y)L_{\zeta}^y \circ \theta_{T_x} dy  \right\},H>x \right)}{\overline{N}(H>x)}=  \widehat{\mathbb{E}}_x^0 \left[\exp\left\{-L_\zeta^x \widehat{N}_x \left(1-e^{-\int_x^\infty f(y)L_\zeta^y dy}, \tau_0^- >\zeta \right) \right\} \right].$$ The left hand side can be seen as an expected value conditioned to $H>x$, which is equivalent to having a positive amount of local time at $x$. And on the right hand side, we have that the contribution to the local time of each level $y$ is coming from the $L_\zeta^x$ ``individuals'' present at level $x$. In this case, the term corresponding to $\widehat{N}_x$ can be interpreted as the corresponding cumulant. Actually, if we consider a single level $y>x$, the Proposition~\ref{ovN3} below provides an explicit expression for this cumulant.
\end{rem}

It turns out that, without splitting the information at time $T_x$ for local times of levels bigger than $x$, a similar branching property is satisfied. Actually, we can consider a functional of all levels above $x$ as before or just consider a finite set of points, as states the following proposition. 

\begin{proposition} \label{Prop1}
Let $x>0$ and $f:(x,\infty) \to \mathbb{R}_+$ a measurable and bounded function. Denote by $$u_{x}(f):=\widehat{N}_x\left(1-e^{-\int_x^\infty f(y) L_\zeta^y dy},\tau_0^- >\zeta\right)={N}_{0}\left(1-e^{-\int_x^\infty f(y) L_\zeta^{x-y} dy},\tau_{x}^{+} >\zeta\right).$$ Then, $$\overline{N} \left(1-e^{-\lambda L_\zeta ^x-\int_x^\infty f(y) L_\zeta^y dy} \right)=\overline{N} \left(1-e^{-(\lambda+u_{x}(f)) L_{\zeta}^x - \int_x^\infty f(y) L_{T_x}^y dy} \right),\quad \lambda \geq 0.$$ Alternatively, if $0<x<y_1,\dots,y_n$ are $n$ distinct points and $$u_{x;y_1,\dots,y_n}(\beta_1,\dots,\beta_n):= \widehat{N}_x\left(1-e^{-\beta_1 L_\zeta ^{y_1}-\cdots - \beta_n L_\zeta^{y_n}},\tau_0^- >\zeta\right),$$ then, for any $\lambda,\beta_1,\dots,\beta_n \geq 0$, $$\overline{N} \left(1-e^{-\lambda L_\zeta ^x-\beta_1 L_\zeta ^{y_1}-\cdots - \beta_n L_\zeta^{y_n}} \right)=\overline{N} \left(1-e^{-(\lambda+u_{x;y_1,\dots,y_n}(\beta_1,\dots,\beta_n)) L_{\zeta}^x -\beta_1 L_{T_x}^{y_1}-\cdots -\beta_n L_{T_x}^{y_n} } \right).$$
\end{proposition}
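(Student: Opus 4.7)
The plan is to reduce Proposition~\ref{Prop1} to the branching-type identity of Theorem~\ref{PR1zeta} by combining additivity of local times with the strong Markov property of $\overline{N}$ at $T_x$. First I would observe that on $\{H \leq x\}$ both $L_\zeta^x$ and $L_\zeta^y$ for $y > x$ vanish, so each side of the first identity is an $\overline{N}$-integral over $\{H > x\}$. On this event $T_x < \zeta$, $L_{T_x}^x = 0$, hence $L_\zeta^x = L_\zeta^x \circ \theta_{T_x}$, and for $y > x$ additivity gives $L_\zeta^y = L_{T_x}^y + L_\zeta^y \circ \theta_{T_x}$, with $L_{T_x}^y$ being $\mathcal{F}_{T_x}$-measurable.

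Pulling the $\mathcal{F}_{T_x}$-measurable factor $e^{-\int_x^\infty f(y) L_{T_x}^y dy}$ outside and applying strong Markov at $T_x$ (the post-$T_x$ trajectory under $\overline{N}(\,\cdot\,,H>x)$ has law $\widehat{\mathbb{P}}_x^0$) transforms the LHS into
\begin{equation*}
\overline{N}\Bigl(e^{-\int_x^\infty f(y) L_{T_x}^y dy}\, \widehat{\mathbb{E}}_x^0\bigl[e^{-\lambda L_\zeta^x - \int_x^\infty f(y) L_\zeta^y dy}\bigr], H > x\Bigr).
\end{equation*}
The crux is then to establish the identity
\begin{equation*}
\widehat{\mathbb{E}}_x^0\bigl[e^{-\lambda L_\zeta^x - \int_x^\infty f(y) L_\zeta^y dy}\bigr] = \widehat{\mathbb{E}}_x^0\bigl[e^{-(\lambda + u_x(f)) L_\zeta^x}\bigr],
\end{equation*}
which is precisely the Poissonian computation underlying Theorem~\ref{PR1zeta}: under $\widehat{\mathbb{P}}_x^0$, $L_\zeta^x$ is exponentially distributed with rate $q_x^0 = \widehat{N}_x(\tau_0^- < \zeta)$, and conditionally on $L_\zeta^x = t$ the excursions away from $x$ that do not hit $0$ form a Poisson point process on $[0,t]$ with intensity $\widehat{N}_x(\,\cdot\,,\tau_0^- > \zeta)$. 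Since these are exactly the excursions contributing to $L_\zeta^y$ for $y > x$, the exponential formula yields $\widehat{\mathbb{E}}_x^0[e^{-\int_x^\infty f(y) L_\zeta^y dy} \mid L_\zeta^x = t] = e^{-t\, u_x(f)}$, and conditioning gives the stated identity.

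Because $\lambda + u_x(f)$ is deterministic, reversing the strong Markov step reabsorbs the inner $\widehat{\mathbb{E}}_x^0$-factor back into $\overline{N}$ through $L_\zeta^x = L_\zeta^x \circ \theta_{T_x}$, yielding
\begin{equation*}
\overline{N}(e^{-\lambda L_\zeta^x - \int_x^\infty f(y) L_\zeta^y dy}, H > x) = \overline{N}(e^{-(\lambda + u_x(f)) L_\zeta^x - \int_x^\infty f(y) L_{T_x}^y dy}, H > x),
\end{equation*}
after which subtracting both sides from $\overline{N}(H > x) < \infty$ and restoring the contribution of $\{H \leq x\}$ (where the integrands vanish) delivers the first identity. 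The finite-points version is proved identically, replacing $\int_x^\infty f(y) L_\zeta^y dy$ by $\sum_{i=1}^n \beta_i L_\zeta^{y_i}$ throughout; the conditional exponential identity becomes $\widehat{\mathbb{E}}_x^0[e^{-\sum_i \beta_i L_\zeta^{y_i}} \mid L_\zeta^x = t] = e^{-t\, u_{x;y_1,\dots,y_n}(\beta_1,\dots,\beta_n)}$, and no other step changes. I expect the only delicate point to be the rigorous justification of the conditional exponential identity under $\widehat{\mathbb{P}}_x^0$, but this is exactly the excursion-theoretic ingredient already set up for Theorem~\ref{PR1zeta}, so the present proposition amounts to invoking that machinery with the extra factor $e^{-\lambda L_\zeta^x}$ folded into the $L_\zeta^x$-exponent.
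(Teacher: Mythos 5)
Your proposal is correct and follows essentially the same route as the paper: restrict to $\{H>x\}$, apply the strong Markov property under $\overline{N}$ at time $T_x$ using $L^x_{T_x}=0$ and additivity of $L^y$ for $y>x$, evaluate the post-$T_x$ Laplace transform via the exponential law of $L_\zeta^x$ under $\widehat{\mathbb{P}}_x^0$ together with the Poissonian decomposition of surviving excursions away from $x$, and reverse the Markov step. The only cosmetic difference is that you manipulate the Laplace functionals $e^{-\cdots}$ directly and subtract from $\overline{N}(H>x)$ at the end, whereas the paper carries the $1-e^{-\cdots}$ form through the algebraic splitting $1-ab=(1-a)+a(1-b)$; the key conditional-exponential identity and the thinning argument behind it are identical to those in the proof of Theorem~\ref{PR1zeta}, as you note.
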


We end this section with explicit expressions of the law of the local time of a single point under $\overline{N}$ and $\widehat{N}_x$, which can be given in terms of scale functions.

\begin{proposition}\label{ovN2}
Let $$u_y(\lambda):=\overline{N}\left(1-e^{-\lambda L^{y}_{\zeta}}\right),\qquad y> 0, \ \lambda\geq 0.$$
This quantity can be expressed in terms of $W$ and the constant $c_+$ as follows
\begin{equation*}
u_y(\lambda)=\frac{\lambda W'(y)}{c_+ +\lambda W(y)},\qquad \lambda\geq 0, y\geq 0.
\end{equation*}
\end{proposition}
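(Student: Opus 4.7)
The plan is to compute $u_y(\lambda)$ by combining the Poissonian representation of Theorem~\ref{PRtauA} with a strong Markov decomposition of $\overline{N}$.

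First, I would establish the identity
\[
u_y(\lambda) \;=\; \overline{N}(H>y)\,\bigl(1 - \mathbb{E}_{0}[e^{-\lambda L_{\tau_y^+}^0(X)}]\bigr) \qquad (\ast)
\]
by a strong Markov argument under $\overline{N}$ at the first hitting time $T_y$ of level $y$. Since $L_\zeta^y=0$ on $\{H\le y\}$, only $\{T_y<\zeta\}$ contributes, and under $\overline{N}(\,\cdot\,\mid T_y<\zeta)$ the post-$T_y$ path is distributed as the dual $\hat X$ started from $\mathbf{e}(T_y)\ge y$ and killed at $\tau_0^-$. Because $\hat X$ has only positive jumps, from any overshoot position $z\ge y$ it must creep down through $y$ before accumulating any further local time at $y$; hence the law of $L_{\tau_0^-}^y(\hat X)$ under $\hat{\mathbb{E}}_z^{0}$ coincides with its law under $\hat{\mathbb{E}}_y^{0}$, independently of the overshoot. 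By translation and duality, the latter equals in law $L_{\tau_y^+}^0(X)$ for $X$ starting at $0$, which yields $(\ast)$.

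Second, Theorem~\ref{PRtauA} applied with $a=y$ and level $a-z=0$ (i.e., $z=a$) gives
\[
\phi(y) \;:=\; \mathbb{E}_0\bigl[e^{-\lambda L_{\tau_y^+}^0(X)}\bigr] \;=\; \exp\Bigl(-\int_0^y u_r(\lambda)\,dr\Bigr),
\]
so that $\phi'(y) = -u_y(\lambda)\,\phi(y)$. Substituting into $(\ast)$ and invoking the classical fluctuation identity $\overline{N}(H>y)=W'(y)/W(y)$ yields the separable ODE
\[
\frac{\phi'(y)}{\phi(y)\,(1-\phi(y))} \;=\; -\frac{W'(y)}{W(y)},
\]
which integrates via partial fractions to $\log(\phi/(1-\phi)) = -\log W(y) + \mathrm{const}$, hence $\phi(y) = c_+/(c_+ + \lambda W(y))$ for a constant $c_+>0$ depending only on $X$ (identifiable with the normalization constant appearing in Lemma~\ref{ouN0}). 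Differentiating $u_y(\lambda) = -\phi'(y)/\phi(y)$ then delivers the desired expression
\[
u_y(\lambda) \;=\; \frac{\lambda\,W'(y)}{c_+ + \lambda W(y)}.
\]

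The main obstacle will be step~$(\ast)$, specifically the strong Markov decomposition under $\overline{N}$ together with the overshoot-irrelevance observation; the latter is a genuine feature of SNLP (via the creeping of $\hat X$) rather than of Brownian motion, and is precisely what makes the scale function $W$ appear in the answer. The auxiliary identity $\overline{N}(H>y) = W'(y)/W(y)$ is a standard SNLP fluctuation identity but must be verified in the normalization of $\overline{N}$ used in this paper.
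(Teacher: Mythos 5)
Your step $(\ast)$ is the same starting point as the paper's (their first display), though your justification is slightly muddled: at the first \emph{hitting} time $T_y$ the excursion $\mathbf{e}$ equals $y$ exactly, so there is no overshoot to worry about; the observation you call ``overshoot-irrelevance'' would matter only if you were applying the strong Markov property at the passage time $\tau_y^+$, not at $T_y$. The conclusion $(\ast)$ is nonetheless correct and coincides with the paper's $u_y(\lambda)=\overline{N}(H>y)\bigl(1-\widehat{\mathbb{E}}_0[e^{-\lambda L^0_{\tau_{-y}^-}}]\bigr)$.

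Where your route genuinely diverges is in the second half. The paper determines $\phi(y):=\mathbb{E}_0[e^{-\lambda L^0_{\tau_y^+}}]$ directly by noting that $L^0_{\tau_{-y}^-}$ is exponential under $\widehat{\mathbb{E}}_0$ with parameter $\widehat{N}_0(\tau^-_{-y}<\zeta)=N_0(\tau_y^+<\tau_0^-)=c_+/W(y)$, from which $\phi(y)=c_+/(c_++\lambda W(y))$ falls out immediately. You instead extract from Theorem~\ref{PRtauA} the relation $\phi(y)=\exp\bigl(-\int_0^y u_r(\lambda)\,dr\bigr)$ and turn $(\ast)$ into a separable ODE. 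The integration is fine as far as it goes, but it stops short: from $\log\bigl(\phi/(1-\phi)\bigr)=-\log W(y)+K(\lambda)$ you only get $\phi(y)=e^{K(\lambda)}/\bigl(e^{K(\lambda)}+W(y)\bigr)$, and the constant of integration $e^{K(\lambda)}$ must be $c_+/\lambda$, not simply $c_+$; your statement that it is ``a constant $c_+$ depending only on $X$'' quietly drops the required $\lambda$-dependence, and in fact if $e^{K}$ were independent of $\lambda$ the final formula could not hold. To pin down $e^{K(\lambda)}=c_+/\lambda$ you need an extra input -- e.g.\ the small-$\lambda$ expansion $1-\phi(y)\sim\lambda\,\mathbb{E}_0[L^0_{\tau_y^+}]$ together with $\mathbb{E}_0[L^0_{\tau_y^+}]=W(y)/c_+$, but that identity is equivalent to the exponential-rate identity $\widehat N_0(\tau^-_{-y}<\zeta)=c_+/W(y)$ that the paper uses; so the ODE detour does not actually buy independence from that fluctuation input. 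Finally, a small attribution slip: $c_+$ is the constant introduced in Lemma~\ref{ovNx} (and used again in the proof of Proposition~\ref{ovN3}), not Lemma~\ref{ouN0}.
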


\begin{proposition}\label{ovN3}
Let us define, for any $x>0$,
$$v_{x,y}(\lambda):=\widehat{N}_{x}\left(\left(1-e^{-\lambda L^{y}_{\zeta}}\right)1_{\{\tau^{-}_{0}>\zeta\}}\right),\qquad y\geq x, \ \lambda\geq 0.$$ Then,
\begin{align*}
v_{x,y}(\lambda)&=\widehat{N}_x^0(H>y)\left(\frac{\lambda W(y-x)}{c_+ +\lambda W(y-x)}\right),\qquad \lambda\geq 0, y\geq x, 
\end{align*}
where $\widehat{N}_x^0(\cdot):=\widehat{N}_x\left(\cdot;\tau_0^->\zeta \right)$. Moreover,
\begin{align*}
&\widehat{N}_x^0(H>y)= \frac{c_-\sigma^2}{2}\frac{W'(y-x)}{W(y-x)} \\
&\ +\int_0^x \left( 1-c_+ + \frac{c_+ W(x-z)}{W(x)} \right) \left(\Pi(-\infty,-z)-\int_{-z-(y-x)}^{-z} \frac{W(u+z+y-x)}{W(y-x)}\Pi(du)\right)dz,
\end{align*}
where $c_+$ and $c_-$ are the same constants depending on the normalization of the local times at the supremum and infimum as before, respectively.
\end{proposition}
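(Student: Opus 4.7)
The plan is to split the proof into two pieces, matching the two formulas in the statement: first the Laplace transform factorization, then the explicit expression for $\widehat{N}_x^0(H>y)$.

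For the Laplace transform factorization, I first argue that under $\widehat{N}_x$ restricted to $\{\tau_0^->\zeta\}$ with $y\geq x>0$, the event $\{L_\zeta^y>0\}$ forces the excursion to reach $y$. Because $\widehat{X}$ has no negative jumps, any excursion from $x$ that reaches $y>x$ must first go above $x$ and then remains above $x$ (hence above 0) until returning to $x$, which is the end of the excursion. Setting $T_y=\inf\{t:\widehat{X}_t=y\}$, I apply the strong Markov property under $\widehat{N}_x$ at $T_y$: the post-$T_y$ trajectory is distributed as $\widehat{X}$ issued from $y$ and run until it first returns to $x$, which by creeping coincides with $T_x^-=\inf\{t:\widehat{X}_t<x\}$. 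Using $L_\zeta^y=L^y_{T_x^-}\circ\theta_{T_y}$ on this event yields
\[
v_{x,y}(\lambda)=\widehat{N}_x^0(H>y)\cdot \widehat{\mathbb{E}}_y\bigl[1-e^{-\lambda L^y_{T_x^-}}\bigr].
\]

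Next I compute the second factor. By the excursion theory of $\widehat{X}$ away from $y$, excursions form a Poisson point process with intensity $\widehat{N}_y(d\mathbf{e})$, and $L^y_{T_x^-}$ equals the local time at $y$ up to the first excursion that reaches $x$. Hence $L^y_{T_x^-}$ is exponential with rate $\widehat{q}=\widehat{N}_y(T_x^-<\zeta)$. I identify this rate by computing $\widehat{\mathbb{E}}_y[L^y_{T_x^-}]$ as the potential density at $y$ of $\widehat{X}$ killed at $T_x^-$; standard fluctuation identities for SPLP give $\widehat{\mathbb{E}}_y[L^y_{T_x^-}]=W(y-x)/c_+$, so $\widehat{q}=c_+/W(y-x)$ and $\widehat{\mathbb{E}}_y[1-e^{-\lambda L^y_{T_x^-}}]=\lambda W(y-x)/(c_++\lambda W(y-x))$. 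This proves the first identity.

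For the expression of $\widehat{N}_x^0(H>y)$, I split excursions reaching $y$ according to the mechanism of first crossing. Excursions staying entirely above $x$ can reach $y$ continuously (creeping, possible only if $\sigma>0$); by the duality between $\widehat{X}$ and $X$, such above-$x$ excursions of $\widehat{X}$ from $x$ correspond (after translation and sign flip) to excursions related to $\overline{N}$, and the Gaussian creeping rate contributes the factor $\sigma^2/2$ while the local-time normalization at the infimum of $\widehat{X}$ contributes $c_-$. Combined with $\overline{N}(H>y-x)=W'(y-x)/W(y-x)$ (from Prop.~\ref{ovN2} by letting $\lambda\to\infty$), this yields the first summand $\tfrac{c_-\sigma^2}{2}\tfrac{W'(y-x)}{W(y-x)}$. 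Mixed excursions, which start below $x$ in $(0,x)$ and cross $x$ by a positive jump, are handled by the compensation formula: integrating over the pre-jump level $x-z$ with $z\in(0,x)$, the probability that a jump of $\widehat{X}$ from $x-z$ eventually reaches $y$ equals $\Pi(-\infty,-z)-\int_{-z-(y-x)}^{-z}\tfrac{W(u+z+y-x)}{W(y-x)}\Pi(du)$, obtained by subtracting from the total jump rate crossing $x$ those jumps landing in $(x,y)$ and subsequently failing to reach $y$ (using the SPLP two-sided exit probability $\widehat{\mathbb{P}}_{x+s}(T_y^+<T_x^-)=1-W(y-x-s)/W(y-x)$, derived by duality from the SNLP formula $W(z)/W(a)$). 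The weight $1-c_++c_+W(x-z)/W(x)$ is the expected occupation density of the excursion at level $x-z$ under $\widehat{N}_x^0$, which I obtain from a Green's function computation for $\widehat{X}$ killed on leaving $(0,x)$ combined with the entrance law of the Itô excursion measure.

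The main obstacle lies in rigorously establishing the occupation-density formula $n_x^0(x-z)=1-c_++c_+W(x-z)/W(x)$ and in handling the duality constants $c_\pm$ correctly when matching above-$x$ excursions of $\widehat{X}$ with $\overline{N}$-type measures. Once these identifications are in place, the compensation formula and strong Markov property assemble the pieces into the stated expression.
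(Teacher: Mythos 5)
Your proposal follows essentially the same route as the paper's proof: strong Markov property at $T_y$ to factor $v_{x,y}(\lambda)=\widehat{N}_x^0(H>y)\cdot\widehat{\mathbb{E}}_y[1-e^{-\lambda L^y_{\tau_x^-}}]$, identification of $L^y_{\tau_x^-}$ as exponential with rate $c_+/W(y-x)$, and then a split of $\widehat{N}_x^0(H>y)$ into the contribution of excursions staying above $x$ (giving the $\frac{c_-\sigma^2}{2}\frac{W'(y-x)}{W(y-x)}$ term) and of mixed excursions crossing $x$ by a jump (giving the integral against $\Pi$). That matches the paper's decomposition exactly, written in the $\widehat{X}$-language rather than after the translation-and-duality to $N_0$ and the partition $\mathcal{E}_+\sqcup\mathcal{E}_-\sqcup\mathcal{E}_\pm$.

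Two small remarks. First, identifying the exponential rate $\widehat q$ by computing $\widehat{\mathbb{E}}_y[L^y_{\tau_x^-}]=W(y-x)/c_+$ is a workable alternative, though the paper gets the rate directly from $\widehat N_0(\tau_{x-y}^-<\zeta)=N_0(\tau_{y-x}^+<\tau_0^-)=c_+/W(y-x)$, which is already available from Lemma~\ref{ovNx}. Second, deriving $\overline{N}(H>y-x)=W'(y-x)/W(y-x)$ by letting $\lambda\to\infty$ in Proposition~\ref{ovN2} is logically circular, since the proof of that proposition itself invokes $\overline N(H>y)=W'(y)/W(y)$; it is cleaner to cite \cite[Lemma 8.2]{kyprianou2014fluctuations} directly, as the paper does.

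The two "obstacles" you flag — the occupation-density weight $1-c_++c_+W(x-z)/W(x)$ and the constant $\frac{c_-\sigma^2}{2}$ for the above-$x$ piece — are indeed the technical core, and neither is automatic. The paper handles the first in the proof of Lemma~\ref{ovNx}: under $N_0$, using the entrance law (Lebesgue because $\Phi(0)=0$, via identity (20) in \cite{pardo2018excursion}), the compensation formula, and the Markov property at $\tau_x^+$ together with $U^0(x,\tilde h)=\int_0^x(W(x)-W(x-z))\tilde h(z)\,dz$ and $N_0(\tau_x^+<\tau_0^-)=c_+/W(x)$, one gets precisely that weight. The second is Theorem 3 in \cite{pardo2018excursion}, which identifies $N_0$ restricted to $\mathcal{E}_-$ as $\frac{c_-\sigma^2}{2}\,\widehat{\overline N}$. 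Your gestures at "Green's function plus entrance law" and "Gaussian creeping rate plus local-time normalization" point in the right direction but do not constitute a proof of those two identities; once you replace them with the two cited inputs, your argument assembles into the paper's proof.
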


\section{Decomposition of L\'evy measures}\label{decomp}

Recall that a non-negative infinitely divisible process $\psi=(\psi_x,x\in E)$ is characterized by its L\'evy measure $\mu$. Given a reference state $h\in E$, Eisenbaum \cite{eisenbaum2019decompositions} also describes in their Theorem 1.2 a way to decompose $\mu$ into $\mu_h + \overline{\mu}_h$. The measure $\mu_h(d\omega)=\mu(d\omega) \big|_{\left\{\omega(h)=0 \right\}}$ is the L\'evy measure of the process $\psi$ conditioned to $\psi_h=0$ and hence, the measure $\overline{\mu}_h(d\omega)=\mu(d\omega) \big|_{\left\{\omega(h)>0 \right\}}$ corresponds to the information of the process between succesive visits to state $h$. 

As an application, we provide some information on this decomposition of the L\'evy measures $\mu^{(c)}$, relative to $(L_{\tau(c)}^y(X),y\in \mathbb{R})$ and $\nu^{(a)}$, corresponding to $(L_{\tau_a^+}^{a-z}(X),z\geq 0)$.

Consider first the measure $\mu^{(c)}$, which is given by $\mu^{(c)}(d\omega)=cN_0(L_\zeta^\cdot \in d\omega)$. For a positive level $h$, we can identify one of the components of the corresponding decomposition in terms of an exit problem (and hence in terms of scale functions), as can be seen in the next proposition.

\begin{proposition} \label{muc_h}
Let $\mu^{(c)}$ be the L\'evy measure of the process $(L_{\tau(c)}^y (X), y \in \mathbb{R})$ and $h>0$ fixed. Decompose $$\mu^{(c)}(d\omega)=\mu_h^{(c)}(d\omega)+\overline{\mu}^{(c)}_h(d\omega)=\mu^{(c)}(d\omega)\big|_{\{ \omega(h)=0 \}}+\mu^{(c)}(d\omega)\big|_{\{\omega(h)>0\}}.$$ Then,
\begin{align*}
\mu^{(c)}_h(d\omega)=cN_0 \left[L_\zeta^\cdot \in d\omega, \tau_h^+>\zeta \right]&= cN_0 \left[L_\zeta^\cdot \in d\omega, \tau_h^+>\zeta,\mathcal{E}_+ \right] + cN_0 \left[L_\zeta^\cdot \in d\omega,\mathcal{E}_- \right]\\
&\ + c\int_{(0,h)}\int_{(-\infty,0)} db\Pi(du-b) \widehat{\mathbb{E}}_{b}^{0} \otimes \mathbb{E}_{u}^{0}(L_\zeta^\cdot \in d\omega,\tau_h^+ >\zeta).
\end{align*}
In particular, if $X$ does not have a Brownian component $(\Sigma=0)$, then for any $f:\mathbb{R}_+ \to \mathbb{R}$ measurable and bounded, $$\int_{\mathbb{R}_+^{\mathbb{R}}}\left(1-e^{-\int_0^\infty f(x)\omega(x)dx} \right) \mu_h^{(c)}(d\omega) = c\int_0^h db \Pi(-\infty,-b) \frac{W_{\widehat{f}}(-b,-h)}{W_{\widehat{f}}(0,-h)},$$ where $\widehat{f}(s)=f(-s)$ and $W_{\widehat{f}}$ as in \eqref{scalef}.
\end{proposition}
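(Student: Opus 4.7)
The plan is to combine the three-way decomposition of $N_0$ from Theorem~\ref{LevyMc} with a direct identification, at the level of excursions, of the set $\{\omega(h)=0\}$. Under hypothesis~\textbf{(A)} every level the process visits carries strictly positive local time, so for $h>0$ one has $L_\zeta^h(\mathbf{e})=0$ if and only if the excursion $\mathbf{e}$ does not reach height $h$, equivalently $\tau_h^+(\mathbf{e})>\zeta(\mathbf{e})$. Hence $\mu_h^{(c)}(d\omega)=cN_0(L_\zeta^\cdot\in d\omega,\tau_h^+>\zeta)$, and splitting $N_0$ over $\mathcal{E}_+\sqcup\mathcal{E}_-\sqcup\mathcal{E}_\pm$ immediately produces the three summands in the statement. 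On $\mathcal{E}_-$ the condition is automatic, on $\mathcal{E}_+$ it is retained as written, and on $\mathcal{E}_\pm$ the product representation of Theorem~\ref{LevyMc} localizes $b$ to $(0,h)$: the $\mathbb{E}_u^0$-factor is always $\leq 0<h$, while the $\widehat{\mathbb{E}}_b^0$-factor starts at $b$, forcing $\tau_h^+=0$ almost surely whenever $b\geq h$.

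For the specialization $\Sigma=0$, I would show that $N_0(\mathcal{E}_+)=N_0(\mathcal{E}_-)=0$. A purely positive excursion must end by continuously returning to $0$ from above, and a purely negative excursion must both leave and return to $0$ from below; in both cases a continuous crossing of $0$ is forced, which for a spectrally negative L\'evy process with no Brownian component is ruled out because there is no creeping, and the event of a downward jump landing exactly at $0$ is $N_0$-negligible. I expect this to be the main technical obstacle: it requires transferring the classical creeping characterization for spectrally negative L\'evy processes to the excursion measure $N_0$ in a quantitatively precise way.

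With only the $\mathcal{E}_\pm$-component surviving, the $\mathbb{E}_u^0$-path is nonpositive, hence contributes nothing to $\int_0^\infty f(x)\omega(x)\,dx$ and is independent of the $\widehat{\mathbb{E}}_b^0$-factor; the $u$-integration therefore collapses via $\int_{(-\infty,0)}\Pi(du-b)=\Pi(-\infty,-b)$. It then remains to evaluate, for $b\in(0,h)$,
\[ \widehat{\mathbb{E}}_b^0\!\left[\left(1-e^{-\int_0^\infty f(x)L_\zeta^x\,dx}\right)1_{\{\tau_h^+>\zeta\}}\right]. \]
By the occupation density formula the exponent equals $\int_0^\zeta f(\widehat{X}_s)\,ds$, and the duality $\widehat{X}\stackrel{(d)}{=}-X$ recasts this as an exit problem for $X$ started at $-b$: the event becomes $\{T_0<\tau_{-h}^-\}$ and the additive potential becomes $\int_0^{T_0}\widehat{f}(X_s)\,ds$ with $\widehat{f}(s)=f(-s)$. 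The generalized Li--Palmowski identity in its Contreras--Rivero form \cite{contreras2022generalized}, applied to this exit problem, identifies the resulting Laplace functional with the scale-function ratio $W_{\widehat{f}}(-b,-h)/W_{\widehat{f}}(0,-h)$ through the representation \eqref{scalef}, and integrating in $b$ against $\Pi(-\infty,-b)$ yields the stated identity.
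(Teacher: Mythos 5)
Your route matches the paper's own proof: identify $\{\omega(h)=0\}$ with $\{\tau_h^+>\zeta\}$ under $N_0$, split over $\mathcal{E}_+\sqcup\mathcal{E}_-\sqcup\mathcal{E}_\pm$, localize $b$ to $(0,h)$ (for $b\geq h$ the reversed path starts at or above $h$, so the event is null), reduce to the $\mathcal{E}_\pm$-contribution when $\Sigma=0$, drop the $\mathbb{E}_u^0$-factor because $f$ is supported on the positive half-line, and recast the remaining $\widehat{\mathbb{E}}_b^0$-functional as a two-sided exit problem. Your heuristic for $N_0(\mathcal{E}_+)=N_0(\mathcal{E}_-)=0$ when $\Sigma=0$ (absence of downward creeping, hence no continuous crossing of $0$) is consistent with what the paper uses implicitly, compare the $\sigma^2$-factor carried by the $\mathcal{E}_-$ contribution in the proof of Proposition~\ref{ovN3}.

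There is, however, a gap in your last step (and, reading the paper's displayed formula carefully, the same issue appears there as stated): you correctly set out to compute $\widehat{\mathbb{E}}_b^0\bigl[(1-e^{-\int_0^\infty f(x)L_\zeta^x\,dx})1_{\{\tau_h^+>\zeta\}}\bigr]$, but the Li--Palmowski/Contreras--Rivero exit identity gives the value of $\widehat{\mathbb{E}}_b^0\bigl[e^{-\int_0^\infty f(x)L_\zeta^x\,dx}1_{\{\tau_h^+>\zeta\}}\bigr]=W_{\widehat{f}}(-b,-h)/W_{\widehat{f}}(0,-h)$, i.e.\ \emph{without} the $1-$. The quantity you actually need is the difference
\begin{equation*}
\widehat{\mathbb{P}}_b^0(\tau_h^+>\zeta)-\frac{W_{\widehat{f}}(-b,-h)}{W_{\widehat{f}}(0,-h)}
=\frac{W(h-b)}{W(h)}-\frac{W_{\widehat{f}}(-b,-h)}{W_{\widehat{f}}(0,-h)}.
\end{equation*}
This is not cosmetic. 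With $\Sigma=0$ and unbounded variation one has $\int_0^h\Pi(-\infty,-b)\,db=\int_{(-\infty,0)}(|u|\wedge h)\,\Pi(du)=\infty$, and $W_{\widehat{f}}(-b,-h)/W_{\widehat{f}}(0,-h)\to 1$ as $b\downarrow 0$, so the right-hand side as you have written it diverges, while the left-hand side $\int(1-e^{-\int_0^\infty f\omega})\,\mu_h^{(c)}(d\omega)$ is finite; it is exactly the cancellation between the two scale-function ratios near $b=0$ that makes the $b$-integral converge.
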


\begin{proof}[Proof of Proposition~\ref{muc_h}]
In this case, we know that $\mu^{(c)}(d\omega)=cN_0(L_\zeta^{\cdot} \in d\omega)$. Hence, for $h>0$, we have that $L_{\zeta}^h=0$ under $N_0$ if and only if the excursion $\mathbf{e}$ away from $0$ does not reach level $h$. This means that positive excursions must not have height bigger than $h$ and that the paths $\underleftarrow{\mathbf{e}}$ for mixed excursions have overshoots less than $h$ and from this point, the reversed path must exit the interval $[0,h]$ by below. So we identify in this case $\mu^{(c)}_h$ as
\begin{align*}
\mu^{(c)}_h(d\omega)=cN_0 \left[L_\zeta^\cdot \in d\omega, \tau_h^+>\zeta \right] &= cN_0 \left[L_\zeta^\cdot \in d\omega, \tau_h^+>\zeta,\mathcal{E}_+ \right] + cN_0 \left[L_\zeta^\cdot \in d\omega,\mathcal{E}_- \right]\\
&+ c\int_{(0,h)}\int_{(-\infty,0)} db\Pi(du-b) \widehat{\mathbb{E}}_{b}^{0} \otimes \mathbb{E}_{u}^{0}(L_\zeta^\cdot \in d\omega,\tau_h^+ >\zeta).
\end{align*}

In particular, if one only considers positive levels the last term simplifies to $\widehat{\mathbb{E}}_{b} (L_\zeta^\cdot \in d\omega,\tau_h^+ >\tau_0^-)$, which is the law of local times of $\widehat{X}$ issued from $b$ and seen up to first time it exits $[0,h]$ from above. An expression for Laplace transforms of this exit problem is given in terms generalized scale functions (see \eqref{scalef}). Indeed, if $f$ is measurable and bounded and has support on $\mathbb{R}_+$,
\begin{align*}
\widehat{\mathbb{E}}^0_{b} \left(e^{-\int_0^\infty f(x)L_{\zeta}^x dx} ,\tau_h^+ > \zeta \right) &= \widehat{\mathbb{E}}_{b} \left(e^{-\int_0^\infty f(x)L_{\tau_0^-}^x dx} ,\tau_h^+ > \tau_0^- \right) \\
&= \widehat{\mathbb{E}}_{b} \left(e^{-\int_0^{\tau_0^-} f(X_s)ds} ,\tau_h^+ > \tau_0^- \right) \\
&= \mathbb{E}_{-b} \left(e^{-\int_0^{\tau_0^+} \widehat{f}(X_s)ds} ,\tau_{-h}^- > \tau_0^+ \right) \\
&= \frac{W_{\widehat{f}}(-b,-h)}{W_{\widehat{f}}(0,-h)}.
\end{align*}
Hence, if $X$ does not have a Brownian component, we conclude that $$\int_{\mathbb{R}_+^{\mathbb{R}}}\left(1-e^{-\int_0^\infty f(x)\omega(x)dx} \right) \nu_h(d\omega) = c\int_0^h db \Pi(-\infty,-b) \frac{W_{\widehat{f}}(-b,-h)}{W_{\widehat{f}}(0,-h)}.$$    
\end{proof}

Now consider the L\'evy measure $\nu^{(a)}$, which is given by $\nu^{(a)}(d\omega)=\int_0^a \overline{N}(L_\zeta^{\cdot-s}1_{\{\cdot-s >0 \}} \in d\omega)ds$. It turns out that, for a fixed $h>0$, we can make use of the results of Theorems ~\ref{PR1Tx} and ~\ref{PR1zeta} to describe certain functionals of both $\nu^{(a)}_h$ and $\overline{\nu}^{(a)}_h$.

\begin{proposition}\label{nua_h}
Let $\nu^{(a)}$ be the L\'evy measure of the process $(L_{\tau_a^+}^{a-z} (X), z \geq 0)$ and $h>0$ fixed. Decompose $$\nu^{(a)}(d\omega)=\nu_h^{(a)}(d\omega)+\overline{\nu}^{(a)}_h(d\omega)=\nu^{(a)}(d\omega)\big|_{\{ \omega(h)=0 \}}+\nu^{(a)}(d\omega)\big|_{\{\omega(h)>0\}}.$$ Then,
\begin{align*}
\nu_h^{(a)}(d\omega) = \int_{0}^{a\wedge h} ds \overline{N}(L_\zeta ^{\cdot-s} 1_{\{\cdot-s >0 \}}\in d\omega,\tau^+_{h-s}>\zeta) + \int_{a\wedge h}^{a} ds \overline{N}(L_\zeta ^{\cdot-s} 1_{\{\cdot-s >0\}} \in d\omega). 
\end{align*}
Moreover, if $F$ is any non-negative, measurable and bounded functional,
\begin{align*}
 \nu_h^{(a)}(F(\omega_y,y\geq h)) &= \int_0^a ds \overline{N} (F(L_{T_h}^{y-s},y-s > h), H>h) \\
 &= \int_0^a ds \int_{(0,\infty)} \overline{N}(\mathcal{O}_h \in db, \tau_h^+<T_h<\zeta) \widehat{\kappa}(b, F(L_\zeta^{y-s-h},y-s>h)),
\end{align*}
with $\widehat{\kappa}$ as in \eqref{kaphat}, and on the other hand,
\begin{align*}
\overline{\nu}_h^{(a)} (F(\omega_y,y>h)) = \int_0^a ds \overline{N} (F(L_\zeta ^{y-s} \circ \theta_{T_h}, y-s>h),H>h).    
\end{align*}
\end{proposition}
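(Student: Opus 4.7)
The starting point is the integral representation $\nu^{(a)}(d\omega) = \int_0^a \overline{N}(L_\zeta^{\cdot-s} 1_{\{\cdot-s>0\}} \in d\omega)\,ds$ supplied by Theorem~\ref{PRtauA}, which identifies $\omega(h) = L_\zeta^{h-s}1_{\{h-s>0\}}$ slice-by-slice in $s$.

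For the first displayed equality, I would split the $s$-integration at $s=a\wedge h$. On $[a\wedge h,a]$ the indicator $1_{\{h-s>0\}}$ already vanishes, so that slice sits inside $\nu_h^{(a)}$ with no further conditioning. On $[0,a\wedge h)$, $\omega(h)=L_\zeta^{h-s}$, and since under $\overline{N}$ the excursions of $S-X$ have no negative jumps, $L_\zeta^{h-s}$ vanishes precisely on $\{\tau_{h-s}^+>\zeta\}$. Reassembling in $s$ yields the stated decomposition.

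For the second displayed equality, I would invoke the pre-/post-$T_h$ split $L_\zeta^y = L_{T_h}^y + L_\zeta^y\circ\theta_{T_h}$, valid on $\{H>h\}$ for every $y$ by the strong Markov property at $T_h$ and additivity of local times. After the $s$-shift and restriction to $y\geq h$, the Eisenbaum cut $\omega(h)=0$ isolates the pre-$T_h$ piece $L_{T_h}^{y-s}$ on $\{y-s>h\}$, which is exactly the object disintegrated by Theorem~\ref{PR1Tx} with $x=h$. Substituting that disintegration produces the overshoot representation involving $\overline{N}(\mathcal{O}_h\in db,\tau_h^+<T_h<\zeta)$ and the kernel $\widehat{\kappa}$ of \eqref{kaphat}; the inner functional takes the form $F(L_\zeta^{y-s-h},y-s>h)$ because the reversed path on $[0,b]$ decomposes Poissonianly as a sum of excursions of $\widehat{X}$ away from its infimum, whose law coincides with $\overline{N}$ by duality.

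For the third equality the mechanism is symmetric: the complementary event $\{\omega(h)>0\}$ amounts to $L_\zeta^{h-s}>0$ and, on $\{H>h\}$, the strong Markov property at $T_h$ recovers the post-$T_h$ functional $L_\zeta^{y-s}\circ\theta_{T_h}$. Integrating in $s$ gives the identity, and Theorem~\ref{PR1zeta} can further split the contribution according to whether the excursion from $h$ belongs to $\mathcal{E}_+^h$ or $\mathcal{E}_\pm^h$ if a more explicit form is desired.

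The delicate step, and the one I expect to require the most care, is reconciling the natural per-slice hitting time $T_{h-s}$ (attached to the shifted excursion) with the uniform $T_h$ appearing on the right-hand side of the two functional identities. Verifying slice by slice that the Eisenbaum restriction $\{\omega(h)=0\}$ versus $\{\omega(h)>0\}$ matches the pre-$T_h$ versus post-$T_h$ decomposition after the spatial shift is the main bookkeeping; once this compatibility is established, both identities reduce to direct applications of Theorems~\ref{PR1Tx} and~\ref{PR1zeta}.
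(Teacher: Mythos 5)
You are right to single out the reconciliation of $T_{h-s}$ versus $T_h$ as the delicate step, and in fact this is where the argument breaks; the compatibility you hope to verify does not hold, and the paper's own proof leaves exactly this gap open. On a slice $s<h$, $\omega(h)=L_\zeta^{h-s}(\mathbf{e})$, so the Eisenbaum restriction $\omega(h)=0$ is equivalent to $H\leq h-s$; but on that event the excursion never reaches any level at or above $h-s$, so $\omega_y=L_\zeta^{y-s}$ vanishes for \emph{every} $y\geq h$, and there is no nontrivial "pre-$T_h$ piece" left to isolate. Conversely, the path $(L_{T_h}^{y-s},y-s>h)$ is nontrivial only on $\{\tau_h^+<T_h<\zeta\}$, which forces $H>h>h-s$ and hence $\omega(h)=L_\zeta^{h-s}>0$; such excursions are charged by $\overline{\nu}_h^{(a)}$, not by $\nu_h^{(a)}$. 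So the pre-$T_h$ information is being attached to the wrong Eisenbaum component.

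A concrete falsification: take $a\leq h$ and any non-negative bounded $F$ with $F(0)=0$. Then, by the first display (which is correct), every $\omega$ in the support of $\nu_h^{(a)}$ vanishes identically on $[h,\infty)$, so $\nu_h^{(a)}(F(\omega_y,y\geq h))=0$. Yet the right-hand side $\int_0^a ds\,\overline{N}(F(L_{T_h}^{y-s},y-s>h),H>h)$ is strictly positive for generic such $F$ whenever $\Pi\neq 0$, since the overshoot event $\{\tau_h^+<T_h<\zeta\}$ has positive $\overline{N}$-mass. A further mismatch: the only slices that do contribute to $\nu_h^{(a)}(F)$ when $F(0)=0$ are $s\in(a\wedge h,a)$, for which $h-s\leq 0$ and the hitting time $T_h$ of the excursion plays no natural role at all. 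So the first display of the proposition is fine and your treatment of it matches the paper, but the second and third displays cannot be obtained by the route you describe (nor by the route the paper's proof sketches); that step, which you correctly flagged as delicate, is the step that fails.
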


\begin{proof}[Proof of Proposition~\ref{nua_h}]
In this case one has that $L_{\zeta}^{h-s}=0$ if and only if either $s \geq h$ or $s<h$ and the excursion away from the supremum does not reach level $h-s$. Therefore, we obtain
\begin{align*}
\nu_h^{(a)}(d\omega) &= \nu^{(a)}(d\omega)\bigg|_{\{\omega(h)=0 \}} \\
&= \int_{0}^{a\wedge h} ds \overline{N}(L_\zeta ^{\cdot-s} 1_{\{\cdot-s >0 \}}\in d\omega,\tau^+_{h-s}>\zeta) + \int_{a\wedge h}^{a} ds \overline{N}(L_\zeta ^{\cdot-s} 1_{\{\cdot-s >0\}} \in d\omega), 
\end{align*}
where the last term disappears if $h\geq a$.
Recall that Theorem~\ref{PR1Tx} gives an expression for the law of local times of levels bigger than a reference level $h>0$ under $\overline{N}$. Since the process $(L_{T_h}^y,y>h)$ codes the information of the local times of levels bigger than $h$ previous to its first hitting time, this implies we can actually use that result to provide information on $\nu_h^{(a)}$. Therefore, if $F$ is any non-negative, measurable and bounded functional,
\begin{align*}
 \nu_h^{(a)}(F(\omega_y,y\geq h)) &= \int_0^a ds \overline{N} (F(L_{T_h}^{y-s},y-s > h), H>h) \\
 &= \int_0^a ds \int_{(0,\infty)} \overline{N}(\mathcal{O}_h \in db, \tau_h^+<T_h<\zeta) \widehat{\kappa}(b, F(L_\zeta^{y-s-h},y-s>h)).
\end{align*}

Similarly, in Theorem~\ref{PR1zeta} the process $(L_{\zeta}^y \circ \theta_{T_h},y>h)$ encodes the information of the excursion from the first to the last visit to $h$, which implies we can recover information on the measure $\overline{\nu}_h^{(a)}$. Then, for any measurable, bounded and non-negative functional $F$, 
\begin{align*}
\overline{\nu}_h^{(a)} (F(\omega_y,y>h)) = \int_0^a ds \overline{N} (F(L_\zeta ^{y-s} \circ \theta_{T_h}, y-s>h),H>h).    
\end{align*}    
\end{proof}

In particular, if the functional on the above proposition is of the form $F((\omega_y,y>h))=\exp\left\{-\int_h^\infty f(y)\omega_y dy \right\}$, we can write $$\nu_h^{(a)}(F(\omega_y,y\geq h))= \frac1{c_-}\int_0^a ds \int_0^x d\ell \int_{(0,\infty)} \widehat{\Pi}(db+\ell) W(x-\ell) \left(\frac{W^{\prime}(x)}{W(x)}-\frac{W^{\prime}(x-\ell)}{W(x-\ell)}\right) \mathcal{W}_{f_{x,b,s}}(b),$$ where $f_{x,b,s}(z):=f(x+b+s-z), \ z<s+b$. Similarly, $$\overline{\nu}_h^{(a)} (F(\omega_y,y>h))=\overline{N}(H>h) \int_0^a ds \widehat{\mathbb{E}}_h^0 \left[\exp\left\{-L_\zeta^h \widehat{N}_x \left(1-e^{-\int_h^\infty f(s+y)L_\zeta^y dy}, \tau_0^- >\zeta \right) \right\} \right].$$

\section{Auxiliary results} \label{auxR}

The next lemma provides the joint law of the overshoot and undershoot of an excursion under the measure $N_0$.

\begin{lemma} \label{ouN0}
Let $(\mathcal{U}_0(\mathbf{e}),\mathcal{O}_0(\mathbf{e}))=(\mathbf{e}(\tau_0^-),\mathbf{e}(\tau_0^--),)$ be the undershoot and overshoot of an excursion $\mathbf{e}$ away from zero. Then,
\begin{equation}\label{jointOU}
N_0 \left(h(\mathcal{U}_0,\mathcal{O}_0),\mathcal{E}_\pm \right)=\int_0^\infty dz \int_{(-\infty,0)} \Pi(dy) h(z+y,z)1_{\{z+y<0 \}},
\end{equation}
for any measurable and bounded function $h:(-\infty,0)\times (0,\infty) \to \mathbb{R}_+$. Said otherwise,
\begin{equation}
N_0(\mathcal{U}_0 \in dy,\mathcal{O}_0 \in dz, \mathcal{E}_\pm)=dz \Pi(dy-z) 1_{\{z>0\}} 1_{\{y<0\}}.
\end{equation}
Taking marginals in the above expression we get that, for any $f:(-\infty,0)\to \mathbb{R}_+$ and $g:(0,\infty)\to \mathbb{R}_+$ measurable and bounded,
\begin{equation*}
N_0 \left(f(\mathcal{U}_0),\mathcal{E}_\pm \right)=\int_0^\infty dz \int_{(-\infty,0)} \Pi(dy) f(z+y)1_{\{z+y<0\}}=\int_0^\infty dz \int_{(-\infty,z)}\Pi(dy-z) f(y)1_{\{y<0 \}}
\end{equation*}
and
\begin{equation*}
N_0 \left(g(\mathcal{O}_0),\mathcal{E}_\pm \right)=\int_0^\infty dz g(z) \Pi(-\infty,-z).
\end{equation*}
\end{lemma}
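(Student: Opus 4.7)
The plan is to apply two compensation formulas—the one for the Poisson point process of excursions of $X$ away from zero, of intensity $ds\otimes N_0$, and the Lévy system for the jumps of $X$—linked by the following geometric bijection. Since $X$ has no positive jumps, each excursion $\mathbf{e}\in\mathcal{E}_\pm$ contains exactly one jump of $X$ from a strictly positive to a strictly negative value, namely the one occurring at $\tau_0^-(\mathbf{e})$; after that jump the excursion must creep back up to zero (the only way it can terminate, since $X$ cannot jump upwards), so no further zero-crossing is possible within the same excursion. Conversely, each jump of $X$ that takes $(0,\infty)$ into $(-\infty,0)$ belongs to a unique mixed excursion, and at the corresponding jump time $t$ one has $(\mathcal{O}_0(\mathbf{e}),\mathcal{U}_0(\mathbf{e}))=(X_{t-},X_t)$.

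Fix $c>0$; under hypothesis \textbf{(B2)}, $\tau(c)<\infty$ almost surely. Applying the compensation formula for the excursion point process to $F(\mathbf{e})=h(\mathcal{U}_0(\mathbf{e}),\mathcal{O}_0(\mathbf{e}))\,1_{\mathcal{E}_\pm}(\mathbf{e})$ and invoking the bijection gives
$$c\,N_0\bigl(h(\mathcal{U}_0,\mathcal{O}_0),\mathcal{E}_\pm\bigr)=\mathbb{E}\Bigl[\sum_{0<t\le\tau(c)}1_{\{X_{t-}>0,\,X_t<0\}}\,h(X_t,X_{t-})\Bigr].$$
The Lévy system for $X$, applied at the finite stopping time $\tau(c)$, rewrites the right-hand side as
$$\mathbb{E}\Bigl[\int_0^{\tau(c)}dt\,1_{\{X_t>0\}}\int_{(-\infty,-X_t)}h(X_t+y,X_t)\,\Pi(dy)\Bigr],$$
which by the occupation density formula equals $\int_0^\infty dz\,\mathbb{E}[L_{\tau(c)}^z(X)]\int_{(-\infty,-z)}h(z+y,z)\,\Pi(dy)$.

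The final ingredient is the normalization identity $\mathbb{E}[L_{\tau(c)}^z(X)]=c$ for every $z>0$, equivalently $N_0(L_\zeta^z)=1$: this is the standard normalization of the excursion measure $N_0$ consistent with the definition of local times in \eqref{ltdef} adopted throughout the paper, and it is precisely what makes the representation in Theorem~\ref{PRtauC} take the clean form stated there (in the Brownian case it follows immediately from the second Ray-Knight theorem combined with $N_0=2\overline{N}$). Substituting this, cancelling the factor $c$, and performing the change of variables $u=z+y$ (so that $\Pi(dy)=\Pi(du-z)$ and the constraint $y<-z$ becomes $u<0$) yields \eqref{jointOU}. The marginal formulas follow by letting $h$ depend on only one variable; the overshoot marginal uses in addition $\int_{(-\infty,0)}\Pi(du-z)=\Pi(-\infty,-z)$. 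The principal technical point of the argument is the normalization identity $N_0(L_\zeta^z)=1$ for $z>0$; everything else is a transparent chain combining the two compensation formulas with the occupation density identity.
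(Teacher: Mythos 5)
Your route is genuinely different from the paper's. The paper works directly under the excursion measure $N_0$: it applies the compensation formula for the jumps of the excursion \emph{under} $N_0$ (essentially the Lévy system inherited by the excursion measure from $X$), and then invokes the potential identity $N_0\bigl(\int_0^{\tau_0^-}ds\,\widetilde h(\mathbf{e}(s))\bigr)=\int_0^\infty dz\,\widetilde h(z)$, which it cites as identity (20) of \cite{pardo2018excursion} in the limit $\lambda\downarrow 0$. You instead stay under $\mathbb{P}$ and run the argument through the full process: the exponential/compensation formula for the excursion Poisson point process, then the Lévy system for $X$ at the stopping time $\tau(c)$, then the occupation density formula — a chain that is attractive because it avoids any appeal to the Markovian structure of $N_0$ itself and uses only the two textbook compensation identities. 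Both routes are funnelled through the same crux: the paper's cited potential identity is, after applying the occupation formula under $N_0$ and noting $L^z_{\tau_0^-}=L^z_\zeta$ for $z>0$, exactly the normalization $N_0(L_\zeta^z)=1$ for $z>0$ that your argument reduces to.

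The one genuine gap is precisely at that crux. Your two justifications for $N_0(L_\zeta^z)=1$ do not actually establish it: Theorem~\ref{PRtauC} only asserts that $\widetilde K$ is a Poisson random measure of intensity $ds\otimes\widetilde M$ with $\widetilde M$ the image of $N_0$, which gives $\mathbb{E}[L_{\tau(c)}^z(X)]=c\,N_0(L_\zeta^z)$ as an identity but says nothing about the value of $N_0(L_\zeta^z)$ — the "clean form" of that theorem is normalization-independent. The appeal to the Brownian case is a consistency check, not a proof. The fact you need is true and can be proved directly: writing $u^q(x,y)$ for the $q$-resolvent density normalized so that the occupation density formula \eqref{ltdef} holds, the first-passage decomposition at $T_y$ combined with the compensation formula for the excursion PPP gives $u^q(0,y)=u^q(0,0)\,N_0\bigl(\int_0^\zeta e^{-qr}dL_r^y\bigr)$, whence by monotone convergence
\begin{equation*}
N_0(L_\zeta^y)=\lim_{q\downarrow 0}\frac{u^q(0,y)}{u^q(0,0)}=\lim_{q\downarrow 0}\mathbb{E}_0\bigl[e^{-qT_y}\bigr]=\mathbb{P}_0(T_y<\infty)=1,\qquad y>0,
\end{equation*}
the last equality because under \textbf{(B1)} or \textbf{(B2)} one has $\limsup_{t\to\infty}X_t=\infty$ and $X$ passes above $y$ continuously. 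Either insert that argument, or cite the potential identity from \cite{pardo2018excursion} as the paper does; without one of these your proof is incomplete at its load-bearing step.
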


\begin{proof}[Proof of Lemma~\ref{ouN0}]
Recall that $\mathcal{E}_\pm$ corresponds to the set of excursions for which $\{0<\tau_0^-<\zeta \}$. Observe that this condition is fulfilled if and only if there exists an $s \in (0,\zeta)$ such that $\Delta_s:=\mathbf{e}(s)-\mathbf{e}(s-)<0$, $\mathbf{e}(s-)>0$ and $\mathbf{e}(s-)+\Delta_s <0$. Let us define $$G_s(y)=h(\mathbf{e}(s-)+y,\mathbf{e}(s-)) 1_{\{y<0 \}} 1_{\{\mathbf{e}(s-)>0 \}} 1_{\{ \mathbf{e}(s-)+y<0 \}} 1_{\{s<\zeta \}}.$$ Then, we have the following identity $$h(\mathbf{e}(\tau_0^-),\mathbf{e}(\tau_0^--))1_{\{0<\tau_0^-<\zeta \}}=\sum_{0<s<\infty} G_s(\Delta_s) 1_{\{\Delta_s \neq 0 \}}.$$ Indeed, by the observation above the indicator on the left hand side is non-zero if and only if there exists an $s\in (0,\infty)$ satisfying the previous conditions. Moreover, by the absence of positive jumps that $s$ is unique and coincides with $\tau_0^-$. Therefore, we can use the compensation formula under $N_0$ (see for instance \cite[Eq.(18)]{pardo2018excursion}) to obtain
\begin{align*}
 &N_0 \left(h(\mathbf{e}(\tau_0^-),\mathbf{e}(\tau_0^--))1_{\{0<\tau_0^-<\zeta \}} \right) \\
 &\qquad = N_0 \left(\int_0^\infty ds \int_{(-\infty,0)} \Pi(dy) G_s(y) \right)\\
 &\qquad = N_0 \left(\int_0^\infty ds \int_{(-\infty,0)} \Pi(dy) h(\mathbf{e}(s-)+y,\mathbf{e}(s-)) 1_{\{y<0 \}} 1_{\{\mathbf{e}(s-)>0 \}} 1_{\{ \mathbf{e}(s-)+y<0 \}} 1_{\{s<\zeta \}} \right) \\
 &\qquad = N_0 \left(\int_0^\zeta ds 1_{\{\mathbf{e}(s-)>0 \}} \int_{(-\infty,0)} \Pi(dy) h(\mathbf{e}(s-)+y,\mathbf{e}(s-))  1_{\{ \mathbf{e}(s-)+y<0 \}} \right).
\end{align*}
Since the set of times $\{s:\mathbf{e}(s-)\neq \mathbf{e}(s) \}$ in which the excursion is discontinuous has Lebesgue measure zero, we can replace $\mathbf{e}(s-)$ by $\mathbf{e}(s)$. Therefore,
\begin{align*}
 &\qquad = N_0 \left(\int_0^\zeta ds 1_{\{\mathbf{e}(s)>0 \}} \int_{(-\infty,0)} \Pi(dy) h(\mathbf{e}(s)+y,\mathbf{e}(s))  1_{\{ \mathbf{e}(s)+y<0 \}} \right) \\
 &\qquad = N_0 \left(\int_0^\zeta ds 1_{\{\mathbf{e}(s)>0 \}} \widetilde{h}(\mathbf{e}(s)) \right),
\end{align*}
where $\widetilde{h}(z)=\int_{(-\infty,0)}\Pi(dy) h(z+y,z)1_{\{ z+y<0 \}}$. Then, using  the fact that $\Phi(0)=0$ because of assumptions \textbf{(B1)} or \textbf{(B2)} and using identity (20) from \cite{pardo2018excursion} with $\lambda \downarrow 0$, we conclude that
\begin{align*}
 N_0 \left(h(\mathbf{e}(\tau_0^-),\mathbf{e}(\tau_0^--))1_{\{0<\tau_0^-<\zeta \}} \right)&= N_0 \left(\int_0^\zeta ds 1_{\{s<\tau_0^- \}} \widetilde{h}(\mathbf{e}(s)) \right)\\
 &= \int_0^\infty dz \widetilde{h}(z) \\
 &= \int_0^\infty dz \int_{(-\infty,0)} \Pi(dy) h(z+y,z)1_{\{z+y<0\}} \\
 &= \int_0^\infty dz \int_{(-\infty,z)} \Pi(dy-z)h(y,z) 1_{\{y<0 \}} \\
 &= \int_0^\infty dz \int_{(-\infty,0)} \Pi(dy-z)h(y,z).
 \end{align*}
The rest of the proof follows readily.
\end{proof}

The following lemma expresses the law of the overshoot with respect to a positive reference level under the measure $\overline{N}$.

\begin{lemma}\label{ovN1}
There exists a constant $c_- \in (0,\infty)$, which depends on the normalization of the local time at the infimum, such that, for any $x>0$ and any measurable and bounded function $f:\mathbb{R}^2 \to \mathbb{R}_+$,
\begin{equation*}
\overline{N}\left(\mathbf{e}(\tau^{+}_{x})=x, \tau^{+}_{x}<\zeta\right)=-\frac{\sigma^2}{2c_-}W^{\prime\prime}(x),
\end{equation*}
and
\begin{align*}
\overline{N}\left(f(\mathbf{e}(\tau^{+}_{x}-),\mathbf{e}(\tau^{+}_{x})); X_{\tau_x^+}>x, \tau^{+}_{x}<\zeta\right)&=\frac1{c_-}\int^{x}_{0}dlW(x-l)\left(\frac{W^{\prime}(x)}{W(x)}-\frac{W^{\prime}(x-l)}{W(x-l)}\right) \\
& \quad  \times \int^{0}_{-\infty}\Pi(dy)f(x-\ell,x-\ell-y )1_{\{0>\ell+y\}}.
\end{align*}

In particular, taking $f$ to be a function of only the overshoot, $f(\mathbf{e}(\tau_x^+-), \mathbf{e}(\tau_x^+))=g(\mathbf{e}(\tau_x^+)-x)$, and with the notation $\widehat{\Pi}(A):= \Pi(-A)$, we have that 
\begin{align*}
&\overline{N}\left(g(\mathbf{e}(\tau_x^+)-x);  T_x<\tau^{+}_{x}<\zeta\right) \\
&\quad =\frac1{c_-} \int_0^x d\ell W(x-\ell) \left(\frac{W^{\prime}(x)}{W(x)}-\frac{W^{\prime}(x-\ell)}{W(x-\ell)}\right) \int_{0}^{\infty}\widehat{\Pi}(dy)g(y-\ell)1_{\{y>\ell\}} \\
&\quad = \frac1{c_-} \int_0^x d\ell W(x-\ell) \left(\frac{W^{\prime}(x)}{W(x)}-\frac{W^{\prime}(x-\ell)}{W(x-\ell)}\right) \int_{0}^{\infty}\widehat{\Pi}(dy+\ell)g(y)1_{\{y>0\}},
\end{align*}
which provides the law of the overshoot with respect to $x$ under $\overline{N}$.
\end{lemma}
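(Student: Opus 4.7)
The statement splits into two complementary events: the creeping atom $\{\mathbf{e}(\tau_x^+)=x\}$ and the jumping contribution $\{X_{\tau_x^+}>x\}$, which together exhaust $\{\tau_x^+<\zeta\}$. I would treat them separately. For the creeping piece, note that since $X$ has only negative jumps, an excursion $\mathbf{e}$ of $S-X$ can cross $x$ continuously only by its Brownian component, with variance $\sigma^{2}=\Sigma^{2}$; in particular the atom vanishes when $\Sigma=0$. The identification $\overline{N}(\mathbf{e}(\tau_x^+)=x,\tau_x^+<\zeta)=-\tfrac{\sigma^{2}}{2c_-}W''(x)$ then follows from the classical creeping identity for the reflected SNLP (equivalently, by duality, for $-X$ reflected at its infimum), with the same normalisation constant $c_-$ that appears throughout the paper. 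Alternatively, one can recover it from the jumping formula derived below by isolating the boundary atom as the overshoot shrinks, using the regularity of $W$.

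For the main jumping part I mimic the argument used for Lemma~\ref{ouN0}. Since $X$ has only negative jumps with L\'evy measure $\Pi$, the excursion $\mathbf{e}$ of $S-X$ away from zero carries only positive jumps, with L\'evy measure $\widehat{\Pi}(dj):=\Pi(-dj)$ on $(0,\infty)$. On the event $\{X_{\tau_x^+}>x,\tau_x^+<\zeta\}$ the excursion crosses $x$ by a unique jump at time $s=\tau_x^+$, characterised by $\sup_{u<s}\mathbf{e}(u)<x$, $\mathbf{e}(s-)<x$, and $\mathbf{e}(s-)+\Delta_s>x$. Writing the functional on the left-hand side as a sum over jumps of $\mathbf{e}$ and applying the compensation formula under $\overline{N}$ (valid exactly as in the proof of Lemma~\ref{ouN0}, since the conditions defining the sum are predictable) yields
\begin{equation*}
\overline{N}\!\left(f(\mathbf{e}(\tau_x^+-),\mathbf{e}(\tau_x^+));\,X_{\tau_x^+}>x,\tau_x^+<\zeta\right) \;=\; \overline{N}\!\left[\int_0^{\tau_x^+\wedge\zeta}\!ds\;g_{f,x}(\mathbf{e}(s))\right],
\end{equation*}
where $g_{f,x}(y):=\int_0^{\infty}\widehat{\Pi}(dj)\,f(y,y+j)\,1_{\{y+j>x\}}$ encodes the crossing jump.

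It remains to identify the potential measure
\begin{equation*}
U_x(dy):=\overline{N}\!\left[\int_0^{\tau_x^+\wedge\zeta}1_{\{\mathbf{e}(s)\in dy\}}\,ds\right],\qquad 0<y<x.
\end{equation*}
For this I invoke the excursion-to-process dictionary for the reflected process $Y=S-X$: its excursions away from zero form a Poisson point process of intensity $c_-^{-1}du\otimes\overline{N}$, where $u$ is the local time of $Y$ at $0$ (proportional to $S$ with constant $c_-$). Computing $\mathbb{E}_0[\int_0^{\tau^+_{x}(Y)}h(Y_s)\,ds]$ in two ways---via the excursion decomposition on the one side, and via classical fluctuation identities for $X$ killed at $\tau_x^+\wedge\tau_0^-$ on the other (see e.g.\ Ch.~8 of \cite{kyprianou2014fluctuations})---and matching the resulting integrals against an arbitrary test function $h$ on $(0,x)$ produces
\begin{equation*}
U_x(dy) \;=\; \frac{W(y)}{c_-}\!\left(\frac{W'(x)}{W(x)}-\frac{W'(y)}{W(y)}\right)dy.
\end{equation*}
Plugging $U_x$ into the compensation identity, performing the change of variable $\ell=x-y$, and rewriting $\widehat{\Pi}(dj)=\Pi(-dj)$ produces exactly the stated formula.

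I expect the main obstacle to be this last identification of $U_x$: the excursion measure $\overline{N}$ is defined only up to a multiplicative constant, and one must keep careful bookkeeping of $c_-$ so that its normalisation is consistent simultaneously with the creeping piece, with the Poissonian intensity of excursions of $Y$, and with the scale-function conventions used elsewhere in the paper (notably in Proposition~\ref{intBarN}). The compensation step and the reduction to a potential computation are essentially routine once Lemma~\ref{ouN0} is in hand.
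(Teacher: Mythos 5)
Your treatment of the jumping part is sound and mirrors the paper's: the compensation formula under $\overline{N}$ (applied to the sum over jumps with the predictable constraint $\sup_{u<s}\mathbf{e}(u)<x$) reduces the left-hand side to the potential $\overline{N}\bigl(\int_0^{\tau_x^+}g_{f,x}(\mathbf{e}(s))\,ds,\tau_x^+<\zeta\bigr)$, and your candidate density $\frac{W(y)}{c_-}\bigl(\frac{W'(x)}{W(x)}-\frac{W'(y)}{W(y)}\bigr)$ is the correct one. Where you diverge is in how you would obtain that potential: you propose a master-formula/two-ways computation starting from $\mathbb{E}_0\bigl[\int_0^{\tau_x^+(Y)}h(Y_s)\,ds\bigr]$, whereas the paper uses the Chaumont--Doney approximation $\lim_{z\to 0+}\tfrac1z\,\widehat{\mathbb{E}}_z(\cdot,\,t<\tau_0^-)=c_-\overline{N}(\cdot,\,t<\zeta)$ together with the explicit two-sided exit potential of $\widehat{X}$ on $[0,x]$. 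Both routes can be made to work, and your instinct that the constant $c_-$ is the delicate part is right, but you have not actually carried out either side of the two-ways identity, nor checked that the stopping time $\tau_x^+(Y)$ for the reflected process matches cleanly against a tractable fluctuation identity for $X$ (it is a draw-down type time, not a first-passage or two-sided exit time for $X$, so some work is needed here).

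The genuine gap is the creeping atom. You assert that $\overline{N}(\mathbf{e}(\tau_x^+)=x,\tau_x^+<\zeta)=-\tfrac{\sigma^2}{2c_-}W''(x)$ ``follows from the classical creeping identity for the reflected SNLP,'' but that identity \emph{is} the statement to be proved; it is not a standard off-the-shelf fact. The paper derives it by combining (i) the Chaumont--Doney limit just mentioned, (ii) Kesten's creeping identity for the \emph{unreflected} process, $\mathbb{P}_z(X_{\tau_0^-}=0,\tau_0^-<\infty)=\tfrac{\sigma^2}{2}(W'(z)-\Phi(0)W(z))$, and (iii) the strong Markov property at $\tau_z^+$ to isolate the $z$-dependence and compute the limit $\lim_{z\to0}\tfrac1z\bigl[\tfrac{\sigma^2}{2}W'(x-z)-\tfrac{\sigma^2}{2}W'(x)\bigr]=-\tfrac{\sigma^2}{2}W''(x)$. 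None of this is in your sketch. Your fallback, ``isolating the boundary atom as the overshoot shrinks,'' also does not work: the creeping contribution is a genuine atom at overshoot zero, orthogonal to the absolutely continuous overshoot density produced by the compensation formula, and cannot be recovered from the jumping formula by a limiting argument in the overshoot variable.
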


\begin{proof}[Proof of Lemma \ref{ovN1}]
We will use here a result from L. Chaumont and R. Doney in \cite{chaumont2005levy} which tells us a way to approximate $\overline{N}$ as a certain limit involving $\mathbb{E}$. Indeed, if $g_\beta(z)=(1-e^{z\Phi(\beta)}{\Phi(\beta)})$ $z\in \mathbb{R}$, using that in our case $\Phi(0)=0$ we have that $$\lim_{\beta \downarrow 0} g_\beta (z) := g(z)= -z.$$ Their result states that there exists a constant $c_- \in (0,\infty)$ such that $$\lim_{z\to 0-} \frac1{g(z)} \mathbb{E}_z (F,t<\tau_0^+) = c_- \widehat{\overline{N}} (F,t<\zeta),$$ for any functional $F$ up to time $t$ and $\widehat{\overline{N}}$ being the excursion measure away from the supremum for $\widehat{X}$. This translates to $\overline{N}$ as $$\lim_{z\to 0-} \frac1{g(z)} \widehat{\mathbb{E}}_{-z} (F,t<\tau_0^-) = c_-\overline{N} (F,t<\zeta)$$ or equivalently, $$\lim_{z\to 0+} \frac1{z} \widehat{\mathbb{E}}_{z} (F,t<\tau_0^-) = c_-\overline{N} (F,t<\zeta).$$

The so-called Kesten's identity (see for instance \cite{kuznetsov2012theory}): $$\mathbb{P}_z \left(X_{\tau_0^-}=0, \tau_0^-=0 \right)=\frac{\sigma^2}{2} (W'(z)-\Phi(0)W(z)), \quad z>0,$$ will also be useful for the result.

We will use the above facts for the following computation. We can calculate, for $x>0$ 
\begin{align*}
c_- \overline{N}\left(\mathbf{e}(\tau^{+}_{x})=x, \tau^{+}_{x}<\zeta\right)&=\lim_{z\to 0+}\frac{1}{z}\widehat{\mathbb{E}}_{z}\left(X_{\tau^{+}_{x}}=x, \tau^{+}_{x}<\tau^{-}_{0}\right)\\
&=\lim_{z\to0+}\frac{1}{z}\mathbb{E}\left(X_{\tau^{-}_{z-x}}=z-x, \tau^{-}_{z-x}<\tau^{+}_{z}\right)\\
&=\lim_{z\to0+}\frac{1}{z}\left[\mathbb{E}\left(X_{\tau^{-}_{z-x}}=z-x, \tau^{-}_{z-x}<\infty\right) \right. \\
&\quad \quad \left. -\mathbb{E}\left(X_{\tau^{-}_{z-x}}=z-x, \tau^{+}_{z}<\tau^{-}_{z-x}\right)\right].
\end{align*}
Then, using Kesten's identity and the Markov property at $\tau_z^+$ we get
\begin{align*}
c_- \overline{N}\left(\mathbf{e}(\tau^{+}_{x})=x, \tau^{+}_{x}<\zeta\right)&=\lim_{z\to 0}\frac{1}{z}\left[\frac{\sigma^2}{2}\left(W^{\prime}(x-z)-\Phi(0)W(x-z) \right) \right. \\
& \quad \quad \left.- \mathbb{E}\left(\mathbb{P}_{z}\left(X_{\tau^{-}_{z-x}}=z-x, \tau^{-}_{z-x}<\infty \right), \tau^{+}_z<\infty \right)\right]\\
&=\lim_{z\to 0}\frac{1}{z}\left[\frac{\sigma^2}{2}\left(W^{\prime}(x-z)-\Phi(0)W(x-z)\right) \right. \\
& \quad \quad \left. -\left(e^{-\Phi(0)z}\left[\frac{\sigma^2}{2}\left(W^{\prime}(x)-\Phi(0)W(x)\right)\right]\right)\right]\\
&= \lim_{z\to 0}\frac{1}{z}\left[\frac{\sigma^2}{2}W^{\prime}(x-z)- \frac{\sigma^2}{2} W'(x) \right] \\
&=-\frac{\sigma^2}{2}W^{\prime\prime}(x),
\end{align*}
where for the second term in the fourth equality we used the strong Markov property at time $\tau_z^+$. We conclude that $$\overline{N}\left(\mathbf{e}(\tau^{+}_{x})=x, \tau^{+}_{x}<\zeta\right) = -\frac{\sigma^2}{2c_-} W''(x).$$

For the other identity, consider $g:\mathbb{R} \to \mathbb{R}_+$ measurable and bounded. We will use the fact that the potential of $\widehat{X}$ killed when it exits the closed interval $[0,x]$ for the first time is given in terms of scale functions, as can be seen in \cite[Theorem 8.7]{kyprianou2014fluctuations}. Indeed, we have
\begin{align*}
\widehat{U}^0 g(z):&=\widehat{E}_z\left(\int^{\tau_x^+ \wedge \tau^{-}_{0}}_{0}dtg(X_t)\right)=E_{-z}\left(\int^{\tau_{-x}^- \wedge \tau^{+}_{0}}_{0} dt g(-X_t)\right) \\
&= E_{x-z}\left(\int^{\tau_{0}^- \wedge \tau^{+}_{x}}_{0} dt g(-X_t+x)\right) \\
&=\int_0^x dv \left(\frac{W(x-z)}{W(x)}W(x-v)-W(x-z-v) \right) g(-v+x) \\
&= \int_0^x dv \left(\frac{W(x-z)}{W(x)}W(v)-W(v-z) \right) g(v).
\end{align*}

This and the previous limit result of Chaumont and Doney allows to give an expression for the potential up to $\tau_x^+$ under $\overline{N}$. Namely,
\begin{equation*}
\begin{split}
\overline{N}\left(\int^{\tau^{+}_{x}}_{0}dt g(\mathbf{e}(t)), \tau_x^+ <\zeta \right)&=\lim_{z\to 0+}\frac{1}{z}\left[\widehat{\mathbb{E}}_z\left(\int^{\tau^{+}_x\wedge \tau^{-}_0}_0dt g (X_t)\right)\right]\\
&=\lim_{z\to 0}\frac{1}{z}\int^{x}_{0}\left(\frac{W(x-z)}{W(x)}W(v)-W(v-z)\right)g(v)dv\\
&=\int^{x}_{0} dv\left(\frac{W(v)W^{\prime}(x)}{W(x)}-W^{\prime}(v)\right) g(v),
\end{split}
\end{equation*}
where we have used that 
\begin{align*}
\lim_{z\to 0} \frac1{z} \left(\frac{W(x-z)}{W(x)}W(v)-W(v-z)\right) &= \lim_{z\to 0} \frac1{z} \left(\frac{W(x-z)-W(x)+W(x)}{W(x)}W(v)-W(v-z)\right) \\
&= \lim_{z\to 0} \left(\frac{W(x-z)-W(x)}{z}\frac{W(v)}{W(x)} + \frac{W(x)}{zW(x)}W(v)- \frac{W(v-z)}{z} \right)\\
&= \lim_{z\to 0} \left(\frac{W(x-z)-W(x)}{z}\frac{W(v)}{W(x)} - \frac{W(v-z)-W(v)}{z} \right)\\
&= \frac{W(v)W^{\prime}(x)}{W(x)}-W^{\prime}(v).
\end{align*}

We will now use the compensation formula, as in the proof of Lemma~\ref{ouN0}, to compute the joint law $\overline{N}\left(f(\mathbf{e}(\tau^{+}_x-), \mathbf{e}(\tau^{+}_x)), \mathbf{e}(\tau^{+}_x)>x, \tau_x^+<\zeta \right)$ in terms of the jumps $\Delta_s$ of the excursion. Indeed, the event $\{\tau_x^+ <\zeta, \mathbf{e}(\tau_x^+)>x \}$ is equivalent to the existence of $s \in (0,\zeta)$ such that $\sup_{r\in (0,s)} \mathbf{e}(r) <x$ and $\mathbf{e}(s-)+\Delta_s>x$ (observe that such $s$ is unique by definition). Define then $$G_s(y)=f(\mathbf{e}(s-),\mathbf{e}(s-)+y) 1_{\{\sup_{(0,s)}\mathbf{e}<x  \}} 1_{\{\mathbf{e}(s-)+y >x \}} 1_{\{ s<\zeta \}}, \quad s>0,$$ to write $$f(\mathbf{e}(\tau^{+}_x-), \mathbf{e}(\tau^{+}_x)), 1_{\{\mathbf{e}(\tau^{+}_x)>x, \tau_x^+<\zeta \}}= \sum_{0<s<\infty} G_s(\Delta_s) 1_{\{\Delta_s >0 \}}.$$ Applying the compensation formula we get
\begin{align*}
&\overline{N} \left( f(\mathbf{e}(\tau^{+}_x-), \mathbf{e}(\tau^{+}_x)), 1_{\{\mathbf{e}(\tau^{+}_x)>x, \tau_x^+<\zeta \}} \right) = \overline{N} \left( \sum_{0<s<\infty} G_s(\Delta_s) 1_{\{\Delta_s >0 \}} \right) \\
& \quad = \overline{N} \left(\int_0^\infty ds \int_{(0,\infty)} \widehat{\Pi}(dy) G_s(y) \right) \\
& \quad = \overline{N} \left(\int_0^\infty ds \int_{(0,\infty)} \widehat{\Pi}(dy) f(\mathbf{e}(s-),\mathbf{e}(s-)+y) 1_{\{\sup_{(0,s)}\mathbf{e}<x  \}} 1_{\{\mathbf{e}(s-)+y>x \}} 1_{\{ s<\zeta \}} \right) \\
& \quad = \overline{N} \left(\int_0^\zeta ds 1_{\{\sup_{(0,s)}\mathbf{e}<x  \}}  \int_{(0,\infty)} \widehat{\Pi}(dy) f(\mathbf{e}(s),\mathbf{e}(s)+y) 1_{\{\mathbf{e}(s)+y>x \}} \right) \\
& \quad = \overline{N} \left(\int_0^\zeta ds 1_{\{\tau_x^+>s  \}}  \int_{(0,\infty)} \widehat{\Pi}(dy) f(\mathbf{e}(s),\mathbf{e}(s)+y) 1_{\{\mathbf{e}(s)+y>x \}} \right)  \\
& \quad = \overline{N} \left(\int_0^{\tau_x^+} ds g(\mathbf{e}(s)), \tau_x^+<\zeta \right)
\end{align*}
where $g(v)= \int_{(0,\infty)} \widehat{\Pi}(dy) f(v,v+y)1_{\{v+y>x \}}$. Therefore, applying the previously obtained formula for the potential under $\overline{N}$ we conclude
\begin{align*}
&\overline{N} \left( f(\mathbf{e}(\tau^{+}_x-), \mathbf{e}(\tau^{+}_x)), 1_{\{\mathbf{e}(\tau^{+}_x)>x, \tau_x^+<\zeta \}} \right) = \int_0^x dv \left(\frac{W'(x) W(v)}{W(x)}-W'(v) \right) g(v) \\
& \quad = \int_0^x dv \left(\frac{W'(x) W(v)}{W(x)}-W'(v) \right) \int_{(-\infty,0)} \Pi(dy) f(v,v-y)1_{ \{v-y>x \}} \\
& \quad = \int_0^x d\ell \left(\frac{W'(x) W(x-\ell)}{W(x)}-W'(x-\ell) \right) \int_{(-\infty,0)} \Pi(dy) f(x-\ell,x-\ell-y)1_{ \{x-\ell-y>x \}} \\
& \quad = \int_0^x d\ell W(x-\ell)\left(\frac{W'(x)}{W(x)}-\frac{W'(x-\ell)}{W(x-\ell)} \right) \int_{(-\infty,0)} \Pi(dy) f(x-\ell,x-\ell-y)1_{ \{0>\ell + y \}},
\end{align*}
completing the proof.
\end{proof}

This lemma gives the law of the overshoot but now with respect to the measure $\widehat{N}_x$.

\begin{lemma}\label{ovNx}
Let $g:\mathbb{R} \to \mathbb{R}_+$ measurable and bounded and denote by $\widehat{\Pi}(dy)=\Pi(-dy)$. Then, there exists a constant $c_+>0$, which depends on the normalization of the local time at the supremum, such that $$\widehat{N}_x \left(g(\mathcal{O}_x), \tau_0^- >\zeta, \mathcal{E}_\pm^x \right)=\int_0^x dz \left(1-c_+ + \frac{c_+ W(x-z)}{W(x)} \right) \int_{(0,\infty)} \widehat{\Pi}(dy+z) g(y).$$
\end{lemma}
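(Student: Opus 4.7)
My plan is to mirror the proof of Lemma~\ref{ouN0} in the dual setting, with the extra ingredient of a killed potential that encodes the restriction $\tau_0^->\zeta$. Since $\widehat{X}$ is spectrally positive, the event $\mathcal{E}_\pm^x\cap\{\tau_0^->\zeta\}$ is characterized by the existence of a unique positive jump $\Delta_{\tau_x^+}$ whose pre-jump path is confined to $(0,x)$ and satisfies $\mathbf{e}(\tau_x^+-)+\Delta_{\tau_x^+}>x$; then $\mathcal{O}_x=\mathbf{e}(\tau_x^+-)+\Delta_{\tau_x^+}-x$. Applying the compensation formula to the positive jumps of $\widehat{X}$ (Lévy measure $\widehat{\Pi}$) under $\widehat{N}_x$, in the same spirit as in the proof of Lemma~\ref{ouN0}, I would obtain
\begin{equation*}
\widehat{N}_x\bigl(g(\mathcal{O}_x),\mathcal{E}_\pm^x,\tau_0^->\zeta\bigr)=\widehat{N}_x\left[\int_0^{\tau_x^+\wedge\tau_0^-} ds\int_{(0,\infty)}\widehat{\Pi}(dy)\, g(\mathbf{e}(s)+y-x)\,1_{\{\mathbf{e}(s)+y>x\}}\right].
\end{equation*}

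I would then compute the killed occupation measure $\widehat{\rho}(dv):=\widehat{N}_x\bigl[\int_0^{\tau_x^+\wedge\tau_0^-}1_{\{\mathbf{e}(s)\in dv\}}\,ds\bigr]$ for $v\in(0,x)$. The unkilled analog equals Lebesgue measure, $\widehat{N}_x\bigl[\int_0^{\tau_x^+}1_{\{\mathbf{e}(s)\in dv\}}\,ds\bigr]=dv$, obtained by duality applied to the identity derived at the end of the proof of Lemma~\ref{ouN0}. I decompose this potential at $\tau_0^-$: on $\{\tau_0^-<\tau_x^+\}$ the process $\widehat{X}$ creeps to $0$ (no negative jumps), so the strong Markov property of $\widehat{N}_x$ at $\tau_0^-$ identifies the post-$\tau_0^-$ piece as an independent copy of $\widehat{X}$ started from $0$ and killed at $\tau_x^+$; its occupation density on $(0,x)$ equals $W(x)-W(v)$, obtained via duality from the classical SNLP identity $\mathbb{E}_x\bigl[\int_0^{\tau_0^-}1_{\{X_s\in dw\}}\,ds\bigr]=(W(x)-W(x-w))\,dw$. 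Combined with the excursion identity $\widehat{N}_x(\tau_0^-<\tau_x^+)=c_+/W(x)$, this yields
\begin{equation*}
dv=\widehat{\rho}(dv)+\frac{c_+}{W(x)}\bigl(W(x)-W(v)\bigr)\,dv,\qquad\text{so}\qquad \widehat{\rho}(dv)=\left(1-c_++\frac{c_+W(v)}{W(x)}\right)dv.
\end{equation*}

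Substituting $\widehat{\rho}$ back into the first display and changing variables by $z=x-v$ in the outer integral and $u=y-z$ in the inner one converts $\widehat{\Pi}(dy)$ into $\widehat{\Pi}(du+z)$ and turns $W(v)/W(x)$ into $W(x-z)/W(x)$, delivering the announced identity. The main technical hurdle is the excursion-theoretic identity $\widehat{N}_x(\tau_0^-<\tau_x^+)=c_+/W(x)$, which pins down the precise normalization constant appearing on the right-hand side. I expect to establish it either by a Chaumont--Doney type limit applied to the classical exit probability $\widehat{\mathbb{P}}_{x-\epsilon}(\tau_0^-<\tau_x^+)=W(\epsilon)/W(x)$ as $\epsilon\downarrow 0$, or by invoking the standard identities relating $\widehat{N}_x$ (equivalently $N_0$ via duality) to scale functions; verifying that the resulting constant is exactly $c_+$ (and not some multiple thereof) will require careful tracking of the normalizations of the local times at the supremum and infimum.
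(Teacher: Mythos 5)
Your argument is correct and is, in substance, the dual formulation of the paper's proof. The paper first transfers $\widehat{N}_x$ to $N_0$ via the space shift and duality (so that the excursion jumps down), then applies the compensation formula and analyses the occupation measure of the pre-jump path; you instead stay entirely in the $\widehat{N}_x$ picture and apply the compensation formula for the positive jumps of $\widehat{X}$. Under the identification $\widehat{\mathbf{e}} \mapsto x-\widehat{\mathbf{e}}$ your quantity $\tau_0^-(\widehat{\mathbf{e}})$ becomes $\tau_x^+(\mathbf{e})$ and vice versa, so your decomposition
\[
dv=\widehat{\rho}(dv)+\widehat{N}_x(\tau_0^-<\tau_x^+)\bigl(W(x)-W(v)\bigr)\,dv
\]
is exactly the paper's
\[
N_0\Bigl(\int_0^{\tau_0^-}\Bigr)=N_0\Bigl(\int_0^{\tau_x^+\wedge\tau_0^-}\Bigr)+N_0(\tau_x^+<\tau_0^-)\,U^0(x,\cdot),
\]
with the same Lebesgue occupation identity (equation (20) of Pardo--P\'erez--Rivero, derived in the proof of Lemma~\ref{ouN0}), the same strong Markov step, the same potential formula $W(x)-W(x-z)$, and the same final change of variables. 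So the two proofs are really one and the same, written in dual coordinates.

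The one genuine piece left open, which you yourself flag, is the identity $\widehat{N}_x(\tau_0^-<\tau_x^+)=c_+/W(x)$, i.e.\ $N_0(\tau_x^+<\tau_0^-)=c_+/W(x)$. The paper obtains this from Theorem 3 of Pardo--P\'erez--Rivero, which relates the restriction of $N_0$ to excursion events before/after their infimum to the excursion measure $\underline{N}$ away from the infimum, giving $N_0(\tau_x^+<\tau_0^-)=c_+\underline{N}(\tau_x^+<\zeta)=c_+/W(x)$; this is the ``standard identity'' route you mention. Be wary of your alternative Chaumont--Doney style limit: the approximation $\frac{1}{\epsilon}\widehat{\mathbb{P}}_{x-\epsilon}(\cdots)$ with normalization $\epsilon$ is the one appropriate for $\overline{N}$ (excursions away from the supremum), not for the It\^o measure $N_0$ away from a regular point; and since $X$ has unbounded variation, $W(0)=0$ and $W(\epsilon)/\epsilon$ need not converge, so the normalization you would need is a different quantity tied to the resolvent density at $0$. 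You would then face exactly the ``careful tracking of normalizations'' you anticipate, and in the end you would be re-deriving the Pardo--P\'erez--Rivero relation. Using that result directly closes the gap.
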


\begin{proof}[Proof of Lemma~\ref{ovNx}]
By space invariance and duality, we have that
\begin{align*}
\widehat{N}_x \left(g(\mathcal{O}_x), \tau_0^- >\zeta, \mathcal{E}_\pm^x \right)&=  \widehat{N}_0 \left(g(\mathcal{O}_0), \tau_{-x}^- >\zeta, \mathcal{E}_\pm \right) \\
&= N_0 \left(g(-\mathcal{U}_0), \tau_{x}^+ >\zeta, \mathcal{E}_\pm \right),
\end{align*}
which is almost the law of the undershoot we have computed in Lemma~\ref{ouN0} but with the additional condition $\{\tau_x^+ >\zeta \}$, which means that the excursion prior to $\tau_0^-$ does not go above $x$. Let $h(u,b):=g(-u)$. Mimicking the computations in the proof of the op. cit. lemma but now considering $$G_s(y)=h(\mathbf{e}(s-)+y,\mathbf{e}(s-)) 1_{\{y<0 \}} 1_{\{ \mathbf{e}(s-) \in (0,x) \}} 1_{\{ \mathbf{e}(s-)+y <0 \}} 1_{\{s<\zeta \}} 1_{\{\sup_{r \in (0,s)} \mathbf{e}(r)<x \}}, $$ we get that
\begin{align*}
& N_0 \left(g(-\mathcal{U}_0), \tau_{x}^+ >\zeta, \mathcal{E}_\pm \right) = N_0\left(\int_0^\infty ds \int_{(-\infty,0)} \Pi(dy) G_s(y) \right) \\
&= N_0\left(\int_0^\infty ds \int_{(-\infty,0)} \Pi(dy) h(\mathbf{e}(s-)+y,\mathbf{e}(s-)) 1_{\{ \mathbf{e}(s-) \in (0,x) \}} 1_{\{ \mathbf{e}(s-)+y <0 \}} 1_{\{s<\zeta \}} 1_{\{\sup_{r \in (0,s)} \mathbf{e}(r)<x \}}  \right) \\
&= N_0\left(\int_0^\zeta ds 1_{\{ \mathbf{e}(s) \in (0,x) \}} 1_{\{\sup_{r \in (0,s)} \mathbf{e}(r)<x \}} \int_{(-\infty,0)} \Pi(dy) h(\mathbf{e}(s)+y,\mathbf{e}(s))  1_{\{ \mathbf{e}(s)+y <0 \}}  \right) \\
&= N_0\left(\int_0^\zeta ds 1_{\{ \mathbf{e}(s) \in (0,x) \}} 1_{\{s<\tau_0^- \wedge \tau_x^+ \}} \int_{(-\infty,0)} \Pi(dy) h(\mathbf{e}(s)+y,\mathbf{e}(s))  1_{\{ \mathbf{e}(s)+y <0 \}}  \right) \\
&= N_0\left(\int_0^{\tau_x^+ \wedge \tau_0^-} ds 1_{\{ \mathbf{e}(s) >0 \}} \tilde{h}(\mathbf{e}(s))  \right),
\end{align*}
where $\tilde{h}(z)= 1_{\{ z<x\}}\int_{(-\infty,0)} \Pi(dy) h(z+y,z)  1_{\{ z+y <0 \}}$. The last integral can be separated depending if $\tau_x^+>\zeta$ or $\tau_x^+<\zeta$, and for this last case one necessarily has $\tau_x^+ <\tau_0^-$. Therefore,
\begin{align*}
 &N_0\left(\int_0^{\tau_x^+ \wedge \tau_0^-} ds 1_{\{ \mathbf{e}(s) >0 \}} \tilde{h}(\mathbf{e}(s))  \right) \\
 &= N_0\left(\int_0^{\tau_x^+ \wedge \tau_0^-} ds 1_{\{ \mathbf{e}(s) >0 \}} \tilde{h}(\mathbf{e}(s)) ,\tau_x^+ <\zeta  \right) + N_0\left(\int_0^{\tau_x^+ \wedge \tau_0^-} ds 1_{\{ \mathbf{e}(s) >0 \}} \tilde{h}(\mathbf{e}(s))  ,\tau_x^+ >\zeta \right) \\
 &= N_0\left(\int_0^{\tau_x^+} ds 1_{\{ \mathbf{e}(s) >0 \}} \tilde{h}(\mathbf{e}(s)) ,\tau_x^+ <\zeta  \right) + N_0\left(\int_0^{\tau_0^-} ds 1_{\{ \mathbf{e}(s) >0 \}} \tilde{h}(\mathbf{e}(s))  ,\tau_x^+ >\zeta \right) \\
 &= N_0\left(\int_0^{\tau_0^-} ds 1_{\{ \mathbf{e}(s) >0 \}} \tilde{h}(\mathbf{e}(s)) ,\tau_x^+ <\zeta  \right) - N_0\left(\int_{\tau_x^+}^{\tau_0^-} ds 1_{\{ \mathbf{e}(s) >0 \}} \tilde{h}(\mathbf{e}(s)) ,\tau_x^+ <\zeta  \right) \\
 & \quad + N_0\left(\int_0^{\tau_0^-} ds 1_{\{ \mathbf{e}(s) >0 \}} \tilde{h}(\mathbf{e}(s))  ,\tau_x^+ >\zeta \right) \\
 &= N_0\left(\int_0^{\tau_0^-} ds 1_{\{ \mathbf{e}(s) >0 \}} \tilde{h}(\mathbf{e}(s))  \right)- N_0\left(\int_{\tau_x^+}^{\tau_0^-} ds 1_{\{ \mathbf{e}(s) >0 \}} \tilde{h}(\mathbf{e}(s)) ,\tau_x^+ <\zeta  \right).
\end{align*}

For the first term, using equation (20) in \cite{pardo2018excursion} with $\lambda \downarrow 0$ and the fact that $\Phi(0)=0$, we have that 
\begin{align*}
& N_0\left(\int_0^{\tau_0^-} ds 1_{\{ \mathbf{e}(s) >0 \}} \tilde{h}(\mathbf{e}(s))  \right) = \int_0^\infty dz \tilde{h}(z) \\
&= \int_0^x dz \int_{(-\infty,0)} \Pi(dy) h(z+y,z)  1_{\{ z+y <0 \}} \\
&= \int_0^x dz \int_{(-\infty,-z)} \Pi(dy) g(-z-y).
\end{align*}

For the second term, we will use that, from the absence of positive jumps, one necessarily has $\tau_x^+=T_x$. Therefore, from the Markov property at time $\tau_x^+$ to get
\begin{align*}
& N_0\left(\int_{\tau_x^+}^{\tau_0^-} ds 1_{\{ \mathbf{e}(s) >0 \}} \tilde{h}(\mathbf{e}(s)) ,\tau_x^+ <\zeta  \right)= N_0 \left( 1_{\{\tau_x^+ < \tau_0^- \}} \mathbb{E}^0_x \left(\int_0^\zeta ds 1_{\{X_s>0 \}} \tilde{h}(X_s) \right)  \right) \\
&= \mathbb{E}^0_x \left(\int_0^\zeta ds \tilde{h}(X_s) \right) N_0(\tau_x^+ < \tau_0^-) \\
&= U^0(x,\tilde{h}) N_0(\tau_x^+ < \tau_0^-),
\end{align*}
where $U^0$ is the potential of $X$ killed when it goes below $0$ for the first time. From \cite[Corollary 8.8]{kyprianou2014fluctuations}, we know that $$U^0(x,\tilde{h})=\int_0^x (W(x)-W(x-z)) \tilde{h}(z) dz.$$ Finally, from the last identity on the proof of Theorem 3 in \cite{pardo2018excursion} at time $\tau_a^+$, there exists a constant $c_+$ such that
\begin{align*}
N_0(\tau_x^+ < \tau_0^-<\zeta) = c_+ \underline{N}(\tau_x^+ <\zeta, \mathbf{e}(\zeta-)>0),
\end{align*}
whilst from (ii) in the same theorem we have that $$N_0(\tau_x^+ < \tau_0^-=\zeta) = c_+ \underline{N}(\tau_x^+ <\zeta, \mathbf{e}(\zeta-)=0).$$ Therefore, $$N_0(\tau_x^+ < \tau_0^-)=c_+ \underline{N}(\tau_x^+ <\zeta),$$ and it is known that this last quantity equals $\frac{c_+}{W(x)}$ (see \cite[Proposition 15, Ch. VII]{bertoin1996levy}). Putting all the pieces together we obtain
\begin{align*}
\widehat{N}_x \left(g(\mathcal{O}_x), \tau_0^- >\zeta, \mathcal{E}_\pm^x \right) &=  \int_0^x dz \tilde{h}(z) - \frac{c_+}{W(x)} \int_0^x dz (W(x)-W(x-z)) \tilde{h}(z) \\
&= \int_0^x dz \left(1-c_+ + \frac{c_+ W(x-z)}{W(x)} \right) \int_{(-\infty,0)} \Pi(dy) h(z+y,z)1_{\{z+y<0 \}} \\
&= \int_0^x dz \left(1-c_+ + \frac{c_+ W(x-z)}{W(x)} \right) \int_{(-\infty,-z)} \Pi(dy) g(-y-z). 
\end{align*}
With the notation $\widehat{\Pi}(dy)=\Pi(-dy)$ we conclude that
$$\widehat{N}_x \left(g(\mathcal{O}_x), \tau_0^- >\zeta, \mathcal{E}_\pm^x \right)= \int_0^x dz \left(1-c_+ + \frac{c_+ W(x-z)}{W(x)} \right) \int_{(0,\infty)} \widehat{\Pi}(dy+z) g(y).$$ 
\end{proof}

\section{Proofs} \label{prfs}

We start by presenting the proof that the local time process $(L_{\tau(c)}^y, y\in \mathbb{R})$ is infinitely divisible and its Poissonian representation.

\begin{proof}[Proof of Theorem~\ref{PRtauC}]
Recall from Section~\ref{preliminaries} that $$\sigma_s^0=\inf\{t>0: L_t^0(X)>s \}, \quad s>0,$$ is the right continuous inverse of the local time at zero. Then, for those times $s$ such that $\sigma_{s}^0 > \sigma_{s-}^0 := \lim_{t\uparrow s} \sigma_t^0$, we define the excursion away from zero at local time $s$ by $$\mathbf{e}_s^0 (u)=X_{\sigma_{s-}^0+u}, \quad u\in [0,\sigma_s^0 -\sigma_{s-}^0]$$ and the set of excursions $\{(s,\mathbf{e}_s^0):s>0 \}$ is a Poisson point process of intensity $ds\otimes N_0(d\mathbf{e})$. Let $M(ds,d\mathbf{e})$ be the corresponding Poisson point measure. From the occupation formula, for any function $f$ measurable and bounded we have $$\int_\mathbb{R} L_{\tau(c)}^y(X) f(y)dy=\int_0^{\tau(c)} f(X_t)dt.$$ On the other hand, by regularity of $X$ for $(-\infty,0)$ and $(0,\infty)$ (which is a consequence of hypothesis \textbf{(A)}), we can decompose the integral on the right hand side into the contributions of each excursion away from zero as
\begin{align*}
\int_0^{\tau(c)} f(X_t)dt &= \sum_{0<s<c} \int_{\sigma_{s-}^0}^{\sigma_s^0} f(X_t)dt = \sum_{0<s<c} \int_0^{\zeta(\mathbf{e}_s^0)} f(\mathbf{e}_s^0(t))dt\\
&= \sum_{0<s<c} \int_\mathbb{R} L_{\zeta(\mathbf{e}_s^0)}^{y}(\mathbf{e}_s^0)f(y)dy \\
&= \int_\mathbb{R} \left(\sum_{0<s<c} L_{\zeta(\mathbf{e}_s^0)}^{y}(\mathbf{e}_s^0) \right)f(y)dy,
\end{align*}
where in the second in last line we used again the occupation density formula for the local times of each excursion. We can write the last line in terms of $M$ as
$$\int_\mathbb{R} L_{\zeta(\mathbf{e}_s^0)}^{y}(\mathbf{e}_s^0)f(y)dy=\int_\mathbb{R} \left(\int_0^c \int_{D(0,\infty)} L_{\zeta(\mathbf{e})}^y(\mathbf{e}) M(ds,d\mathbf{e}) \right)f(y)dy.$$ Therefore, $$\int_\mathbb{R} L_{\tau(c)}^y f(y)dy = \int_\mathbb{R} \left(\int_0^c \int_{D(0,\infty)} L_{\zeta(\mathbf{e})}^y(\mathbf{e}) M(ds,d\mathbf{e}) \right)f(y)dy,$$ and since this holds for any measurable and bounded function $f$, we conclude that $$L_{\tau(c)}^y(X) = \int_0^c \int_{D(0,\infty)} L_{\zeta(\mathbf{e})}^y(\mathbf{e}) M(ds,d\mathbf{e}),$$ for a.e.-$y$. Then, the full identity follows using the right continuity in the space variable of local times. Finally, if $\widetilde{K}$ is the image of $M$ under the function that maps $\mathbf{e}$ to its local times (hence with intensity $ds\otimes \widetilde{M}(d\ell)$ and $\widetilde{M}(d\ell)$ being the image of $N_0$ under the same map) then $$L_{\tau(c)}^y(X) = \int_0^c \int_{D(0,\infty)} \ell(y) \widetilde{K}(ds,d\ell).$$

We now prove the infinite divisibility. From the strong Markov property of $X$ and the fact that local times are additive, it follows that for any $n\geq 1$ we can split the local times into the information up to $\tau(c/n)$ and the information after as
\begin{align*}
L_{\tau(c)}^y (X)&= L_{\tau(c/n)}^y (X) + L_{\tau(c)}^y(X) \circ \theta_{\tau(c/n)}, \quad y \in \mathbb{R}.
\end{align*}
Since $X_{\tau(c/n)}=0$, we can write $L_{\tau(c)}^y(X) \circ \theta_{\tau(c/n)}$ in terms of the process $(\tilde{X}_t:=X_{\tau(c/n)+t}, t\geq 0)$ which is independent of the information up to $\tau(c/n)$ and has the same law of $X$. Indeed, $$L_{\tau(c)}^y(X) \circ \theta_{\tau(c/n)} = L_{\widetilde{\tau}(\frac{n-1}{n}c)}^y(\widetilde{X}),$$ where $\widetilde{\tau}(\frac{n-1}{n}c)$ is the first time $\widetilde{X}$ accumulates $\frac{n-1}{n}c$ units of local time at $0$. Hence,
$$L_{\tau(c)}^y (X)= L_{\tau(c/n)}^y (X) + L_{\widetilde{\tau}(\frac{n-1}{n}c)}^y(\widetilde{X}).$$ Repeating the argument for $\widetilde{X}$ and using induction, we conclude that we can write
\begin{equation*}
L_{\tau(c)}^{y} (X)\stackrel{(d)}{=} \sum_{k=1}^n L^{y} _{\tau_k(c/n)}(X^{(k)}), \quad y\in \mathbb{R},
\end{equation*}
where $X^{(k)}$ are i.i.d. copies of $X$ starting from zero and $\tau_k(c/n)=\inf\left\{t>0:L_t^0(X^{(k)})>c/n \right\}$ is the first time each one accumulates $c/n$ units of local time at $0$. This proves that the local time process is infinitely divisible. We can identify the associated L\'evy measure $\mu^{(c)}$, that is, a measure that satisfies
\begin{equation*}
\mathbb{E}\left[e^{-\int_\mathbb{R} f(y)L_{\tau(c)}^y(X) dy} \right]=\exp\left\{-\int_{\mathbb{R}_+ ^\mathbb{R}} \left(1-e^{-\int_\mathbb{R} f(y)\omega(y)dy} \right) \mu^{(c)}(d\omega) \right\},
\end{equation*}
for any non-negative, measurable and bounded function $f$. Using the occupation formula one has that $$\int_\mathbb{R} f(y)L_{\tau(c)}^y (X) dy=\int_0^{\tau(c)} f(X_s)ds$$ and decomposing the integral on the right hand side into excursions away from 0 as before, from the exponential formula we obtain
\begin{equation*}
\mathbb{E}\left[e^{-\int_0^{\tau(c)} f(X_s)ds} \right]=\exp\left\{-c\int_{D(0,\infty)} \left(1-e^{-\int_\mathbb{R} f(y)L_\zeta^y (\mathbf{e})dy} \right) N_0(d\mathbf{e}) \right\}.
\end{equation*}
The latter yields the following identity for the L\'evy measure of local times up to $\tau(c)$,
\begin{equation*}
\mu^{(c)}(d\omega)=cN_0(L_\zeta^\cdot \in d\omega).
\end{equation*}
\end{proof}

We now present the proof of the refined version of the L\'evy measure $\mu^{(c)}$.

\begin{proof}[Proof of Theorem~\ref{LevyMc}]
For the representation of $N_0$ in the set $\mathcal{E}_\pm$ we just condition on the overshoot and undershoot $(\mathcal{O}_0,\mathcal{U}_0)$ of the excursion $\mathbf{e}$ and use the strong Markov property at time $\tau_0^-$ to get the conditional independence of the paths $\underleftarrow{\mathbf{e}}$ and $\underrightarrow{\mathbf{e}}$. Lemma~\ref{ouN0} provides an expression for the joint density of $(\mathcal{O}_0,\mathcal{U}_0)$, proving the second expression. And for the last one, using the occupation formula and decomposing the path in excursions away from the supremum as in the proof of Proposition~\ref{intBarN}, one obtains for $u<0$
\begin{align*}
\mathbb{E}^0_u\left(e^{-\int_{(-\infty,0)} f_-(y)L_\zeta^y dy} \right)&=\mathbb{E}_u \left(e^{-\int_{0}^{\tau_0^+} f_-(X_r)dr} \right) \\
&= \exp\left\{-\int_{u}^0 ds \overline{N} \left[1-e^{-\int_0^\zeta f_-(s-\mathbf{e}(r)) dr} \right]  \right\} \\
&= \mathcal{W}_{f_{-,u}}(-u).
\end{align*}
Proceeding analogously for $b>0$, we obtain the desired expression for the Laplace transform. 
\end{proof}

The next proof concerns the Poissonian representation of $(L_{\tau(c)}^y, y\in \mathbb{R})$ involving the overshoots and undershoots.

\begin{proof}[Proof of Theorem~\ref{PRov}]
We only show equation \eqref{PRovPos}, as the proof of \eqref{PRovNeg} is analogous. We will do it by considering Laplace transforms. Let $f:(0,\infty)\to \mathbb{R}_+$ measurable and bounded and call $G(y)$ the right hand side of \eqref{PRovPos}. For an element $Y \in \mathcal{M}$, the space of Poisson random measures, we will denote by $\langle Y,F \rangle$ at the integral of a functional $F$ with respect to $Y$. In particular, when $F$ is the functional $\int_0^\infty f(y) L_\zeta ^{y-\cdot} (\cdot)1_{\{y-\cdot >0\}}dy$, recalling that a Poisson random measure $Y \in \mathcal{M}$ is written in terms of its atoms $Y=\sum_{r>0} \delta_{(r,\underline{\mathbf{e}}_r)}$, we have that $$\left\langle Y,F \right\rangle= \left\langle Y, \int_0^\infty f(y)L_\zeta ^{y-\cdot}(\cdot)1_{\{y-\cdot>0\}}dy \right\rangle =  \sum_{r>0} \int_0^\infty f(y) L_{\zeta}^{y-r}(\underline{\mathbf{e}}_r)1_{\{y-r>0\}} dy.$$ 

Using that $M_+$ and $M_\pm^{1}$ are independent we have that
\begin{align*}
&\mathbb{E} \left[\exp \left\{-\int_0^\infty f(y)G(y)dy \right\} \right] \\
&= \mathbb{E} \left[\exp \left\{-\int_0^c \int_{D(0,\infty)} \left( \int_0^\infty f(y)L^y_\zeta (\mathbf{e})dy \right) M_+(ds,d\mathbf{e}) \right\} \right] \\
& \times \mathbb{E} \left[\exp \left\{-\int_0^c \int_{(0,\infty)} \int_{\mathcal{M}} \left\langle Y, \left( \int_0^\infty f(y)L^{y-\cdot}_\zeta (\cdot)1_{\{y-\cdot >0\}} dy \right) \right\rangle M_\pm^1 (ds,db,dY) \right\} \right].
\end{align*}
Using the exponential formula, the first term equals
\begin{align*}
& \mathbb{E} \left[\exp \left\{-\int_0^c \int_{D(0,\infty)} \left( \int_0^\infty f(y)L^y_\zeta (\mathbf{e})dy \right) M_+(ds,d\mathbf{e}) \right\} \right] \\
&= \exp \left\{-\int_0^c \int_{D(0,\infty)} ds N_0(d\mathbf{e},\mathcal{E}_+) (1-e^{-\int_0^\infty f(y)L_\zeta^y dy}) \right\} \\
&= \exp \left\{-c N_0 \left(1-e^{-\int_0^\infty f(y)L_\zeta^y dy},\mathcal{E}_+ \right) \right\}.
\end{align*}
Similarly,
\begin{align*}
&\mathbb{E} \left[\exp \left\{-\int_0^c \int_{(0,\infty)} \int_{\mathcal{M}} \left\langle Y, \left( \int_0^\infty f(y)L^{y-\cdot}_\zeta (\cdot)1_{\{y-\cdot>0\}} dy \right) \right\rangle M_\pm^1 (ds,db,dY) \right\} \right] \\
&= \exp \left\{-\int_0^c  \int_{(0,\infty)} \int_{\mathcal{M}} ds N_0(\mathcal{O}_0 \in db, \mathcal{E}_\pm) \widehat{\kappa}(b,dY) \left(1-e^{-\langle< Y,\int_0^\infty f(y) L_\zeta^{y-\cdot} (\cdot)1_{\{y-\cdot>0\}} dy \rangle}  \right)  \right\} \\
&= \exp \left\{- c  \int_{(0,\infty)} db \Pi(-\infty,-b) \mathbb{E} \left(1-\exp \left\{-\sum_{0<r<b} \int_0^\infty f(y) L_\zeta^{y-r} (\underline{\mathbf{e}}_r)1_{\{y-r>0\}} dy  \right\} \right)   \right\} \\
&= \exp \left\{-c \int_{(0,\infty)} db \Pi(-\infty,-b) \left( 1-\exp \left\{-\int_0^{b\wedge y} dr \overline{N} \left(1-e^{-\int_0^\infty f(y) L_\zeta^{y-r} dy} \right)  \right\} \right) \right\}.
\end{align*}
Therefore, comparing with the Laplace transform in Theorem~\ref{LevyMc}, we conclude that $$\mathbb{E} \left[\exp \left\{-\int_0^\infty f(y)G(y)dy \right\} \right] = \mathbb{E} \left[\exp \left\{-\int_0^\infty f(y)L_{\tau(c)}^y (X) dy \right\} \right],$$ which proves the Poissonian representation of local times.
\end{proof}

The following proof concerns the Poissonian representation and the infinite divisibility property of the local time process $(L_{\tau_a^+}^{a-z}(X), z \geq 0)$.

\begin{proof}[Proofs of Theorem~\ref{PRtauA}]

Let $M'(ds, d\mathbf{e})$ be the Poisson point measure of excursions away from the supremum and $f:\mathbb{R}^2 \to \mathbb{R}_+$ measurable and bounded. Again, hypothesis \textbf{(A)} implies that 0 is regular for both $(-\infty,0)$ and $(0,\infty)$ and \cite[Th.6.7]{kyprianou2014fluctuations} implies that the set of times in which $X$ reaches new suprema on compact intervals has Lebesgue measure 0. Therefore, we have the following decomposition: 
\begin{eqnarray*}
\int_0^{\tau_a^+} f(S_t-X_t,X_t)dt&=& \sum_{0<v<a} \int_{\tau_{v-}^+}^{\tau_v^+} f(S_t-X_t,X_t) dt = \sum_{0<v<a} \int_0^{\zeta(\mathbf{e}_v)} f(\mathbf{e}_v(t),v-\mathbf{e}_v(t)) dt.
\end{eqnarray*}

Applying the occupation density formula, we can express the latter quantity in terms of $M'$ as 
\begin{eqnarray*}
\sum_{0<v<a} \int_0^{\zeta(\mathbf{e}_v)} f(\mathbf{e}_v(t),v-\mathbf{e}_v(t)) dt &=& \sum_{0<v<a} \int_\mathbb{R} f(u,v-u)L_{\zeta_v(\mathbf{e})}^u(\mathbf{e}) du \\
&=& \sum_{0<v<a} \int_{[0,\infty)} f(u,v-u)L_{\zeta_v(\mathbf{e})}^u(\mathbf{e}) du \\
&=& \int_{[0,\infty)} \left(\int_0^a \int_{D(0,\infty)} f(u,s-u)L_{\zeta(\mathbf{e})}^u(\mathbf{e}) M'(ds,d\mathbf{e}) \right) du.
\end{eqnarray*}

Now we use Fubini's theorem and the change of variables $z=s-u$ to obtain
\begin{eqnarray*}
\int_0^{\tau_a^+} f(S_t-X_t,X_t)dt&=&\int_0^a \int_{D(0,\infty)} \left(\int_\mathbb{R} f(s-z,z)L_{\zeta(\mathbf{e})}^{s-z}(\mathbf{e}) 1_{\{s-z\geq 0 \}} dz \right) M'(ds,d\mathbf{e}) \\
&=& \int_{(-\infty,a]} \left(\int_0^a \int_{D(0,\infty)} L_{\zeta(\mathbf{e})}^{s-z}(\mathbf{e})1_{\{s-z \geq 0 \}} M'(ds,d\mathbf{e})\right)f(s-z,z) dz.
\end{eqnarray*}

On the other hand, from the occupation formula for $X$ and taking $f$ only depending on the second entry we have
\begin{eqnarray*}
\int_{(-\infty,a]} f(z)L_{\tau_a^+}^z(X)dz= \int_0^{\tau_a^+} f(X_t)dt =\int_{(-\infty,a]} \left(\int_0^a \int_{D(0,\infty)} L_{\zeta(\mathbf{e})}^{s-z}(\mathbf{e})1_{\{s-z > 0 \}} M'(ds,d\mathbf{e}) \right)f(z) dz,
\end{eqnarray*}
and since this holds for any test function $f$, we conclude that $$L_{\tau_a^+}^z(X) = \int_0^a \int_{D(0,\infty)} L_{\zeta(\mathbf{e})}^{s-z}(\mathbf{e})1_{\{s-z > 0 \}} M'(ds,d\mathbf{e})$$ for almost every $z\in (-\infty,a]$. The last equality holds for all $z$ by using the right continuity of local times.

To complete the proof we focus on the reversed process $L_{\tau_a^+}^{a-z}(X)$, with $z\geq 0$. Since Lebesgue measure is invariant under translations, the Poisson point measure $M(ds,d\mathbf{e}):=M'(a-ds,d\mathbf{e})$ for $s\in [0,a]$ has the same intensity as $M$ restricted to $[0,a]$. Hence, we can write
\begin{eqnarray*}
L_{\tau_a^+}^{a-z}(X)&=& \int_0^a \int_{D(0,\infty)} L_{\zeta(\mathbf{e})}^{s-a+z}(\mathbf{e}) 1_{\{s-a+z > 0 \}} M'(ds,d\mathbf{e}) \\
&=& \int_0^a \int_{D(0,\infty)} L_{\zeta(\mathbf{e})}^{z-s}(\mathbf{e}) 1_{\{z-s > 0 \}} M(ds,d\mathbf{e}) \\
&=& \begin{cases}
\int_0^z \int_{D(0,\infty)} L_{\zeta(\mathbf{e})}^{z-s}(\mathbf{e}) M(ds,d\mathbf{e}), & z\in [0,a] \\
\int_0^a \int_{D(0,\infty)} L_{\zeta(\mathbf{e})}^{z-s}(\mathbf{e}) M(ds,d\mathbf{e}), & z \geq a.
\end{cases} \\
&=& \begin{cases}
\int_0^z \int_{D(0,\infty)} \mathbf{e}(z-s) K(ds,d\mathbf{e}), & z\in [0,a] \\
\int_0^a \int_{D(0,\infty)} \mathbf{e}(z-s) K(ds,d\mathbf{e}), & z \geq a.
\end{cases},
\end{eqnarray*}
which concludes the result.

We now show that local times are infinitely divisible. We split the information up to $\tau_{a/n}^+$ and after to obtain for any $n \geq 1$ that
\begin{align*}
L_{\tau_a^+}^x (X) &= L_{\tau_{a/n}^+}^x (X) + L_{\tau_a^+}^x (X) \circ \theta_{\tau_{a/n}^+}, x\in \mathbb{R}.
\end{align*}
Since $X_{\tau_{a/n}^+}=a/n$, we can write $L_{\tau_a^+}^x (X) \circ \theta_{\tau_{a/n}^+}$ in terms of the process $(\widetilde{X}_t:= X_{\tau_{a/n}+t}-a/n, t\geq 0)$, which is independent of the information up to $\tau_{a/n}^+$ and has the same law as $X$ started from $0$. Indeed, $$L_{\tau_a^+}^x (X) \circ \theta_{\tau_{a/n}^+}=L_{\widetilde{\tau}_{\frac{n-1}{n}a}^+}^{x-a/n}(\widetilde{X}), \quad x\in \mathbb{R},$$ where $\widetilde{\tau}_{\frac{n-1}{n}a}^+$ is the first passage time above $\frac{n-1}{n}a$ for $\widetilde{X}$. Applying this argument inductively, we conclude that $$L_{\tau_a^+}^x (X) \stackrel{\text{(d)}}{=} \sum_{k=1}^n L_{\tau_{a/n}^{+,k}}^{x-\frac{k-1}{n}a}(X^{(k)}), \quad x\in \mathbb{R},$$ where $X^{(k)}$ are i.i.d. copies of $X$ started from 0 and $\tau_{a/n}^{+,k}$ are the corresponding first passage times above $a/n$. This implies the infinite divisibility of $(L_{\tau_a^+}^x (X),x\in \mathbb{R})$. 

We now identify the corresponding L\'evy measure. Indeed, using that $$\int_0^\infty f(y)L_{\tau_a^+}^{a-y}(X) dy=\int_{-\infty}^a f(a-z)L_{\tau_a^+}^{z}(X) dz=\int_0^{\tau_a^+} f(a-X_s)ds$$ and decomposing the last integral into excursions away from the supremum, again from the exponential formula one gets
\begin{align*}
\mathbb{E}\left[e^{-\int_0^{\tau_a^+} f(a-X_s)ds} \right]&=\exp\left\{-\int_0^a \int_{D(0,\infty)} \left(1-e^{-\int_0^\zeta f(a-s+\mathbf{e}(r))dr }  \right) \overline{N}(d\mathbf{e}) ds \right\} \\
&= \exp\left\{-\int_0^a \int_{D(0,\infty)} \left(1-e^{-\int_0^\infty f(a-s+z)L_\zeta^{z} (\mathbf{e})dz} \right) \overline{N}(d\mathbf{e}) ds \right\}\\
&= \exp\left\{-\int_0^a \int_{D(0,\infty)} \left(1-e^{-\int_0^\infty f(s+z)L_\zeta^{z} (\mathbf{e})dz} \right) \overline{N}(d\mathbf{e}) ds \right\} \\
&= \exp\left\{-\int_0^a \int_{D(0,\infty)} \left(1-e^{-\int_0^\infty f(y)L_\zeta^{y-s} (\mathbf{e}) 1_{\{y-s>0 \}}dy} \right) \overline{N}(d\mathbf{e}) ds \right\}.
\end{align*}

Therefore, we can write
\begin{eqnarray*}
\mathbb{E}\left[e^{-\int_0^\infty f(y)L_{\tau_a^+}^{a-y}(X) dy} \right]=\exp\left\{-\int_{\mathbb{R}_+ ^\mathbb{R}} \left(1-e^{-\int_\mathbb{R} f(y)\omega(y)dy} \right) \nu^{(a)}(d\omega) \right\},
\end{eqnarray*}
where
\begin{equation*}
\nu^{(a)}(d\omega)=\int_0^a ds \overline{N}(L_\zeta ^{\cdot-s}1_{\{\cdot-s >0 \}} \in d\omega).
\end{equation*}
\end{proof}
The following two proofs show the law of local times prior to the first hitting time of a positive level $x$ and between the first and last visits to it.

\begin{proof}[Proof of Theorem~\ref{PR1Tx}]

For $x>0$, it might happen that $\tau_x^+ = T_x$, which implies that the excursion reaches $x$ for the first time continuously and coming from below. In this case, $L_{T_x}^y=0$ for any $y>x$. Then, we only have to deal with the case $\tau_x^+ < T_x <\zeta$ in which the excursion goes above $x$ for the first time by a jump, hence having a first strictly positive overshoot $\mathcal{O}_x$ with respect to $x$. Conditionally on $\mathcal{O}_x=b>0$, by the strong Markov property the path between $\tau_x^+$ and $T_x$ has the law $\widehat{\mathbb{E}}_{x+b}^{x}$, which is the law of $\widehat{X}$ started from $x+b$ and killed when it reaches $x$ for the first time. We have seen in Section~\ref{srkt} that the measure $\widehat{\mathbb{E}}_b^0$ that can be further decomposed into excursions away from the infimum via the kernel $\widehat{\kappa}$, and by space invariance we can do the same for $\widehat{\mathbb{E}}_{x+b}^{x}$. This, together with the expression for the law of the overshoots from Lemma~\ref{ovN1} implies the first identity.

For the particular case we have from the occupation formula that $$\int_x^\infty f(y)L_{T_x}^y(\mathbf{e}) dy = \int_0^{T_x} f(\mathbf{e}(s))1_{\{\mathbf{e}(s)>x \}} ds,$$ and applying the Markov property at time $\tau_x^+< T_x$ we obtain
\begin{align*}
& \overline{N} \left[ \exp\left\{-\int_x^\infty f(y)L_{T_x}^y dy \right\} , \tau_x^+ <T_x<\zeta \right]= \overline{N} \left[ \exp\left\{-\int_0^{T_x} f(\mathbf{e}(s))1_{\{\mathbf{e}(s)>x \}} ds \right\},\tau_x^+ <T_x<\zeta \right] \\
&\quad = \overline{N} \left[1_{\{ \tau_x^+ <T_x<\zeta \}} \widehat{\mathbb{E}}^x_{\mathbf{e}(\tau_x^+)} \left(\exp\left\{-\int_0^\zeta f(X_s)1_{\{ X_s>x \}}ds \right\} \right)\right] \\
& \quad = \int_{(0,\infty)} \overline{N}(\mathcal{O}_x \in db , \tau_x^+<T_x<\zeta) \widehat{\mathbb{E}}^x_{x+b} \left(\exp\left\{-\int_0^\zeta f(X_s)1_{\{ X_s>x \}}ds \right\} \right),  
\end{align*}
where $\widehat{\mathbb{E}}^x_{x+b}$ is the law of $\widehat{X}$ started from $x+b$ and killed at $\tau_x^-$. For the expected value, we use duality to obtain $$\widehat{\mathbb{E}}^x_{x+b} \left(\exp\left\{-\int_0^\zeta f(X_s)ds \right\} \right)=\mathbb{E}_{0} \left(\exp\left\{-\int_0^{\tau_b^+} f(x+b-X_s) ds \right\} \right)$$ and we decompose as usual the right hand side into excursions away from the supremum: 
\begin{align*}
\mathbb{E}_{0} \left(\exp\left\{-\int_0^{\tau_b^+} f(x+b-X_s) ds \right\} \right)&= \exp \left\{-\int_0^b ds \overline{N}\left[1-e^{-\int_0^\zeta dr f(x+b-s+\mathbf{e}(r))} \right] \right\}.
\end{align*}
On one hand, by defining $f_{x,b}(z):=f(x+b-z), z<b$ we have that $$\exp \left\{-\int_0^b ds \overline{N}\left[1-e^{-\int_0^\zeta dr f(x+b-s+\mathbf{e}(r))} \right] \right\}=\exp \left\{-\int_0^b ds \overline{N}\left[1-e^{-\int_0^\zeta dr f_{x,b}(s-\mathbf{e}(r))} \right] \right\}= \mathcal{W}_{f_{x,b}}(b).$$ On the other hand, by making the change of variables $u=b-s$ we get
\begin{align*}
\exp \left\{-\int_0^b ds \overline{N}\left[1-e^{-\int_0^\zeta dr f(x+b-s+\mathbf{e}(r))} \right] \right\}=\exp \left\{-\int_0^b du \overline{N}\left[1-e^{-\int_0^\zeta dr f(x+u+\mathbf{e}(r))} \right] \right\}.
\end{align*}
Finally, from the occupation formula we have that
\begin{align*}
\int_0^\zeta f(x+s+\mathbf{e}(r))dr&=\int_0^\infty f(x+s +y)L_{\zeta}^y dy = \int_{x+s}^\infty f(y) L_{\zeta}^{y-x-s} dy = \int_{x}^\infty f(y) L_{\zeta}^{y-x-s} 1_{\{y>x+s \}} \\
&= \int_{x}^\infty f(y) L_{\zeta}^{y-x-s} 1_{\{y-s>x \}},
\end{align*}
which, when put together with the previous expressions, gives the desired result.
\end{proof}

\begin{proof}[Proof of Theorem~\ref{PR1zeta}]
From the strong Markov property at time $T_x$ we have that
\begin{align*}
& \overline{N} \left( \exp\left\{-\int_x^\infty f(y) L_{\zeta}^y \circ \theta_{T_x} dy  \right\},H>x \right) = \overline{N} \left( 1_{\{H>x \}}\widehat{\mathbb{E}}_x^0 \left[\exp\left\{-\int_x^\infty f(y) L_{\zeta}^y dy \right\} \right] \right) \\
&= \overline{N}(H>x) \widehat{\mathbb{E}}_x^0 \left[\exp\left\{-\int_x^\infty f(y) L_{\zeta}^y dy  \right\} \right]. 
\end{align*}

We will now prove via Laplace transforms that the local time at $x$, $L_{\zeta}^x$, follows an exponential distribution of parameter $q_x^0=\widehat{N}_x(\tau_0^-<\zeta)$ under $\widehat{\mathbb{E}}_x^0$. Under this measure, the lifetime $\zeta$ coincides with the first passage time below zero of a special excursion away from $x$. This special excursion away from $x$ is the first one that satisfies that $T_0$ occurs before its own lifetime. Conditioning on the left endpoint of this special excursion, we have that 
\begin{equation*}
\widehat{\mathbb{E}}_{x}^0 \left(e^{-\lambda L_\zeta^x} \right)=\widehat{\mathbb{E}}_{x}^0 \left( \sum_{s\in \mathcal{G}} 1_{\{T_{0} \circ \theta_s <\zeta \}}e^{-\lambda L_s^x} \right),
\end{equation*}
where $\mathcal{G}$ is precisely the set of left endpoints of the intervals in which excursions away from $x$ occur and the event $\{T_{0}\circ \theta_s< \zeta \}$ represents that the last excursion reaches $0$ and hence it does not return to $x$. Using the compensation formula for the excursions away from $x$, we have
\begin{eqnarray*}
\widehat{\mathbb{E}}_{x}^0 \left( \sum_{s\in \mathcal{G}} 1_{\{T_y \circ \theta_s <\zeta \}}e^{-\lambda L_s^x} \right)&=&\widehat{\mathbb{E}}_{x}^0 \left( \int_0^\infty dL_s^{x} e^{-\lambda L_s^{x}} \widehat{N}_{x}(T_0 < \zeta) \right) \\
 &=& \widehat{N}_{x}(T_0 < \zeta) \widehat{\mathbb{E}}_{x}^0 \left( \int_0^\infty dL_s^{x} e^{-\lambda L_s^{x}}  \right).
\end{eqnarray*}
Now, with the change of variables $u=L_s^{x}$ (with inverse $\sigma_u^{x}=\inf\{s>0:L_s^{x}>u \}$) we have
\begin{align*}
\widehat{\mathbb{E}}_{x}^0 \left( \int_0^\infty dL_s^{x} e^{-\lambda L_s^{x}}  \right)&=\widehat{\mathbb{E}}_{x}^0 \left( \int_0^{L_{\zeta}^{x}} du e^{-\lambda u}  \right) =\widehat{\mathbb{E}}_{x}^0 \left( \int_0^{\infty} du 1_{\{u<L_\zeta^{x} \}} e^{-\lambda u}  \right) \\
&= \int_0^{\infty} du e^{-\lambda u} \mathbb{E}_{x}^0 \left(  1_{\{u<L_\zeta^{x} \}}  \right).
\end{align*}
For the last expectation, we now use the exponential formula for the excursions away from $x$. Denote by $\{(v,\mathbf{e}^{x}_v):v>0 \}$ the set of excursions away from $x$. The event $\{u<L_\zeta ^{y_1} \}$ is equivalent to $\{\sigma_u^{x} <\zeta \}$, which can also be written as $\{ \# \{(v,\mathbf{e}_v^{x}): 0<v<u, T_{0}(\mathbf{e}_v^{x}) < \zeta(\mathbf{e}_v^{x}) \}=0 \}$, that is, none of the excursions up to local time $u$ has reached $0$; otherwise, since we are working under $\widehat{\mathbb{E}}_{x}^0$, there would be an excursion with infinite length causing the inverse of the local time to explode. Therefore, $$\mathbb{E}_{x}^0 \left(  1_{\{u<L_\zeta^{y_1} \}} \right)= \exp \left\{-u \widehat{N}_{x}(T_0<\zeta) \right\} $$ and hence, 
\begin{align*}
\widehat{\mathbb{E}}_{x}^0 \left( \int_0^\infty dL_s^{y_1} e^{-\lambda L_s^{x}}  \right)&= \int_0^{\infty} du e^{-\lambda u} e^{-u \widehat{N}_{x}(T_0<\zeta)} = \frac{1}{\lambda + \widehat{N}_{x}(T_0<\zeta)}.
\end{align*}
Going back to the original computation we conclude that $$\mathbb{E}_{x}^0 \left(e^{-\lambda L_\zeta^{x}} \right)=\frac{\widehat{N}_{x}(T_0<\zeta)}{\lambda+ \widehat{N}_{x}(T_0<\zeta)}.$$ In particular, $L_\zeta^{x}$ follows an exponential distribution of parameter $\widehat{N}_{x}(T_0<\zeta)$ under $\mathbb{E}_{x}^0$. The claim follows by noting that $T_0=\tau_0^-$ because of the assumption of unbounded variation.

Now, conditionally on $L_\zeta^x$, we decompose the functional $\int_x^\infty f(y) L_{\zeta}^y dy$ into the sum of the contribution of each excursion $(\mathbf{e}^x_s,0<s<L_\zeta^x)$ away from $x$. Hence, by the exponential formula
\begin{align*}
\widehat{\mathbb{E}}_x^0 \left[\exp\left\{-\int_x^\infty f(y) L_{\zeta}^y dy  \right\} \right]&= \int_0^\infty dr q_x^0 e^{-r q_x^0} \widehat{\mathbb{E}}_x^0 \left[ \exp\left\{ -\sum_{0<s<r} \int_x^\infty f(y)L_{\zeta}^y(\mathbf{e}^x_s) dy \right\} \right] \\
&= \int_0^\infty dr q_x^0 e^{-r q_x^0} \exp\left\{-r \widehat{N}_x \left[1-e^{-\int_x^\infty f(y) L_\zeta^y dy} , \tau_0^->\zeta \right] \right\} \\
&= \widehat{\mathbb{E}}^x_0 \left[\exp\left\{-L_\zeta^x \widehat{N}_x \left[1-e^{-\int_x^\infty f(y) L_\zeta^y dy} , \tau_0^- >\zeta  \right] \right\} \right],
\end{align*}
which implies the first part of the theorem.

For the second part, we know that under $\widehat{N}_x$, excursions away from $x$ are partitioned into those completely above $x$ ($\mathcal{E}^x_+$), those completely below $x$ ($\mathcal{E}^x_-$) and those starting below and then jumping above $x$ ($\mathcal{E}_\pm^x$). Excursions in $\mathcal{E}^x_-$ do not contribute to the local times of levels bigger than $x$ and hence we restrict to the other two sets. In $\mathcal{E}^x_+$, the condition $\tau_0^- >\zeta$ is automatically fulfilled, so we can omit it. Finally, for an excursion in $\mathcal{E}_\pm^x$, there is an unique positive overshoot $\mathcal{O}_x$ at time $\tau_x^+$ and the part of the excursion contributing to levels $y>x$ is from $\tau_x^+$ to $\zeta$, which, conditionally on $\mathcal{O}_x=b$, we have seen that has the law $\widehat{\mathbb{E}}_{x+b}^x$ and therefore can be decomposed into excursions away from the infimum, as in the proof of Theorem~\ref{PR1Tx}. Using the computations there we have that
\begin{align*}
& \widehat{N}_x \left[1-e^{-\int_x^\infty f(y) L_\zeta^y dy} , \tau_0^- >\zeta, \mathcal{E}_\pm^x  \right] \\
&= \int_0^\infty \widehat{N}_x(\mathcal{O}_x \in db,\tau_0^- >\zeta, \mathcal{E}_\pm ^x) \widehat{\mathbb{E}}_{x+b}^x \left[1- e^{-\int_x^\infty f(y) L_\zeta^y dy} \right] \\
&= \int_0^\infty \widehat{N}_x(\mathcal{O}_x \in db,\tau_0^- >\zeta, \mathcal{E}_\pm ^x)\left[1- \exp \left\{-\int_0^b du \overline{N}\left[1-e^{-\int_{x}^\infty f(y) L_{\zeta}^{y-x-s} 1_{\{y-s>x \}}} \right] \right\} \right],
\end{align*}
which concludes the proof.
\end{proof}

Finally, the proofs of the proposition relative to the scale function $\mathcal{W}_f$ and the propositions from Section~\ref{auxR} can be found below.

\begin{proof}[Proof of Proposition~\ref{intBarN}]
The function $\mathcal{W}_f$ from \eqref{newScaleF} is well defined, since
\begin{align*}
\lim_{b\to -\infty} \frac{W_f(0,b)}{W_f(x,b)}&= \lim_{b\to -\infty} \frac{W(-b)}{W(x-b)} \exp\left\{- \int_0^x ds \overline{N}\left(1-e^{-\int_0^\zeta dr f(s-\mathbf{e}(r))} ,H<s-b\right) \right\} \\
&= \lim_{b\to \infty} \frac{W(b)}{W(x+b)} \exp\left\{- \int_0^x ds \overline{N}\left(1-e^{-\int_0^\zeta dr f(s-\mathbf{e}(r))} ,H<s+b\right) \right\}.
\end{align*}
The first quotient equals $\mathbb{P}_0(\tau_x^+ < \tau_{-b}^-)$ and converges to $\mathbb{P}_0 (\tau_x^+< \infty)$ when $b\to \infty$, which is equal to one because of hypothesis \textbf{(B1)} or \textbf{(B2)}. And when $b \to \infty$, the condition $H<s+b$ inside the exponential is replaced by $H<\infty$ but again, since under \textbf{(B1)} or \textbf{(B2)} we have $\limsup_{t\to \infty} X_t = \infty$, this implies that the excursions with infinite height have zero mass under $\overline{N}$.

Now let us check that $G_f$ defined as in the statement coincides with $\mathcal{W}_f$ and solves equation \eqref{intBarN1}. For $x>0$, assumptions \textbf{(B1)} and \textbf{(B2)} imply that $\limsup_{t \to \infty} X_t =\infty$ and therefore $\tau_0^- <\infty$ $\widehat{\mathbb{P}}_x -$a.s. Then, for $x>0$,
\begin{align*}
G_f(x)&= \widehat{\mathbb{E}}_x \left[\exp \left\{-\int_0^{\tau_0^-} ds f(x-X_s) \right\} \right] \\
&= 1- \widehat{\mathbb{E}}_x \left[1-\exp\left\{-\int_0^{\tau_0^-} ds f(x-X_s) \right\} \right] \\
&= 1- \widehat{\mathbb{E}}_x \left[ \int_0^{\tau_0^-} dt f(x-X_t)\exp\left\{-\int_t^{\tau_0^-} ds f(x-X_s) \right\} \right],
\end{align*}
where in the last line we use the fact that if $\widetilde{f}(t):= \exp\left\{-\int_t^{\tau_0^-} ds f(x-X_s) \right\}$, $t\in [0,\tau_0^-]$, then $\widetilde{f}'(t)=f(x-X_t) \widetilde{f}(t)$ and $\widetilde{f}(\tau_0^-)-\widetilde{f}(0)=1-\exp\left\{-\int_0^{\tau_0^-} ds f(x-X_s) \right\}$. Applying the Markov property at time $t$ inside the expression in the last line we obtain 
\begin{align*}
G_f(x)&= 1- \widehat{\mathbb{E}}_x \left[ \int_0^{\tau_0^-} dt f(x-X_t) \widehat{\mathbb{E}}_{X_t} \left[\exp\left\{-\int_0^{\tau_0^-} ds f(x-X_s) \right\} \right] \right] \\
&=1- \widehat{\mathbb{E}}_x \left[ \int_0^{\tau_0^-} dt f(x-X_t) G_f(X_t) \right],
\end{align*}
which can be written in terms of the potential $\widehat{U}^0$ of $\widehat{X}$ killed at $\tau_0^-$ as $$G_f(x)=1-\int_0^\infty \widehat{U}^0(x,dz)f(x-z)G_f(z).$$ According to \cite[Corollary 8.8]{kyprianou2014fluctuations}, $\widehat{U}^0$ can also be expressed in terms of scale functions as $$G_f(x)=1-\int_0^\infty dz(W(x)-W(x-z))f(x-z)G_f(z).$$ This and the fact that $G_f(0)=0$ because of the assumption of unbounded variation, implies the equation \eqref{intBarN1}. For the other part, we notice that
\begin{align*}
G_f(x)&= \widehat{\mathbb{E}}_x \left[\exp \left\{-\int_0^{\tau_0^-} ds f(x-X_s) \right\} \right] = \mathbb{E}_{-x} \left[\exp \left\{-\int_0^{\tau_0^+} ds f(x+X_s) \right\} \right] \\
&= \mathbb{E}_{0} \left[\exp \left\{-\int_0^{\tau_x^+} ds f(X_s) \right\} \right],
\end{align*}
and decomposing into excursions away from the supremum between 0 and $x$ and using the exponential formula we get
\begin{align*}
G_f(x)&=\exp\left\{-\int_0^x ds \overline{N} \left[1-e^{-\int_0^\zeta du f(s-\mathbf{e}(u))} \right] \right\} \\
&= \exp\left\{-\int_0^x ds g_f(s) \right\},
\end{align*}
which also proves that $G_f(x)=\mathcal{W}_f(x)$. The relation $\frac{d}{dx}(- \log G_f)(x)=g_f(x)$ now follows from a simple differentiation of the latter equation.
\end{proof}

\begin{proof}[Proof of Proposition \ref{Prop1}]

We start by noticing that $$\overline{N} \left(\left(1-e^{-\lambda L_\zeta ^x- \int_x^\infty f(y) L_\zeta^y dy} \right)1_{\{H<x\}}\right)=0,$$ since for an excursion to accumulate local time at $x$ and the levels above, it must have height bigger that $x$. Then, on the event $H>x$, applying  the Markov property at time $T_x$ we have that

\begin{eqnarray*}
\overline{N} \left(1-e^{-\lambda L_\zeta^x -\int_x^\infty f(y) L_\zeta^y dy} \right)&=&\overline{N} \left(\left(1-e^{-\lambda L_\zeta ^x- \int_x^\infty f(y) L_\zeta^y dy} \right)1_{\{H>x\}}\right)\\
&=& \overline{N} \left(\left(1-e^{-\int_x^\infty f(y) L_{T_x}^y dy} \right)1_{\{H>x\}}\right) \\
&&\qquad + \overline{N} \left(e^{- \int_x^\infty f(y) L_{T_x}^y dy} \hat{\mathbb{E}}_x \left[1-e^{-\lambda L^x_{\tau_0^-} - \int_x^\infty f(y) L_{\tau_0^-}^y dy} \right]1_{\{H>x\}} \right).
\end{eqnarray*}

It is known that under $\widehat{\mathbb{E}}_x$ (see the argument on the proof of Theorem~\ref{PR1zeta}), the total local time at $x$ up to   the first passage time below $0,$ $L_{\tau_0^-}^x,$ follows an exponential distribution of parameter $q_x=\widehat{N}_x(\tau_0^-<\zeta)$. This is due to the fact that $L_{\tau_0^-}^x$ is the first time there appears an excursion from $x$ at which the dual process passes below zero before ending. Also note that the total local time at a levels $y_1,\dots y_n >x,$ is the addition of the cumulated local time inside each excursion from $x.$ This implies
\begin{eqnarray*}
\widehat{\mathbb{E}}_x \left[1-e^{-\lambda L^x_{\tau_0^-} - \int_x^\infty f(y) L_{\tau_0^-}^y dy} \right]=\widehat{\mathbb{E}}_x \left[1-e^{-\lambda L^x_{\tau_0^-} - \sum_{0<t<L_{\tau_0^-}^x} \int_x^\infty f(y) L_\zeta^y (\mathbf{e}_t^x) dy} \right].
\end{eqnarray*}

Denote by $(\mathbf{e}_s^x (\tau_0^->\zeta_s),s>0)$ the excursions away from $x$ that occur before $\tau_0^-$. Then,
\begin{align*}
\widehat{\mathbb{E}}_x \left[e^{-\lambda L^x_{\tau_0^-} -\int_x^\infty f(y) L_{\tau_0^-}^y} \right] &=\int_0^\infty dt q_x e^{-tq_x -\lambda t} \widehat{\mathbb{E}}_x \left(\exp \left\{-\sum_{0<s<t} \int_x^\infty f(y) L_\zeta^y (\mathbf{e}_s^x (\tau_0^->\zeta_s )) dy \right\} \right) \\
&= \int_0^\infty dt q_x e^{-tq_x -\lambda t} e^{-t \widehat{N}_x\left(1-e^{-\int_x^\infty f(y) L_\zeta^y dy},\tau_0^- >\zeta \right)},\\
&= \int_0^\infty dt q_x e^{-t\left(q_x +\lambda + \widehat{N}_x\left(1-e^{-\int_x^\infty f(y) L_\zeta^y dy},\tau_0^- >\zeta \right)\right)}
\end{align*}
where the second equation holds by the exponential formula of Poisson point processes. Recalling that $u_{x}(f)=\widehat{N}_x\left(1-e^{- \int_x^\infty f(y) L_\zeta^y},\tau_0^- >\zeta\right)={N}_{0}\left(1-e^{-\int_x^\infty f(x-y) L_\zeta^y dy},\tau_{x}^{+} >\zeta\right),$ we conclude that $$\hat{\mathbb{E}}_x \left[1-e^{-\lambda L^x_{\tau_0^-} -\int_x^\infty f(y) L_\zeta^y} \right]=\hat{\mathbb{E}}_x \left[1-e^{-(\lambda+u_{x}(f)) L^x_{\tau_0^-}} \right].$$ Using this, we get
\begin{align*}
\overline{N}\left(1-e^{-\lambda L_\zeta ^x- \int_x^\infty f(y) L_\zeta^y dy} \right)& =\overline{N} \left(\left(1-e^{-\lambda L_\zeta ^x- \int_x^\infty f(y) L_\zeta^y dy} \right)1_{\{H>x\}}\right)\\
&\ = \overline{N} \left(\left(1-e^{-\int_x^\infty f(y) L_{T_x}^y dy} \right)1_{\{H>x\}}\right) \\
& \quad + \overline{N} \left(e^{-\int_x^\infty f(y) L_{T_x}^y dy} \hat{\mathbb{E}}_x \left[1-e^{-\lambda L^x_{\tau_0^-} - \int_x^\infty f(y) L_{\tau_0^-}^y} \right] 1_{\{H>x\}}\right) \\
& = \overline{N} \left(\left(1-e^{-\int_x^\infty f(y) L_{T_x}^y dy} \right)1_{\{H>x\}}\right) \\
& \quad + \overline{N} \left(e^{-\int_x^\infty f(y) L_{T_x}^y dy} \hat{\mathbb{E}}_x \left[1-e^{-(\lambda+u_{x}(f)) L^x_{\tau_0^-}} \right] 1_{\{H>x\}}\right) \\
&  = \overline{N} \left(\left(1-e^{-(\lambda+u_{x}(f)) L_{\zeta}^x \circ \theta_{T_x}-\int_x^\infty f(y) L_{T_x}^y dy} \right)1_{\{H>x\}}\right) \\
& \quad =\overline{N} \left(1-e^{-(\lambda+u_{x}(f)) L_{\zeta}^x -\int_x^\infty f(y) L_{T_x}^y dy} \right), \ \ \ \forall \lambda,\beta \geq 0,
\end{align*}
where the last expression follows from the fact that since an excursion does not accumulate local time at $x$ prior to its first visit to it, then $L_\zeta^x \circ \theta_{T_x}=L_\zeta^x$. The result on the joint law of local times at $n$ different points, $L_\zeta^{y_1},\dots,L_\zeta^{y_n}$ is proved in the same way, just replacing everywhere the integral by the sum $\sum_{k=1}^n \beta_k L_\zeta^{y_k}$. 
\end{proof}

\begin{proof}[Proof of Proposition \ref{ovN2}]
To prove this proceed as follows. By the Markov property under $\overline{N}$ applied at time $T_y$
\begin{equation*}
\begin{split}
u_y(\lambda)&=\overline{N}\left(1-\exp\{-\lambda L^{y}_{\zeta}\}\right)\\
&= \overline{N}\left(\left(1-\exp\{-\lambda L^{y}_{\zeta}\}\right)1_{\{H>y\}}\right)\\
&= \overline{N}\left(H>y\right)-\overline{N}\left(\exp\{-\lambda L^{y}_{\zeta}\}1_{\{H>y\}}\right)\\
&=\overline{N}\left(H>y\right)-\overline{N}\left(H>y\right)\widehat{\mathbb{E}}^{0}_y\left(\exp\{-\lambda L^{y}_{\zeta}\}\right)\\
&=\overline{N}\left(H>y\right)-\overline{N}\left(H>y\right)\widehat{\mathbb{E}}_{0}\left(\exp\{-\lambda L^{0}_{\tau^{-}_{-y}}\}\right)
\end{split}
\end{equation*}
Now, because $L^{0}_{\tau^{-}_{-y}}$ under $\widehat{\mathbb{E}}_{0}$ follows an exponential distribution with parameter $\widehat{N}_0(\tau^{-}_{-y}<\zeta).$ We have also calculated in the proof of Lemma~\ref{ovNx} that 
\begin{equation*}
\begin{split}
\widehat{N}_0(\tau^{-}_{-y}<\zeta)&={N}_0(\tau^{+}_{y}<\zeta)\\
&=N_{0}(\tau^{+}_y<\tau^{-}_0)=\frac{c_{+}}{W(y)},\qquad y>0.
\end{split}
\end{equation*}
The rest of the calculation now follows from the fact that 
$$\overline{N}\left(H>y\right)=\frac{W^{\prime}(y)}{W(y)},\quad y>0,$$ (see \cite[Lemma 8.2]{kyprianou2014fluctuations}).
\end{proof}

\begin{proof}[Proof of Proposition \ref{ovN3}]
Observe that $v_{x,y}(\lambda)=\widehat{N}_x^0\left(1-e^{-\lambda L_\zeta^y} \right)$. Since the excursion does not accumulate local time if the path does not reach $y$, then $$\widehat{N}_x^0\left(1-e^{-\lambda L_\zeta^y} \right)=\widehat{N}_x^0\left[\left(1-e^{-\lambda L_\zeta^y} \right)1_{\{T_y<\zeta \}} \right].$$ From the Markov property at time $T_y$,
\begin{align*}
v_{x,y}&=\widehat{N}_x^0\left[\left(1-e^{-\lambda L_\zeta^y} \right)1_{\{T_y<\zeta \}} \right]=\widehat{N}_x^0\left[1_{\{T_y<\zeta \}} \widehat{\mathbb{E}}_y \left(1-e^{-\lambda L_{T_x}^y} \right) \right] \\
&=\widehat{N}_x^0\left[T_y<\zeta \right] - \widehat{N}_x^0\left[T_y<\zeta \right] \widehat{\mathbb{E}}_0 \left(e^{-\lambda L_{T_{x-y}}^0} \right).
\end{align*}
Because of regularity and the absence of negative jumps, $T_{x-y}=\tau_{x-y}^-$ under $\widehat{\mathbb{E}}_0$. As in the previous proposition, $L_{\tau^-_{x-y}}^0$ follows an exponential distribution of parameter $\widehat{N}_0(\tau_{x-y}^-<\zeta)=N_0(\tau_{y-x}^+<\zeta)=\frac{c_+}{W(y-x)}$, which implies $\widehat{\mathbb{E}}_0 \left(e^{-\lambda L_{T_{x-y}}^0} \right)=\frac{c_+}{c_+ +\lambda W(y-x)}$. Therefore, $$v_{x,y}(\lambda)=\widehat{N}_x^0\left[T_y<\zeta \right] \frac{\lambda W(y-x)}{c_++\lambda W(y-x)}.$$ Now, under $\widehat{N}_x^0$ the event $\{T_y<\zeta,\tau_y^+>\zeta \}$ has zero measure because of regularity of 0 for $(0,\infty)$. Since there are no negative jumps under $\widehat{N}_x^0$, we also have $\{\tau_y^+<\zeta \} \subset \{ T_y<\zeta\}$. Hence, 
\begin{align*}
\widehat{N}_x^0\left[T_y<\zeta \right]&=\widehat{N}_x^0\left[T_y<\zeta,\tau_y^+>\zeta \right]+\widehat{N}_x^0\left[T_y<\zeta,\tau_y^+<\zeta \right]=\widehat{N}_x^0\left[\tau_y^+<\zeta \right]= \widehat{N}_x\left[\tau_y^+<\zeta<\tau_0^- \right]\\
&= \widehat{N}_0\left[\tau_{y-x}^+<\zeta<\tau_{-x}^- \right] \\
&= N_0 \left[\tau_{x-y}^- <\zeta <\tau_x^+ \right].
\end{align*}

To provide an expression of this in terms of $W$, we intersect the event inside $N_0$ with the partition $\mathcal{E}_+$, $\mathcal{E}_-$, $\mathcal{E}_\pm$. Excursions on $\mathcal{E}_+$ are completely above zero and therefore do not reach the negative level $x-y$. On $\mathcal{E}_-$ excursions are completely negative, and hence the condition $\tau_x^+>\zeta$ is fulfilled. Additionally, from Theorem 3 in \cite{pardo2018excursion} we know that $N_0$ on $\mathcal{E}_-$ is a multiple of $\widehat{\overline{N}}$, which is the pushforward of $\overline{N}$ under the map that sends each path to its negative. Therefore,
\begin{align*}
N_0 \left[\tau_{x-y}^- <\zeta <\tau_x^+,\mathcal{E}_- \right]&= N_0 \left[\tau_{x-y}^- <\zeta,\mathcal{E}_- \right] = \frac{c_-\sigma^2}{2} \widehat{\overline{N}} \left[\tau_{x-y}^- <\zeta \right]= \frac{c_-\sigma^2}{2} \overline{N} \left[\tau_{y-x}^+ <\zeta \right] \\
&= \frac{c_-\sigma^2}{2} \overline{N} \left[H>y-x \right] \\
&= \frac{c_-\sigma^2}{2} \frac{W'(y-x)}{W(y-x)},
\end{align*}
where the last line follows from the fact that $W$ is differentiable and $W'(z)=W(z)\overline{N}(H>z)$ (see \cite[Lemma 8.2]{kyprianou2014fluctuations}).

Finally, on $\mathcal{E}_\pm$ excursions start above zero, then jump below and die at the next hitting time at zero. Therefore,
\begin{align*}
N_0 \left[\tau_{x-y}^- <\zeta <\tau_x^+,\mathcal{E}_\pm \right]&= N_0 \left[\tau_{x-y}^- <\zeta <\tau_x^+,0<\tau_0^-<\zeta \right] \\
&= N_0 \left[\sup_{s\in (0,\tau_0^-]} \mathbf{e}(s) <x, \tau_{x-y}^- <\zeta,0<\tau_0^-<\zeta \right].
\end{align*}

From the Markov property at time $\tau_0^-$, the excursion after the jump follows the same law as $X$ started from $\mathbf{e}(\tau_0^-)$ and killed at the first passage time above zero. This fact together with a conditioning on the time the jump occurs implies that the expression on the right hand side of the display above equals
\begin{align*}
& N_0 \left[1_{\left\{\sup_{s\in (0,\tau_0^-]} \mathbf{e}(s) <x \right\}} \mathbb{P}_{\mathbf{e}(\tau_0^-)} \left(\tau_{x-y}^- < \tau_0^+  \right) \right] \\
&= N_0 \left[1_{\left\{\sup_{s\in (0,\tau_0^-]} \mathbf{e}(s) <x \right\}} 1_{\{\mathbf{e}(\tau_0^--) >0, \mathbf{e}(\tau_0^-) <0\}} \mathbb{P}_{\mathbf{e}(\tau_0^-)} \left(\tau_{x-y}^- < \tau_0^+  \right) \right] \\
&= N_0 \left[ \int_0^\infty dt 1_{\left\{\sup_{s\in (0,t]} \mathbf{e}(s) <x \right\}} 1_{\{\mathbf{e}(t-) >0 \}} \int_{-\infty}^{0} \Pi(du) 1_{\{\mathbf{e}(t-)+u<0 \}}\mathbb{P}_{\mathbf{e}(t-)+u} \left(\tau_{x-y}^- < \tau_0^+  \right) \right]\\
&= N_0 \left[ \int_0^\infty dt 1_{\left\{\sup_{s\in (0,t]} \mathbf{e}(s) <x \right\}} 1_{\{\mathbf{e}(t) >0 \}} \int_{-\infty}^{0} \Pi(du) 1_{\{\mathbf{e}(t)+u<0 \}} \mathbb{P}_{\mathbf{e}(t)+u} \left(\tau_{x-y}^- < \tau_0^+  \right) \right] \\
&= \int_0^\infty dt N_0 \left[  1_{\left\{\sup_{s\in (0,t]} \mathbf{e}(s) <x \right\}} 1_{\{\mathbf{e}(t) >0 \}} h(\mathbf{e}(t)) \right],
\end{align*}
where $$h(z)=\int_{-\infty}^{0} \Pi(du) 1_{\{z+u<0 \}} \mathbb{P}_{z+u} \left(\tau_{x-y}^- < \tau_0^+  \right)=\int_{-\infty}^{-z} \Pi(du) \mathbb{P}_{z+u} \left(\tau_{x-y}^- < \tau_0^+  \right), \quad z>0.$$

Observe that if $u\leq -z-(y-x)$ then $\mathbb{P}_{z+u} \left(\tau_{x-y}^- < \tau_0^+  \right)=1$. On the other hand, if $-z-(y-x)<u<-z$ then the latter probability is just the exit problem from the interval $[x-y,0]$ starting from $z+u$, so $\mathbb{P}_{z+u} \left(\tau_{x-y}^- < \tau_0^+  \right)=1-\frac{W(u+z+y-x)}{W(y-x)}$. 

From the same calculation as in the proof of Lemma~\ref{ovNx}, we know that there exists $c_+>0$ such that
\begin{align*}
&N_0 \left[  1_{\left\{\sup_{s\in (0,t]} \mathbf{e}(s) <x \right\}} 1_{\{\mathbf{e}(t) >0 \}} h(\mathbf{e}(t)) \right]\\
&=\int_0^x dz e^{-\Phi(0)z} h(z) - \frac{c_+}{W(x)}\int_0^x dz \left(e^{-\Phi(0)z}W(x)-W(x-z) \right)h(z) \\
&= \int_0^x dz \left( 1-c_+ + \frac{c_+ W(x-z)}{W(x)} \right) h(z) \\
&= \int_0^x dz \left( 1-c_+ + \frac{c_+ W(x-z)}{W(x)} \right) \left(\Pi(-\infty,z)-\int_{-z-(y-x)}^{-z} \Pi(du) \frac{W(u+z+y-x)}{W(y-x)} \right).
\end{align*}

Putting the previous computations together we conclude that
\begin{align*}
\widehat{N}_x^0\left[T_y<\zeta \right]&= \frac{c_-\sigma^2}{2} \frac{W'(y-x)}{W(y-x)} \\
& + \int_0^x dz \left( 1-c_+ + \frac{c_+ W(x-z)}{W(x)} \right) \left(\Pi(-\infty,-z)-\int_{-z-(y-x)}^{-z} \Pi(du) \frac{W(u+z+y-x)}{W(y-x)} \right).
\end{align*}
\end{proof}

\section*{Acknowledgements}
This work is part of the first author's research project in Centro de Investigaci\'on en Matem\'aticas. This author has been supported by the PhD grant (CVU number 706815) from the National Council of Science and Technology (CONACYT). Both authors thank Wei Xu for the several insightful discussions about this work.

\bibliographystyle{abbrv}
\bibliography{rkbib}

\end{document}